\newcommand{\new}{}
\newcommand{\old}{}
\newcommand{\GpqG}{G {_{p}\times_{q}} G}
\newcommand{\GstG}{G {_{s}\times_{t}} G}
\newcommand{\GssG}{G {_{s}\times_{s}} G}
\newcommand{\GttG}{G {_{t}\times_{t}} G}
\newcommand{\sindex}[3]{}
\numberwithin{equation}{section}
\theoremstyle{definition} 
\newtheorem{remark}{Remark}[section]
\newtheorem{remarks}[remark]{Remarks}
\newtheorem{notation}[remark]{Notation}
\newtheorem*{notation*}{Notation}
\theoremstyle{plain}
\newtheorem{definition}[remark]{Definition}
\newtheorem{theorem}[remark]{Theorem}
\newtheorem{proposition}[remark]{Proposition}
\newtheorem{corollary}[remark]{Corollary}
\newtheorem{lemma}[remark]{Lemma}
\newtheorem*{assumption*}{Assumption}
\newcommand{\C}{\mathds{C}}
\newcommand{\gr}[5]{\tensor*[^{#2}_{#3}]{#1}{^{#4}_{#5}}}
\newcommand{\Deltalt}{\Delta_{C}}
\newcommand{\Deltart}{\Delta_{B}}
\newcommand{\id}{\iota}
\DeclareMathOperator{\Hom}{Hom}
\DeclareMathOperator{\End}{End}
\DeclareMathOperator{\lspan}{span}
\newcommand{\bB}{{_{B}B}}
\newcommand{\Bb}{B_{B}}
\newcommand{\cC}{{_{C}C}}
\newcommand{\Cc}{C_{C}}
\newcommand{\bsA}{{_{B}A}}
\newcommand{\btA}{A^{B}}
\newcommand{\bAs}{A_{B}}
\newcommand{\bAt}{{^{B}A}}
\newcommand{\csA}{{_{C}A}}
\newcommand{\ctA}{A^{C}}
\newcommand{\cAs}{A_{C}}
\newcommand{\cAt}{{^{C}A}}
\newcommand{\AltkA}{\ctA \overline{\times} \csA} 
\newcommand{\ArtkA}{\bAs \overline{\times} \bAt} 
\newcommand{\oo}{\otimes}
\newcommand{\AlA}{\ctA \otimes \csA}
\newcommand{\ArA}{\bAs \otimes \bAt}
\newcommand{\ATAsA}{\btA \otimes {^{B}A_{B}} \otimes
{_{B}A}}
\newcommand{\ATAlA}{\btA \otimes {^{B}A^{C}} \otimes \csA}
\newcommand{\AlAsA}{\ctA \otimes \csA_{B} \otimes \bsA}
\newcommand{\AlAlA}{\ctA \otimes  \csA^{C} \otimes \csA}
\newcommand{\ArArA}{\bAs \oo {^{B}A_{B}} \oo \bAt}
\newcommand{\bA}{{_{B}A}}
\newcommand{\Ab}{A_{B}}
\newcommand{\cA}{{_{C}A}}
\newcommand{\Ac}{A_{C}}
\newcommand{\sbA}{A^{B}}
\newcommand{\Asb}{\gr{A}{B}{}{}{}}
\newcommand{\scA}{A^{C}}
\newcommand{\Asc}{\gr{A}{C}{}{}{}}
\newcommand{\AbA}{\Ab \oo \bA}
\newcommand{\AcA}{\Ac \oo \cA}
\newcommand{\AcAbA}{\Ac \oo \cA_{B} \oo \bA}
\newcommand{\ArAbA}{\Ab \oo \Asb_{B} \oo \bA}
\newcommand{\ArAlA}{\bAs \oo {^{B}A^{C}} \oo \csA}
\newcommand{\AlAbA}{\scA \oo \cA_{B} \oo \bA}
\newcommand{\AlArA}{\ctA \otimes {_{C}A_{B}} \oo \bAt}
\newcommand{\op}{\mathrm{op}}
\newcommand{\co}{\mathrm{co}}
\newcommand{\Tl}{T_{\lambda}}
\newcommand{\Tr}{T_{\rho}}
\newcommand{\lT}{{_{\lambda}T}}
\newcommand{\rT}{{_{\rho}T}}
\newcommand{\Sl}{S_{\lambda}}
\newcommand{\Sr}{S_{\rho}}
\newcommand{\lS}{{_{\lambda}S}}
\newcommand{\epsh}{\varepsilon_{H}}
\newcommand{\epslt}{{_{C}\varepsilon}}
\newcommand{\epsrt}{\varepsilon_{B}}
\newcommand{\Rl}{\Tl^{-1}}
\newcommand{\lR}{\lT^{-1}}
\newcommand{\Rr}{\Tr^{-1}}
\newcommand{\mult}{m}
\newcommand{\es}{\varepsilon_{s}} 
\newcommand{\et}{\varepsilon_{t}} 
\newcommand{\Es}{E_{s}} 
\newcommand{\Et}{E_{t}} 
\newcommand{\actleft}{\triangleleft}
\newcommand{\actright}{\triangleright}
\title[Multiplier Hopf
  algebroids.]{Multiplier Hopf
  algebroids. Basic theory and examples.}
\author{Thomas Timmermann} \address{FB Mathematik und Informatik, University of Muenster
  \\ Einsteinstr.\ 62, 48149 Muenster, Germany} \email{timmermt@math.uni-muenster.de}
\author{Alfons Van Daele} \address{Department of Mathematics, University of Leuven,
  Celestijnenlaan 200B, B-3001 Heverlee, Belgium} \email{Alfons.VanDaele@wis.kuleuven.be}
\thanks{Supported by the SFB 878 ``Groups, geometry and actions'' funded by the DFG}
\date{\today}
\subjclass[2010]{16T05}
\keywords{bialgebroid, Hopf algebroid, weak Hopf algebra, quantum groupoid}
\begin{document}


\begin{abstract} Multiplier Hopf algebroids are algebraic versions of quantum groupoids that generalize Hopf algebroids to the non-unital case and weak (multiplier) Hopf algebras to non-separable base algebras.  The main structure maps of a multiplier Hopf algebroid are a left and a right comultiplication. \new We show that bijectivity of two associated canonical maps is equivalent to the existence of an  antipode,  discuss invertibility of the antipode, and  present some examples and special cases. \old
\end{abstract}

\maketitle

\tableofcontents

\section{Introduction}

\label{section:introduction}

 Quantum groupoids have appeared  in a variety of guises and mathematical contexts, for example, as generalized Galois symmetries for depth 2 inclusions of factors or algebras \cite{boehm:extensions}, \cite{enock:inclusions3}, \cite{enock:actions}, \cite{kadison:extensions}, \cite{kadison:inclusions}, \cite{nikshych:inclusions}, as dynamical quantum groups in connection with solutions to the quantum dynamical Yang-Baxter equation \cite{donin}, \cite{etingof:qdybe}, \cite{koelink:su2}, or as Tannaka-Krein duals of certain tensor categories of bimodules \cite{hai:tannaka}, \cite{mccurdy:tannaka}, \cite{pfeiffer:tannaka}. Common to all approaches are the basic constituents of a quantum groupoid --- a pair of anti-isomorphic algebras $B$ and $C$ with homomorphisms into an algebra $A$ together with a comultiplication on $A$ that takes values in a certain fiber product $A \ast A$ involving $B$ and $C$. These ingredients are, in a sense, dual to the constituents of a groupoid, and satisfy corresponding conditions like co-associativity of the comultiplication.

In this article, we extend the existing algebraic approaches to quantum groupoids via Hopf algebroids \cite{boehm:hopf}, \cite{boehm:bijective}, \cite{lu:hopf,xu}, weak Hopf algebras \cite{boehm:weak1}, \cite{vainer}, \cite{schauenburg:comparison} and weak multiplier Hopf algebras \cite{boehm:weak}, \cite{daele:weakmult0}, \cite{daele:weakmult} by considering so-called \new multiplier Hopf algebroids\old, where the underlying algebras are no longer assumed to be unital.

The motivation to study multiplier versions of Hopf algebroids and weak Hopf algebras is two-fold. First, there are natural examples which exhibit all features of a quantum groupoid except that the underlying algebras are not unital and can not be made unital in a natural way, like algebras of functions on non-compact groupoids.  Second, such examples appear as generalized Pontryagin duals of unital Hopf algebroids or weak Hopf algebras, and as in the case of Hopf algebras, one has to pass to a multiplier version to obtain a good duality theory beyond finite-dimensional cases \cite{daele:1}. In \cite{MR3607289}, we show that the multiplier Hopf algebroids introduced in this article provide a good algebraic setting for a generalised Pontryagin duality theory for quantum groupoids.

The theory of (multiplier) Hopf algebroids and the theory of weak (multiplier) Hopf algebras differ mainly in the target of the comultiplication and both have their advantages and draw-backs.  Weak (multiplier) Hopf algebras may be easier to work with, but their base algebras are automatically separable and, in particular, semi-simple; see Proposition 2.11 \cite{boehm:weak1}.  Multiplier Hopf algebroids overcome this restriction and are not only more general, but also, in a sense, more natural than weak multiplier Hopf algebras.  They may, however, appear more difficult because they involve two versions of the comultiplication simultaneously, as will be explained below. In the finite-dimensional case, both approaches are equivalent \cite{nikshych:algversions,schauenburg:comparison}. In \cite{daele:relation} and \cite{daele:modified}, we show that every regular weak multiplier Hopf algebra gives rise to a regular multiplier Hopf algebroid, but that even in the case where the base algebras are separable, the converse is not true.

\new

\medskip

Let us explain the main result of this article in some more detail. \old

Similarly like a bialgebroid, a \emph{multiplier bialgebroid} is given by a total algebra $A$, two base algebras $B,C$ with anti-isomorphisms $B\rightleftarrows C$, and a left and a right comultiplication $\Deltalt$ and $\Deltart$, respectively, related by a mixed co-associativity condition.  In the unital case, these comultiplications take values in the left and the right Takeuchi product, respectively.  In the non-unital case, the latter have to be replaced by certain left or right multiplier algebras such that all products of the form
   \begin{align*} \Deltalt(b)(a \otimes 1), && \Deltalt(a)(1 \otimes
b), && (a \otimes 1)\Deltart(b), && (1 \otimes b)\Deltart(a),
  \end{align*}
  where $a,b\in A$, make sense as elements of certain tensor products of $A$ with itself relative to $B$ or $C$, respectively.  \new The definition of a left and a right counit then   carries over from the unital case.\old

  The main result of this article is \new  that a multiplier bialgebroid with a left and a right counit has an antipode  if and only if the canonical maps
    \begin{align} \label{eq:intro-lttr} 
  \lT \colon & a \otimes b
\mapsto (a \otimes 1)\Deltart(b), & \Tr
\colon & a \otimes b \mapsto \Deltalt(a)(1 \otimes b),
    \end{align}
    are bijective, where the ranges and domains are various tensor products of $A$ with itself relative to $B$ or $C$, respectively.  In that case, we call the multiplier bialgebroid a \emph{multiplier Hopf algebroid}. Its antipode is invertible if and only if the canonical maps
    \begin{align} \label{eq:intro-rttl}  \rT\colon & a \otimes b \mapsto (1 \otimes
b)\Deltart(a), & \Tl\colon & a \otimes b \mapsto \Deltalt(b)(a \otimes 1),
    \end{align}
are bijective as well, and in that case, we call the multiplier Hopf algebroid \emph{regular}.
\old
This result generalizes corresponding characterizations of multiplier Hopf algebras and Hopf
algebroids among multiplier bialgebras or bialgebroids; see \cite{daele:0} and Proposition 4.2 in
\cite{boehm:bijective}.  \new In the case of multiplier Hopf algebras,   bijectivity of the maps $\lT$ and $\Tr$ implies existence of a counit. In the case of multiplier bialgebroids, we can only prove existence of counits if  the maps $\rT$ and $\Tl$ are bijective as well. \old

The proof of the main result   uses only the canonical maps \new in \eqref{eq:intro-lttr}  and \eqref{eq:intro-rttl}  \old and a few key relations between them that are equivalent to multiplicativity, co-associativity and compatibility of the comultiplications $\Deltalt$ and $\Deltart$. To a large extent, we adopt and refine the arguments in \cite{daele:0}, but replace calculations involving the comultiplications by transparent commutative diagrams. This change of technique proves to be very helpful for keeping track of the module structures used for these tensor products and for ensuring that all maps involved are well-defined.  More importantly, this method makes explicit the key relations of the maps $\Tl,\Tr$ and $\lT,\rT$ used in the arguments and suggests to shift the perspective and to regard these canonical maps as the fundamental structure maps of a multiplier bialgebroid.

\medskip

This article is organized as follows.

In \S\ref{section:left}, we introduce multiplier analogues of left bialgebroids, which are given by algebras $A$ and $C$ with a homomorphism $s\colon C\to M(A)$, an anti-homomorphism $t\colon C\to M(A)$, and a left-sided comultiplication $\Deltalt$ from $A$ into a multiplier version of the Takeuchi product. The map $\Deltalt$ and its defining properties are described in terms of the canonical maps \new $\Tl$ and $\Tr$, see \eqref{eq:intro-lttr} and \eqref{eq:intro-rttl}, \old and various commutative diagrams, which will be used extensively later on. The notation used for these diagrams is explained in \ref{notation:left}.

In \S\ref{section:left-counits}, we introduce counits of left multiplier bialgebroids, and prove
uniqueness and existence in the case where the canonical maps are surjective or bijective, respectively.  In contrast to the unital case, we do not include existence of a
counit in the definition of a left multiplier bialgebroid, but consider them as additional
structure.

In \S\ref{section:right}, we turn to right multiplier bialgebroids and briefly summarize
the right-handed analogues of the left-handed concepts and results of
\S\ref{section:left} and \S\ref{section:left-counits}.

In \S\ref{section:hopf}, we come to the main result of this article, which is the definition and characterization of multiplier Hopf algebroids. We first formulate the necessary compatibility relation for a left and a right multiplier bialgebroid to form a two-sided multiplier bialgebroid and then \new show that existence of an antipode is equivalent to bijectivity of the canonical maps \eqref{eq:intro-lttr}. \old Along the way, we obtain many useful relations for the canonical maps and describe their inverses in terms of the antipode.

\new
In \S\ref{section:regular}, we show that the antipode is invertible if and only if the maps \eqref{eq:intro-rttl} are invertible as well, and derive further relations between the antipode and the canonical maps which hold in this case.

In \S\ref{section:examples}, we present several special cases and examples, including multiplier Hopf algebroids arising from weak multiplier Hopf algebras, multiplier Hopf $*$-algebroids, the function algebras and convolution algebras of \'etale groupoids, two-sided crossed products which generalize constructions in \cite{boehm:hopf}, \cite{vainer} and \cite{daele:weakmult2}, and proper, co-commutative and \'etale multiplier Hopf algebroids.
\old 

\medskip

We use the following conventions and terminology.

The identity map on a set $X$ will be denoted by $\iota_{X}$ or simply $\id$.  All algebras and modules will be complex vector spaces and all morphisms will be linear maps, but much of the theory developed in this article should apply in wider generality.

We denote the linear span of a subset $X$ of  a vector space $V$  by $\lspan X$.

Let $B$ be an algebra, not necessarily unital. We denote by $B^{\op}$\sindex{Aaop}{$A^{\op},B^{\op}$}{opposite algebra} the \emph{opposite algebra}, which has the same underlying vector space as $B$ but the reversed multiplication. When necessary, we write $b^{\op}$ when we regard an element $b\in B$ as an element of $B^{\op}$ to avoid confusion.

Given a right module $M$ over $B$, we write $M_{B}$ if we want to emphasize that $M$ is regarded as a right $B$-module. We call $M_{B}$ \emph{faithful} if for each non-zero $b\in B$ there exists an $m\in M$ such that $mb$ is non-zero, \emph{non-degenerate} if for each non-zero $m\in M$ there exists a $b \in B$ such that $mb$ is non-zero, \emph{idempotent} if $MB=M$, and we say that $M_{B}$ \emph{has local units in $B$} if for every finite subset $F\subset M$ there exists a $b\in B$ with $mb=m$ for all $m\in F$. Note that the last property implies the preceding two.

For left modules, we obtain the corresponding notation and terminology by identifying left $B$-modules with right $B^{\op}$-modules.

We write $B_{B}$ or ${_{B}B}$ when we regard $B$ as a right or left module over itself with respect to right or left multiplication. We say that the algebra $B$ is \emph{non-degenerate}, \emph{idempotent}, or \emph{has local units} if the modules ${_{B}B}$ and $B_{B}$ both are non-degenerate, idempotent or both have local units in $B$, respectively. Note that the last property again implies the preceding two.

\new
Working with non-unital algebras, we frequently need to use multipliers.

A \emph{left multiplier} of the algebra $B$ is a linear map $T \colon B \to B$ satisfying $T(bb')=T(b)b'$ for all $b,b'\in B$, that is, an endomorphism of the right $B$-module $B_{B}$.  We denote by $L(B):=\End(B_{B})$\sindex{L(B)}{$L(B)$}{left multiplier algebra} the algebra of all left multipliers of $B$.

A \emph{right multiplier} of the algebra $B$ is an endomorphism of the left $B$-module $_{B}B$.  When we think of such an endomorphism $T$ as a right multiplier, we write $bT$ instead of $T(b)$ for the image of $b\in B$ under $T$. We denote by $R(B):=\End({_{B}B})^{\op}$\sindex{R(B)}{$R(B)$}{right multiplier algebra} the algebra of right multipliers of $B$, so that $b(TS)=(bT)S$ for all $b\in B$ and $T,S\in R(B)$.

Note that $B_{B}$ or ${_{B}B}$ is non-degenerate if and only if the natural map from $B$ to $L(B)$ or $R(B)$, respectively, is injective.

Suppose that $B$ is non-degenerate. Then we define a \emph{multiplier} of $B$ to be a pair $T=(T_{l},T_{r})$ of maps $T_{l},T_{r} \colon B\to B$ satisfying $bT_{l}(b') = T_{r}(b)b'$ for all $b,b' \in B$. We write $Tb:=T_{l}(b)$ and $bT:=T_{r}(b)$ for all $b\in B$, so that the preceding equation takes the form $b(Tb')=(bT)b'$ for all $b,b' \in B$.  All multipliers form an algebra $M(B)$ with respect to the obvious addition and the multiplication given by $(T_{l},T_{r}) \circ (S_{l},S_{r}) = (T_{l} \circ S_{l},S_{r} \circ T_{r})$, that is, $(TS)b = T(Sb)$ and $b(TS)=(bT)S$ for all $b\in B$. A multiplier $T=(T_{l},T_{r})$ is uniquely determined by the components $T_{l}$ and $T_{r}$, which are a left and a right multiplier of $B$, respectively, so that $M(B)$ can be identified with subalgebras of $L(B)$ and $R(B)$.

More generally, if $B_{B}$ is non-degenerate, we define the multiplier algebra of $B$ to be the subalgebra $M(B) :=\{ T\in L(B) : BT\subseteq B\} \subseteq L(B)$,\sindex{M(B)}{M(B)}{multiplier algebra} where we identify $B$ with its image in $L(B)$. Likewise we define $M(B) =\{ T\in R(B) : BT \subseteq B\}$\sindex{M(B)}{M(B)}{multiplier algebra} if ${_{B}B}$ is non-degenerate, and both definitions coincide with the preceding one if $B_{B}$ and $_{B}B$ are non-degenerate.

\section{Left multiplier bialgebroids}
\label{section:left}

Let $A$ be an algebra, not necessarily unital.  Regard $A$ as a right module over itself via right multiplication, and denote this module by $A_{A}$.  We impose the following assumption:
\begin{itemize}
\item[(A1)] \emph{The right module $A_{A}$ is idempotent and non-degenerate.}
\end{itemize}
Then $A$ embeds naturally into the algebra $L(A)=\End(A_{A})$ of left multipliers and we can form the multiplier algebra $M(A) \subseteq L(A)$.  If $A$ has a unit $1_{A}$, then the map $T\mapsto T1_{A}$ identifies $M(A)=L(A)$ with $A$ as an algebra. We denote elements of $A$ by $a,a',b,b',\ldots$

Let $C$ be an algebra, not necessarily unital, with a homomorphism $ s\colon C \to M(A)$ and an anti-homomorphism $t\colon C \to M(A)$ such that $s(C)$ and $t(C)$ commute.  We denote elements of $C$ by $x,x',y,y',\ldots$ We write $\csA$ and $\ctA$ when we regard $A$ as a left or right $C$-module via left multiplication along $s$ or $t$, respectively, that is, $x\cdot a = s(x)a$ and $a\cdot x = t(x)a$. Similarly, we write $\cAs$ and $\cAt$ when we regard $A$ as a right or left $C$-module via right multiplication along $s$ or $t$, respectively.  We make the following assumption:
\begin{itemize}
\item[{(A2)}] \emph{The modules $\csA$ and $\ctA$ are faithful and idempotent.}
\end{itemize}
This condition means that the maps $s$ and $t$ are injective and $s(C)A = A = t(C)A$.  Note that then $C$ is non-degenerate as an algebra. Indeed, if $xC=0$, then $s(x)A=s(x)s(C)A=0$ and hence $x=0$, and if $Cx=0$, then $t(x)A=t(x)t(C)A=t(Cx)A = 0$ and hence $x=0$ again.

We next form the tensor product $\AlA$ of $C$-modules and regard it as a right module over $A\otimes 1$ or $1 \otimes A$ in the obvious way.  We would like the following condition to hold:
\begin{itemize}
\item[{(A3)}] \emph{The space $\AlA$ is non-degenerate as a right module over $A
  \otimes 1$ and  over $1 \otimes A$.}
\end{itemize}

We next list several cases in which this assumption is satisfied, and use the following terminology.  We call a multiplier $E\in M(C^{\op}\otimes C)$ a \emph{left separability multiplier} if for every element $x\in C$, we have
\begin{align*}
E(x^{\op} \otimes 1) = E(1 \otimes
    x) \in C^{\op} \otimes C,
\end{align*}
and the linear map $y^{\op} \otimes z \mapsto yz$ sends this element above to $x$; see also \new \cite[\S 1]{daele:separability}. \old An algebra $D$ is \emph{firm} if the multiplication map $D \underset{D}{\otimes} D \to D$ is an isomorphism, and a module $M$ over an algebra $D$ is \emph{locally projective} if for every finite subset $F \subseteq M$, there exist finitely many morphisms $\upsilon_{i} \in \Hom(M,D)$ and $m_{i} \in \Hom(D,M)$, where $D$ is regarded as a $D$-module in the obvious way, such that
\begin{align*}
  \sum_{i} m_{i}(\upsilon_{i}(m)) = m \quad \text{for all } m\in F;
\end{align*}
see \new \cite[Theorem 2.1]{zimmermann-huisgen}.  \old In this case, $M$ is also \emph{universally torsionless} and a \emph{trace module}, see \new \cite[Theorem 3.2]{garfinkel}\old.  The module $M$ is \emph{projective} if there exist $\upsilon_{i}$ and $m_{i}$ as above, but possibly infinitely many, such that the sum above is finite and equal to $m$ for every $m\in M$.
\begin{lemma} \label{lemma:non-degenerate} 
Assume that (A1), (A2) and  one of the following conditions holds:
  \begin{enumerate}
  \item The right module $A_{A}$ has local units in $A$.
  \item There exists a left separability multiplier $E \in M(C^{\op}\otimes C)$.      
  \item The algebra $C$ is firm and the $C$-modules $\csA$ and $\ctA$
    are locally projective.
  \end{enumerate}
Then condition (A3) is satisfied.
\end{lemma}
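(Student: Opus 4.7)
The objective is to show that $\bsA \otimes \btA$ is non-degenerate as a right module both over $A \otimes 1$ and over $1 \otimes A$. Because interchanging the roles of $s,t$ and of the two tensor factors leaves hypotheses (A1), (A2) and each of (1)--(3) invariant, it suffices to establish the $A \otimes 1$-side: given $\xi \in \bsA \otimes \btA$ with $\xi \cdot (a \otimes 1) = 0$ for every $a \in A$, conclude $\xi = 0$. In each case I plan to exhibit a detection device tied to the right $A \otimes 1$-action.

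Case (1) is essentially immediate. Fix any representation $\xi = \sum_j a_j \otimes b_j$ and pick $e \in A$ with $a_j e = a_j$ for every $j$; then $\xi = \xi \cdot (e \otimes 1) = 0$ by hypothesis.

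For case (2) the plan is to use $E$ to build a right $A \otimes 1$-equivariant section $\sigma \colon \bsA \otimes \btA \to A \otimes A$ of the natural quotient. Extending $s \otimes t$ to a homomorphism $M(B \otimes B^{\op}) \to M(A \otimes A)$, the multiplier $\varepsilon := (s \otimes t)(E)$ acts on $A \otimes A$ as a left multiplier, and the centrality property $E(x \otimes 1) = E(1 \otimes x^{\op})$ translates into the identity $s(x) a \otimes b = a \otimes t(x) b$ being respected by left multiplication by $\varepsilon$. One then verifies that the assignment $\sigma(\xi) := \varepsilon \cdot \tilde\xi$, for any lift $\tilde\xi \in A \otimes A$ of $\xi$, depends only on $\xi$ and commutes with right multiplication by $a \otimes 1$. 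The hypothesis forces $\sigma(\xi) \cdot (a \otimes 1) = 0$ for all $a$, whence non-degeneracy of $A_A$ from (A1) gives $\sigma(\xi) = 0$, and so $\xi = 0$.

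For case (3) the plan is to apply local projectivity of $\btA$ to the finite set $\{b_j\}$ arising from any representation $\xi = \sum_j a_j \otimes b_j$, obtaining finitely many right $B$-linear maps $\mu_i \colon \btA \to B$ and $n_i \colon B \to \btA$ with $\sum_i n_i(\mu_i(b_j)) = b_j$ for all $j$. Right $B$-linearity of $\mu_i$, namely $\mu_i(t(y)b) = \mu_i(b) y$, combined with the tensor relation $s(y) a \otimes b = a \otimes t(y) b$ makes
\[
\pi_i \colon \bsA \otimes \btA \to \bsA, \qquad \sum_j a_j \otimes b_j \mapsto \sum_j s(\mu_i(b_j)) a_j,
\]
well defined and right $A$-equivariant on the first factor. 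The hypothesis then gives $\pi_i(\xi) \cdot a = 0$ for every $a \in A$, and (A1) forces $\pi_i(\xi) = 0$. Firmness of $B$ together with the flatness of $\bsA$ entailed by its local projectivity upgrades the canonical map $\bsA \otimes_{B^{\op}} B \to \bsA$ sending $a \otimes x$ to $s(x) a$ to an isomorphism, so vanishing of each $\pi_i(\xi)$ implies $(\id \otimes \mu_i)(\xi) = 0$ in $\bsA \otimes_{B^{\op}} B$; applying $\id \otimes n_i$ and summing gives $\xi = \sum_i (\id \otimes n_i \mu_i)(\xi) = 0$.

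The main obstacle is case (3): establishing the firmness-type isomorphism $\bsA \otimes_{B^{\op}} B \cong \bsA$ from firmness of $B$, idempotency of $\bsA$ from (A2), and local projectivity of $\bsA$ requires a careful appeal to standard results on firm and locally projective modules from \cite{zimmermann-huisgen,garfinkel}. By comparison, case (2) is mostly bookkeeping once the extension of $s \otimes t$ to multiplier algebras is set up and the centrality of $E$ is correctly exploited, and case (1) is essentially free.
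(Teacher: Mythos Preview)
Your proposal is correct and follows essentially the same approach as the paper in all three cases: case (1) is identical, and in case (2) you construct the same section via left multiplication by $(s\otimes t)(E)$ that the paper denotes $j$. The only notable difference is in case (3): the paper applies local dual bases for \emph{both} $\bsA$ and $\btA$ simultaneously to land directly in $B \underset{B}{\otimes} B$ and then invokes firmness of $B$ once, whereas you slice first with the dual basis for $\btA$ and then appeal to the intermediate statement that the action map $\bsA \otimes_{B^{\op}} B \to \bsA$ is an isomorphism; unpacking that statement (which needs local projectivity of $\bsA$ rather than mere flatness) recovers exactly the paper's direct argument, so the paper's route is slightly shorter but the content is the same.
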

\begin{proof}
  (1) Straightforward.

  (2) The assumptions on $E$ imply that the map $j\colon \AlA \to A\otimes A$
  given by $a\otimes b\mapsto (t\otimes s)(E)(a \otimes b)$ is well-defined and that the
  canonical map $A \otimes A \to \AlA$ is a left inverse to $j$. Since $A \otimes A$ is
  non-degenerate as a right module over $A\otimes 1$ and over $1\otimes A$, so is the image $j(\AlA)$ and hence also $ \AlA$.

  (3)  Let $w=\sum_{k} a_{k} \otimes b_{k} \in \AlA$ and assume $w(c\otimes 1)=0$ for all
  $c\in A$.  Choose $\upsilon_{i} \in
    \Hom(\ctA,\Cc)$, $e_{i} \in \Hom(\Cc,\ctA)$ and $ f_{j} \in \Hom(\csA,\cC)$,
    $\omega_{j} \in \Hom(\cC,\csA)$ such that 
 $\sum_{i}e_{i}(\upsilon_{i}(a_{k})) =a_{k}$ and $\sum_{j} f_{j}(\omega_{j}(b_{k}))=b_{k}$
  for all $k$.  Fix $i$ and $j$. Then $\sum_{k}  t(\omega_{j}(b_{k}))a_{k}c=0$ for all $c\in A$, whence
  $\sum_{k} t(\omega_{j}(b_{k}))a_{k} = 0$  by (A1) and hence $\sum_{k}
  \upsilon_{i}(a_{k}) \omega_{j}(b_{k})= 0$. Since $C$ is firm, we can
  conclude $\sum_{k} \upsilon_{i}(a_{k})\otimes \omega_{j}(b_{k})  =0$
  in $C \underset{C}{\otimes} C$.  We apply $e_{i} \otimes f_{j}$, sum
  over $i$ and $j$, and get $w=0$. Therefore, $\AlA$ is
  non-degenerate as a right module over $A\otimes 1$.  A similar
  argument shows  that it is  non-degenerate over $1\otimes A$ as well.
\end{proof}

Let $A,C$ be algebras and $s,t\colon C \to M(A) \subseteq L(A)$ be
maps with commuting images such that (A1)--(A3) hold.   
\begin{remark}
  Before we proceed, let us note that we choose a slightly different notation than in
\cite{daele:relation}, where roles of $s$ and $t$ are switched and $C$ is implicitly
  replaced \new by \old $B=C^{\op}$. With the  present choice,  the space $\AlA$, which carries the target of the comultiplication, is a balanced tensor product of a right module with a left module, whereas in \cite{daele:relation} it was the balanced tensor product of a  left with a right module which  may lead to some confusion. 
\end{remark}

 The \new  left comultiplication on $A$ takes values in \old the subspace
\begin{align*} 
\AltkA \subseteq \End(\AlA)
\end{align*} 
 formed by all endomorphisms $T$ of $\AlA$ satisfying
the following condition: 
\begin{quote}
  \emph{ For every $a,b \in A$, there exist elements
  \begin{align*} T(a \otimes 1) \in \AlA \quad \text{and}
    \quad T(1 \otimes b) \in \AlA
  \end{align*}
  such that $T(a \otimes b) = (T(a \otimes 1))(1 \otimes b) = (T(1
    \otimes b))(a \otimes 1)$.}
\end{quote}
This subspace is a subalgebra and commutes with the right $A\otimes
A$-module action. Note that the elements $T(a\otimes 1)$ and $T(1
\otimes b)$ are uniquely determined thanks to the non-degeneracy
assumption on $\AlA$.

If $A$ has a unit $1_{A}$, then the map $\AltkA \to \AlA$ given by $T \mapsto T(1_{A} \otimes 1_{A})$ identifies $\AltkA$ with the left Takeuchi product, which is the algebra
\begin{align} \label{eq:left-takeuchi}
\new \scA \times \cA  = \old  \left\{ w \in \AlA :
w(t(x) \otimes 1) =  w(1\otimes s(x)) \text{ for all } x\in
C\right\} \subseteq \AlA.
\end{align}
\begin{lemma}
Let $\Delta \colon A \to \AltkA$ be a linear map. Then the linear maps 
\begin{align*}
   \widetilde{T_{\lambda}},\widetilde{T_{\rho}} \colon A \otimes A \to \AlA
\end{align*}
 given by
  \begin{align} \label{eq:delta-tltr} \widetilde{T_{\lambda}}(a\otimes b) &= \Delta(b)(a\otimes 1), &
    \widetilde{T_{\rho}}(a \otimes b) &= \Delta(a)(1 \otimes b),
  \end{align}
for all $a,b\in A$ satisfy
  \begin{align} \label{eq:tltr-welldefined} \widetilde{T_{\lambda}}(t(x)a\otimes b) &=
    \widetilde{T_{\lambda}}(a \otimes b)(1 \otimes s(x)), & \widetilde{T_{\rho}}(a\otimes s(y)b) &=
    \widetilde{T_{\rho}}(a \otimes b)(t(y) \otimes 1)
  \end{align}
  for all $a,b\in A$ and $x,y\in B$ and make the following diagrams commute,
  \begin{gather} \label{dg:tltr-compatible} \xymatrix@R=0pt@C=40pt{ & \AlA \otimes A
      \ar[rd]^{\id \otimes m} & \\ A \otimes A \otimes A \ar[ru]^{\widetilde{T_{\lambda}} \otimes \id}
      \ar[rd]_{\id \otimes \widetilde{T_{\rho}}} & & \AlA, \\ & A \otimes \AlA
      \ar[ru]_{m^{\op} \otimes \id} }
\end{gather}
\begin{gather}\label{dg:tltr-module} \xymatrix@C=25pt@R=15pt{A \otimes A
      \otimes A \ar[r]^(0.45){\id \otimes \widetilde{T_{\lambda}}} \ar[d]_{m^{\op} \oo \id} & A \otimes
      \AlA \ar[d]^{m^{\op} \otimes \id} & A \otimes A \otimes A \ar[r]^(0.45){\widetilde{T_\rho} \otimes
        \id} \ar[d]_{\id \otimes m}
      & \AlA \otimes A \ar[d]^{\id \otimes m} \\
      A\otimes A \ar[r]^{\widetilde{T_{\lambda}}} & \AlA, & A\otimes A \ar[r]^{\widetilde{T_\rho}} & \AlA, }
  \end{gather}
  where $m\colon A\otimes A \to A$ denotes the multiplication, $m^{\op} \colon A\otimes A \to A$ the
  opposite multiplication, and tensor products over $\C$ and over $C$ appear side by side.

  Conversely, every pair of linear maps $(\widetilde{T_{\lambda}},\widetilde{T_{\rho}})$ which make diagram
  \eqref{dg:tltr-compatible} commute and satisfy \eqref{eq:tltr-welldefined} determines a linear
  map $\Delta\colon A \to \AltkA$ by \eqref{eq:delta-tltr}, and each of the maps
  $\Delta,\widetilde{T_{\lambda}},\widetilde{\Tr}$ determines the other two.
\end{lemma}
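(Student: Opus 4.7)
The plan is to handle the two directions separately, exploiting in both the key uniqueness of the witness elements $T(a\otimes 1)$ and $T(1\otimes b)$ which is built into the definition of $\AltkA$ via the non-degeneracy assumption (A3). For the forward direction, given $\Delta\colon A\to \AltkA$, I would define $\widetilde{T_{\lambda}}$ and $\widetilde{T_{\rho}}$ by \eqref{eq:delta-tltr} and verify \eqref{eq:tltr-welldefined}, \eqref{dg:tltr-compatible}, and \eqref{dg:tltr-module}. The balancing identity \eqref{eq:tltr-welldefined} should follow by right-multiplying each side by an arbitrary $1\otimes c$, simplifying using the defining $\AltkA$-condition and the $B^{\op}$-balancing $s(x)a\otimes c = a\otimes t(x)c$ in $\bsA\otimes\btA$, and then invoking uniqueness from (A3). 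Diagram \eqref{dg:tltr-compatible} is essentially a restatement of the defining identity $T(a\otimes 1)(1\otimes c)=T(1\otimes c)(a\otimes 1)$ applied to $T=\Delta(b)$, while each half of \eqref{dg:tltr-module} will follow from the fact that elements of $\AltkA$ commute with the $A\otimes 1$ and $1\otimes A$ actions, a direct consequence of the defining condition together with the non-degeneracy of $\bsA\otimes\btA$.

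Conversely, starting from $(\widetilde{T_{\lambda}},\widetilde{T_{\rho}})$ satisfying \eqref{eq:tltr-welldefined} and \eqref{dg:tltr-compatible}, I would define $\Delta(b)\colon \bsA\otimes\btA\to \bsA\otimes\btA$ on elementary tensors by $\Delta(b)(a\otimes c) := \widetilde{T_{\lambda}}(a\otimes b)(1\otimes c)$ and extend linearly. The hypothesis \eqref{eq:tltr-welldefined} then shows that the balancing relation $s(x)a\otimes c - a\otimes t(x)c$ is sent to zero, so $\Delta(b)$ descends to a well-defined endomorphism of the $B^{\op}$-balanced tensor product. To see that $\Delta(b)\in \AltkA$, I would designate the witnesses as $\Delta(b)(a\otimes 1):=\widetilde{T_{\lambda}}(a\otimes b)$ and $\Delta(b)(1\otimes c):=\widetilde{T_{\rho}}(b\otimes c)$: the first identity $\Delta(b)(a\otimes c)=\Delta(b)(a\otimes 1)(1\otimes c)$ then holds by construction, and the second, $\Delta(b)(a\otimes c)=\Delta(b)(1\otimes c)(a\otimes 1)$, is precisely the commutativity of \eqref{dg:tltr-compatible}. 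Linearity of $\Delta$ in $b$ is inherited from $\widetilde{T_{\lambda}}$ and $\widetilde{T_{\rho}}$.

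The mutual-determination claim then follows easily: $\Delta$ trivially recovers both canonical maps via \eqref{eq:delta-tltr}, and conversely either of $\widetilde{T_{\lambda}},\widetilde{T_{\rho}}$ alone recovers $\Delta(b)$ because it prescribes the witness $\Delta(b)(a\otimes 1)$ or $\Delta(b)(1\otimes c)$, and these witnesses are unique by the non-degeneracy of $\bsA\otimes\btA$ asserted in (A3). The main obstacle I anticipate is not conceptual but rather one of careful bookkeeping: one must sharply distinguish legitimate multiplier actions on $\bsA\otimes\btA$ (such as right multiplication by $1\otimes t(x)$) from the merely formal witness symbols $T(a\otimes 1)$ and $T(1\otimes c)$, and deploy (A3) at exactly the right steps to promote identities of evaluations on all $c\in A$ to identities of the witness elements themselves.
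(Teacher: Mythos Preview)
Your proposal is correct. The paper actually omits the proof of this lemma entirely, treating it as a routine verification; your plan spells out precisely the natural argument the authors implicitly rely on, namely using the defining witness property of $\AltkA$ together with the non-degeneracy assumption (A3) to pass back and forth between $\Delta$ and the pair $(\widetilde{T_{\lambda}},\widetilde{T_{\rho}})$.
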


Before we can list the defining properties of a left comultiplication $\Delta$ and the
corresponding properties of the associated maps  $\widetilde{T_{\lambda}}$ and $\widetilde{\Tr}$, we need to fix some notation.
\begin{notation} \label{notation:left} 
  \begin{enumerate}
  \item We need to consider iterated tensor products of vector
    spaces, of $C$-modules and of $C$-bimodules.  For example, we
    write 
    \begin{align*}
      \ctA \oo A \oo \csA \quad \text{and} \quad \ctA \oo {^{C}A^{C}}
      \oo \csA
    \end{align*}
    for the quotients of $A\otimes A \otimes A$ by the subspaces
    spanned by all elements of the form $t(x)a\otimes b\otimes c -
    a\otimes b\otimes s(x)a$ in the first case, or of the form
    $t(x)a\otimes b\otimes c - a\otimes bt(x)\otimes c$ or $a\otimes
    t(x)b\otimes c - a\otimes b \otimes s(x)c$ in the second case.

  \item We fix an algebra $B$ with an anti-isomorphism $\kappa\colon B\to C$, use this anti-isomorphism to regard $A$ as a $B$-module in various ways, and write
    \begin{align*}
      \btA \oo \bAt \quad \text{and} \quad \bAs \oo \bsA
    \end{align*}
    for the quotients of $A \oo A$ by the subspaces spanned by all elements of the form $s(\kappa(x))a\oo b - a\oo bs(\kappa(x))$ in case of $\btA \oo \bAt$, or $t(\kappa(x))a \oo b - a\oo bt(\kappa(x))$ in case of $\bAs \oo \bsA$. Note that this definition does not depend on the choice of $B$ or $\kappa$. From Section \ref{section:hopf} on, we shall fix a specific $B$ and $\kappa$ and explicitly define the underlying $B$-module structures on $A$, which do depend on this choice.

  \item     Given vector spaces $V$ and $W$, we denote by $\Sigma_{(V,W)}
    \colon V\otimes W \to W \otimes V$ the flip map. In case $V=W=A$, the flip map descends to isomorphisms
    \begin{align*}
      \Sigma_{(\Ac,\cA)} &\colon \Ac \oo \cA \to \btA \oo \bAt, & \Sigma_{(\ctA, \cAt)} &\colon \ctA \oo \cAt \to \Ab \oo \bA.
    \end{align*}
  \item We adopt the usual leg notation for maps on tensor product. For example, we write $(\widetilde{T_{\lambda}})_{13}$ for the composition
    \begin{align*}
      A \otimes A \otimes A \xrightarrow{\id \otimes \Sigma_{(A,A)}} A
      \otimes A \otimes A \xrightarrow{\widetilde{T_{\lambda}} \otimes
        \id} \AlA \otimes A \xrightarrow{\id \otimes
        \Sigma_{(A,A)}} \ctA \otimes A \otimes \csA.
    \end{align*}
  \item The multiplication maps $m\colon A \otimes A \to A$ and $m^{\op} = m\circ \Sigma_{(A,A)} \colon A \otimes A\to A$ \new descend \old to maps
    \begin{align*}
&      \Ac \oo \cA \xrightarrow{m_{C}} A, & & \Ab \oo \bA \xrightarrow{m_{B}} A, &
 & \ctA \oo \cAt \xrightarrow{m_{C}^{\op}} A, & & \btA \oo \bAt \xrightarrow{m_{B}^{\op}} A.
    \end{align*}
  \end{enumerate}
\end{notation}

With this notation at hand,  we can write down the key conditions on $\Delta$, $\widetilde{\Tl}$ and $\widetilde{\Tr}$.
 Note that  \eqref{eq:tltr-welldefined} is equivalent to saying that $\widetilde{\Tl}$ and $\widetilde{\Tr}$ are maps of $C$-modules
\begin{align} \label{eq:tltr-module-1}
\widetilde{\Tl}&\colon  \ctA \oo A \to \ctA \oo {_{C}A_{C}}, &
\widetilde{\Tr} &\colon A \oo \cA \to {^{C}A^{C}} \oo \cA.
\end{align}

\begin{lemma} \label{lemma:tltr} Let $\Delta \colon A \to \AltkA$ and $\widetilde{T_{\lambda}},\widetilde{T_{\rho}}\colon A
\otimes A \to \AlA$ be linear maps related by \eqref{eq:delta-tltr}.
\begin{enumerate}
\item The map $\Delta$ is a homomorphism if and only if one (and then both) of the
  following diagrams commute:
  \begin{align} \label{dg:tltr-multiplicative} \xymatrix@R=15pt{ A \otimes A \otimes A
      \ar[r]^(0.6){\id \otimes m} \ar[d]_{(\widetilde{T_{\lambda}})_{13}}
      & A \otimes A \ar[dd]^{\widetilde{T_{\lambda}}} && A \otimes A
      \otimes A \ar[r]^(0.6){m^{\op} \otimes \id} \ar[d]_{
        (\widetilde{T_{\rho}})_{13}} & A \otimes A \ar[dd]^{\widetilde{T_{\rho}}} \\
      \ctA \otimes A \otimes \csA \ar[d]_{\widetilde{T_{\lambda}} \otimes
        \id} & && \ctA\otimes A \oo \csA
      \ar[d]_{\id \otimes \widetilde{ T_{\rho}}} \\ \ctA \otimes {_{C}A_{C}} \otimes
      \csA \ar[r]_(0.6){\id \otimes m_{C}} & \AlA &&
      \ctA \otimes {^{C}A^{C}} \otimes \csA \ar[r]_(0.6){m^{\op}_{C} \otimes \id}
      & \AlA }
      \end{align}
    \item The map $\Delta$ satisfies
      \begin{align} \label{eq:left-delta-bimodule}
        \Delta(s(y)t(x)as(y')t(x')) = (s(y) \otimes
        t(x))\Delta(a)(s(y') \otimes t(x'))
      \end{align} 
if and only if one (and then both) of the following conditions hold:
\begin{align} \label{eq:tltr-bimodule}
  \begin{aligned}
    \widetilde{T_{\lambda}}(a \otimes t(x)s(y)bt(x')s(y')) &= (s(y) \otimes t(x))\widetilde{T_{\lambda}}(s(y')a
    \otimes b)(1 \otimes t(x')), \\ 
    \widetilde{T_{\rho}}(t(x)s(y)at(x')s(y') \otimes b) &= (s(y) \otimes t(x))\widetilde{T_{\rho}}(a \otimes
    t(x')b)(s(y') \otimes 1).
  \end{aligned}
      \end{align} 
      If these conditions hold, then $\widetilde{T_{\lambda}}$ and
      $\widetilde{T_{\rho}}$ descend to maps
\begin{align*} T_{\lambda}&\colon \btA \otimes \bAt \to \AlA,
  & T_{\rho}&\colon \bAs \otimes \bsA \to \AlA.
\end{align*} 
\item Assume that the conditions in (2) hold. Then $\Delta$ is coassociative in the sense
that
    \begin{align} \label{eq:delta-coassociative} (\Delta \otimes \iota)(\Delta(b)(1 \otimes c)) (a\otimes 1 \otimes 1) =
(\iota\otimes \Delta)(\Delta(b)(a \otimes 1))(1 \otimes 1 \otimes c)
    \end{align} 
if and only if the following diagram commutes:
    \begin{align} \label{dg:tltr-coassociative} \xymatrix@R=18pt{ 
        A \otimes A \otimes A \ar[r]^(0.45){\id \otimes \widetilde{\Tr}} \ar[d]_{\widetilde{\Tl} \otimes \id} &
A \otimes \AlA \ar[d]^{\widetilde{\Tl} \otimes \id} \\  \AlA \otimes A \ar[r]^(0.45){\id \otimes \widetilde{
  \Tr}} & \AlAlA.  }
    \end{align}
  \end{enumerate}
\end{lemma}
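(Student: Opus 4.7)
The plan is to unpack each of the three equivalences by evaluating the relevant diagrams on generic tensors, using the defining formulas $\widetilde{T_\lambda}(a\otimes b)=\Delta(b)(a\otimes 1)$ and $\widetilde{T_\rho}(a\otimes b)=\Delta(a)(1\otimes b)$ to translate between diagrammatic commutativity and operator statements on $A$. At each step the non-degeneracy of $\bsA\otimes\btA$ over $A\otimes 1$ and over $1\otimes A$ granted by (A3) lets us convert equalities of the form $X(a\otimes 1)=Y(a\otimes 1)$ or $X(1\otimes b)=Y(1\otimes b)$ holding for all $a$ or $b$ into equalities $X=Y$ in $\AltkA$.

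For part (1) I chase the left diagram of \eqref{dg:tltr-multiplicative} on a generic $a\otimes b\otimes c$. The upper route $\id\otimes m$ followed by $\widetilde{T_\lambda}$ produces $\Delta(bc)(a\otimes 1)$. The lower route first applies $(\widetilde{T_\lambda})_{13}$, yielding $\Delta(c)(a\otimes 1)$ spread across legs $1,3$ with $b$ untouched in leg $2$, then applies $\widetilde{T_\lambda}\otimes\id$ to legs $1,2$, and finally multiplies along the $B$-balanced middle leg via $m^B$, unwinding to $\Delta(b)\Delta(c)(a\otimes 1)$. Equality for all $a$ is thus exactly $\Delta(bc)=\Delta(b)\Delta(c)$; the right-hand diagram yields the same conclusion via $(1\otimes c)$ and the other non-degeneracy.

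For part (2), multiplying \eqref{eq:left-delta-bimodule} on the right by $(a\otimes 1)$ and inserting the definition of $\widetilde{T_\lambda}$ transcribes it directly into the first line of \eqref{eq:tltr-bimodule}, and the same move with $(1\otimes b)$ and $\widetilde{T_\rho}$ gives the second line; conversely, either line reconstructs \eqref{eq:left-delta-bimodule} via the non-degeneracy argument. Once \eqref{eq:tltr-bimodule} is established, specialising the parameters $x,y,x',y'$ (in combination with \eqref{eq:tltr-welldefined}) shows that $\widetilde{T_\lambda}$ and $\widetilde{T_\rho}$ vanish on the defining tensor relations of $\btA\otimes\bAt$ and $\bAs\otimes\bsA$, respectively, producing the factorisations $T_\lambda$ and $T_\rho$.

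For part (3), I evaluate both compositions in \eqref{dg:tltr-coassociative} on a generic $a\otimes b\otimes c$. The upper-right route turns $b\otimes c$ into $\Delta(b)(1\otimes c)$ and then applies $\widetilde{T_\lambda}\otimes\id$, producing $(\Delta\otimes\id)(\Delta(b)(1\otimes c))(a\otimes 1\otimes 1)$. The lower-left route turns $a\otimes b$ into $\Delta(b)(a\otimes 1)$ and then applies $\id\otimes\widetilde{T_\rho}$, producing $(\id\otimes\Delta)(\Delta(b)(a\otimes 1))(1\otimes 1\otimes c)$; equality of these for all $a,b,c$ is precisely \eqref{eq:delta-coassociative}. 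The main obstacle throughout, and what the diagrammatic formulation is designed to keep under control, is purely the bookkeeping of module structures: at each intermediate step one must check that the resulting element really lies in the labelled tensor product, i.e.\ respects the appropriate $B$- or $B^{\op}$-balancing relations. The bimodule conditions of part (2), together with \eqref{eq:tltr-welldefined}, supply exactly what is needed for the compositions $\widetilde{T_\lambda}\otimes\id$ and $\id\otimes\widetilde{T_\rho}$ in \eqref{dg:tltr-coassociative} to be well-defined on the target tensor products.
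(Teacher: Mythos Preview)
Your proposal is correct and is precisely the direct verification by unpacking definitions that the paper leaves implicit (the lemma is stated without proof, being regarded as routine). Your handling of each part---chasing generic tensors through the diagrams, invoking (A3) to cancel the test elements $(a\otimes 1)$ or $(1\otimes b)$, and noting that the bimodule relations of (2) are exactly what makes the maps in \eqref{dg:tltr-coassociative} well-defined on the balanced tensor products---is the natural and essentially unique approach.
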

Note that  \eqref{eq:tltr-bimodule}  implies that $\widetilde{\Tl}$ and $\widetilde{\Tr}$ are maps of  $C$-bimodules 
\begin{align} \label{eq:tltr-bimodule-1}
  \widetilde{\Tl}&\colon A \oo {_{C}A^{C}} \to {_{C}A^{C}} \oo {_{C}A^{C}}, & 
  \widetilde{\Tr}&\colon {_{C}A^{C}} \oo A \to {_{C}A^{C}} \oo {_{C}A^{C}}.
\end{align}

If $A$ is unital and $\AltkA$ is identified with the left Takeuchi product
 \eqref{eq:left-takeuchi}, then equations \eqref{eq:left-delta-bimodule} and
 \eqref{eq:delta-coassociative} reduce to the conditions
 \begin{align} \label{eq:delta-unital}
   \Delta(t(x)s(y)) &= s(y) \oo t(x),  &
   (\Delta \otimes \id)\circ \Delta=   (\id \otimes \Delta) \circ \Delta.
 \end{align}

If (A1)--(A3) and
\eqref{eq:left-delta-bimodule}, \eqref{eq:delta-coassociative} hold, we
call $(A,C,s,t,\Delta)$ a left multiplier bialgebroid:
\begin{definition}  \label{definition:left-algebroid}
  A \emph{left multiplier bialgebroid} is a tuple $\mathcal{A}=(A,C,s,t,\Delta)$ consisting of
  \begin{enumerate}
  \item algebras $A$ and $C$ such that the right $A$-module $A_{A}$ is
    non-degenerate and idempotent;
  \item a homomorphism $s\colon C \to M(A)$ and an anti-homo\-morphism
    $t \colon C \to M(A)$ such that the images of $s$ and $t$ commute,
    the $C$-modules $\csA$ and $\ctA$ are faithful and idempotent, and
    $\AlA$ is non-degenerate as a right module over $A
    \otimes 1$ and over $1 \otimes A$;
  \item a homomorphism $\Delta \colon A \to \AltkA$, called the
    \emph{left comultiplication}, which satisfies the $C$-bilinearity
    condition \eqref{eq:left-delta-bimodule} and the coassociativity
    condition \eqref{eq:delta-coassociative}.
\end{enumerate}
We call the maps $T_{\lambda}$ and $T_{\rho}$ defined above the  \emph{canonical
  maps} associated to $\mathcal{A}$. 
We call a left multiplier bialgebroid as above \emph{unital} if the algebras $A,C$ and the maps $s,t,\Delta$ are unital.
\end{definition}

Given a left multiplier bialgebroid,
one can reverse the comultiplication as follows. 
\begin{proposition} \label{prop:left-co-opposite} Let
  $\mathcal{A}=(A,C,s,t,\Delta)$ be a left multiplier bialgebroid with
  associated maps $(\widetilde{\Tl},\widetilde{\Tr})$.  Regard $s$
  as an anti-homomorphism and $t$ as a homomorphism from $C^{\op}$ to
  $M(A)$. Write $A^{C^{\op}}$ and $_{C^{\op}}A$ for $A$, regarded as a $C^{\op}$-module via $a\cdot y^{\op}:= s(y)a$ and $y^{\op}\cdot a := t(y)a$, where $y\in C$ and $a\in A$. Then the  flip map $\Sigma_{(A,A)}$ on $A\otimes A$ descends to an isomorphism 
  $\Sigma_{(\ctA,\csA)}$ from $\AlA$ to ${A^{C^{\op}}} \otimes {_{C^{\op}}A}$,
  there exists a well-defined homomorphism
  \begin{align*}
\Delta^{\co} \colon A \to {A^{C^{\op}}} \overline{\times} {_{C^{\op}}A}, \quad    \Delta^{\co}(a)(b\otimes c) = \Sigma_{(\ctA,\csA)}(\Delta(a)(c\otimes b)),
  \end{align*}
 and
$\mathcal{A}^{\co}:=(C^{\op},A,t,s,\Delta^{\co})$
is a left multiplier bialgebroid with associated maps
 \begin{align*}
   \widetilde{\Tl}^{\co} &= \Sigma_{(\ctA,\csA)} \circ \widetilde{\Tr} \circ \Sigma_{(A,A)}, & 
   \widetilde{\Tr}^{\co} &= \Sigma_{(\ctA,\csA)} \circ \widetilde{\Tl} \circ \Sigma_{(A,A)}.
 \end{align*}
\end{proposition}
The proof is straightforward and therefore omitted.

The canonical maps satisfy  pentagonal relations:
\begin{proposition} \label{prop:tltr-pentagon} Let $(A,C,s,t,\Delta)$ be a left multiplier
  bialgebroid. If $\AlAlA$ is non-degenerate as a right
    module over $A \otimes 1\otimes 1$ and over $1\otimes 1\otimes A$, then the
    following diagrams commute: 
  \begin{gather*}
\xymatrix@C=40pt@R=15pt{ 
A \oo A \oo A \ar[r]^(0.45){(\widetilde{\Tl})_{12}(\widetilde{\Tl})_{23}} \ar[d]_{(\widetilde{\Tl})_{12}} & \AlAlA, \\ \ar[r]^{(\widetilde{\Tl})_{13}} 
\AlA \oo A &
A^{C} \otimes  A^{B} \oo {^{B}_{C}A} \ar[u]_{(\Tl)_{23}} }   \qquad
\xymatrix@C=40pt@R=15pt{ 
A \oo A \oo A \ar[d]_{(\widetilde{\Tr})_{23}} \ar[r]^(0.45){(\widetilde{\Tr})_{23}(\widetilde{\Tr})_{12}}  & \AlAlA.  \\
A \oo \AlA \ar[r]^{(\widetilde{\Tr})_{13}} &
A^{C}_{B} \oo \bA \oo  \cA\ar[u]_{({\Tr})_{12}} }
  \end{gather*}
\end{proposition}
\begin{proof}   
The pentagonal relation for $\widetilde{\Tr}$
follows from commutativity of the diagram
  \begin{align*} \xymatrix@C=20pt@R=18pt{  
&A \otimes  A \otimes A \otimes A \ar[r]^{(\widetilde{\Tr})_{23}}
\ar `l/0pt[l] `d[lddd] [dddr]_{(\widetilde{\Tr})_{24}(\widetilde{\Tr})_{34}} \ar[d]_{(\widetilde{\Tl})_{12}}& 
A \otimes \AlA \oo A
\ar[r]^{(\widetilde{\Tr})_{34}} \ar[d]^{m^{\op} \otimes \id \otimes \id} & 
A \oo \AlAlA
\ar[d]^{m^{\op}
\otimes \id \otimes \id} & \\ & 
\AlA \oo A \oo A
 \ar[r]^(0.55){\id \otimes m \otimes \id}
\ar[d]_(0.4){(\widetilde{\Tr})_{34}} & \AlA \oo A \ar[r]^{(\widetilde{\Tr})_{23}} & \AlAlA, \\ & 
\AlA \oo \AlA  \ar[r]^{(\widetilde{\Tr})_{24}}& 
\ctA \oo  {_{C}A_{B}^{C}} \oo \bA \oo \cA
\ar[ru]^{\id \otimes m_{B} \otimes \id} & 
A \oo \AlAlA \ar[u]_{m^{\op} \otimes \id \otimes \id} \\ & &
A \oo A^{C}_{B} \oo \bA \oo \cA
\ar[u]_{(\widetilde{\Tl})_{12}} \ar[ru]_{({\Tr})_{23}} & }
  \end{align*} 
and the  pentagonal relation for $\Tl$ can be concluded similarly. 
\end{proof}
\begin{remark}
  Similar arguments as those used in the proof of Lemma \ref{lemma:non-degenerate} show that the
  assumption on the module $\AlAlA$ holds if condition (1) or (2)
  of Lemma \ref{lemma:non-degenerate} is satisfied, or if the algebra $C$ is firm and the modules
  $\csA$ and $\ctA$ are locally projective. In the latter case, one uses the fact that then $\csA\otimes\ctA$ is
  projective with respect to the  $C$-modules structure given by $x\cdot (a\otimes b) = a\otimes
  t(x)b$ and $(a\otimes b) \cdot x = s(x)a\otimes b$, respectively.
\end{remark}

\new
Throughout this article, we shall mainly use the canonical maps instead of the comultiplication itself. To make some formulas and calculations more accessible, we also write them out in a generalized Sweedler notation,  which is more
intuitive but a bit difficult to make precise. We shall not attempt to formalize it and note that for
every expression involving this notation, one needs to check whether it is well-defined.  In the
context of multiplier Hopf algebras, the correct usage of this notation is explained in
\cite{daele:0,daele:tools}.  With these words of warning, given a left multiplier bialgebroid
$\mathcal{A}=(A,C,s,t,\Delta)$, we write
\begin{align*} 
\Delta(a) &=  a_{(1)} \otimes a_{(2)} \in \End(\AlA)
 \end{align*}
for all $a\in A$,
 where the right hand sides
are  purely formal expressions.  For example, we then have
\begin{gather} \label{eq:sweedler-product}
(ab)_{(1)} \otimes (ab)_{(2)} = a_{(1)}b_{(1)} \otimes a_{(2)}b_{(2)},  \\  \label{eq:sweedler-module}
\begin{aligned}
 a_{(1)}
  \otimes a_{(2)} s(z) &= a_{(1)} t(z) \otimes a_{(2)},
\end{aligned} \\ \label{eq:sweedler-canonical}
\begin{aligned} \Tl (a\otimes b) &=  b_{(1)}a \otimes
b_{(2)}, & \Tr(a\otimes b) &=   a_{(1)} \otimes a_{(2)}b,
   \end{aligned} \\
  \Delta^{\co}(a) = a_{(2)} \otimes a_{(1)}
\end{gather}
for all $a,b\in A$ and $z\in C$,
and the pentagonal relation for $\widetilde{\Tr}$ takes the form
\begin{align*}
  a_{(1)} \otimes (a_{(2)}b)_{(1)} \otimes (a_{(2)}b)_{(2)} c = (a_{(1)})_{(1)} \otimes (a_{(1)})_{(2)}b_{(1)} \otimes a_{(2)}b_{(2)}c.
\end{align*}
\old
\section{Counits for left multiplier bialgebroids} \label{section:left-counits}

\new

 We next discuss counits of left multiplier bialgebroids, and prove uniqueness, some multiplicativity, and
existence in the case where the canonical maps are surjective or bijective, respectively. 

Let us fix some notation. \old
 Given a left multiplier bialgebroid $(A,C,s,t,\Delta)$ and morphisms
 $\phi \in \Hom(\ctA,\Cc)$ and $\psi \in \Hom(\csA,\cC)$, we can form slice maps
\begin{align*}  \phi \new \odot \old \id &\colon \AlA \to A, \ a \otimes b \mapsto s(\phi(a))b, & \id \new \odot \old \psi &\colon \AlA \to A, \ a
  \otimes b \mapsto t(\psi(b))a.
\end{align*}
\begin{definition} \label{df:left-counit} A \emph{left counit} for a left multiplier bialgebroid $(A,C,s,t,\Delta)$ is a
  map $\varepsilon \colon A \to C$  that satisfies
  \begin{align}
    \label{eq:left-counit-bimodule}
    \varepsilon(s(y)a ) &= y\varepsilon(a) &&\text{and}& \varepsilon(t(x)a) &=
    \varepsilon(a)x && \text{for all } a\in A, x,y\in C,
  \end{align}
 that is, $\varepsilon \in \Hom(\csA,\cC) \cap
  \Hom(\ctA,\Cc)$,\sindex{epsilon}{$\varepsilon$}{left/right counit} and
  \begin{align}
        \label{eq:left-counit}
        (\varepsilon \new \odot \old\id)( T_{\rho}(a\otimes b)) &= ab && \text{and} &   (\id
        \new \odot \old \varepsilon)( T_{\lambda}(a \otimes b))&=ba && \text{for
        all } a,b\in A.
  \end{align}
\end{definition}
\new \begin{remark}
  \begin{enumerate}
  \item In Sweedler notation, \eqref{eq:left-counit} takes the form
    \begin{align} \label{eq:sweedler-left-counit}
      s(\varepsilon(a_{(1)}))a_{(2)}b = ab  \quad \text{and} \quad
      t(\varepsilon(b_{(2)}))b_{(1)}a = ba \quad \text{for all } a,b\in A.
    \end{align} 
  \item \old Note that left counits for a left multiplier bialgebroid $\mathcal{A}$
and left counits for its co-opposite $\mathcal{A}^{\co}$ introduced in
Proposition \ref{prop:left-co-opposite} coincide up to the canonical
linear identification $C\to C^{\op}$.
\new  \end{enumerate}
\end{remark}\old
 
If $A$ has a unit $1_{A}$, we can identify $\AltkA$ with the
left Takeuchi product \eqref{eq:left-takeuchi}, and
then commutativity of the diagrams above is equivalent to the
equations
\begin{align} \label{eq:left-counit-unital-1} (\varepsilon \new \odot \old \id) \circ \Delta &= \id_{A}= (\id \new \odot \old \varepsilon) \circ \Delta.
\end{align}
From these equations, one can easily deduce that a left counit, if it exists, is unique. If it also is multiplicative in the sense that 
\begin{align} \label{eq:left-counit-unital-2}
  \varepsilon(ab) = \varepsilon(as(\varepsilon(b))) = \varepsilon(at(\varepsilon(b)))
\end{align}
for all $a,b\in A$, then we obtain a left bialgebroid in \new  the well-known sense as described in, for example,  in \cite{boehm:hopf}, \cite{lu:hopf}:  \old
\begin{proposition} \label{prop:left-unital} Let $(A,C,s,t,\Delta)$ be a  unital left multiplier bialgebroid with a left counit $\varepsilon$ that is unital and satisfies \eqref{eq:left-counit-unital-2}. Then we can  regard $A$ as an $C\otimes C^{\op}$-ring and as a $C$-bimodule via $y \cdot a \cdot x = t(x)s(y)a$ for all $x,y\in C,a\in A$, and  $\Delta$ as a homomorphism from $A$ to $\AltkA \cong A {_{C}\times} A$. The tuple $(A,\Delta,\varepsilon)$ is a $C$-coring and, together with the $C\otimes C^{\op}$-ring structure on $A$, forms a left bialgebroid.  Conversely, every left bialgebroid arises this way from a unital left multiplier bialgebroid with a \new unital \old  left counit satisfying \eqref{eq:left-counit-unital-2}.
\end{proposition}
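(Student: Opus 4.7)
The proposition is essentially a dictionary: in the unital case, the data $(A,B,s,t,\Delta,\varepsilon)$ and the axioms defining a left multiplier bialgebroid with multiplicative counit translate termwise into the data and axioms of a left bialgebroid over $\bfL=B^{\op}$ in the sense of B\"ohm et al. My plan is to identify the target spaces and then check, one by one, each clause of the bialgebroid definition, followed by the converse.

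First I would set up the translation of base data. With $\bfL=B^{\op}$, the homomorphism $s\colon B\to A$ becomes a homomorphism $\bfL^{\op}\to A$ and the anti-homomorphism $t\colon B\to A$ becomes a homomorphism $\bfL\to A$; these play the roles of the target and source map, respectively, of the putative left bialgebroid. Since $s(B)$ and $t(B)$ commute in $A$, the prescription $x\otimes y^{\op}\mapsto s(x)t(y)$ defines a unital algebra homomorphism $\bfL^{\op}\otimes \bfL\to A$, equivalently an $\bfL\otimes\bfL^{\op}$-ring structure. The $\bfL$-bimodule structure $y\cdot a\cdot x = s(x)t(y)a$ is well-defined (the calculation uses precisely that $t$ is anti-multiplicative and $s$ is multiplicative), and a direct verification of relations shows that the tensor product $\bsA\otimes\btA$ coincides with $A\otimes_{\bfL}A$ for this bimodule structure. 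Under the identification $\AltkA\cong \bsA\otimes\btA$ (valid when $A$ is unital via $T\mapsto T(1\otimes 1)$), the defining Takeuchi condition $w(s(x)\otimes 1)=w(1\otimes t(x))$ becomes the usual condition defining the Takeuchi $\times_{\bfL}$-product. This identifies $\AltkA$ with $A \times_\bfL A$.

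Next I would check the coring and bialgebroid axioms in turn, each of which reduces to a hypothesis already in hand. The $\bfL$-bilinearity of $\Delta$, that is, $\Delta(y\cdot a\cdot x)=y\cdot \Delta(a)\cdot x$ in $A\otimes_{\bfL}A$, is the unital specialization of \eqref{eq:left-delta-bimodule}. Coassociativity of $\Delta$ as a map to $A\otimes_{\bfL}A\otimes_{\bfL}A$ is the specialization of \eqref{eq:delta-coassociative} obtained by inserting unit elements, as recorded in \eqref{eq:delta-unital}. Multiplicativity of $\Delta$ as a map $A\to A\times_{\bfL}A$ is part of the definition of a left multiplier bialgebroid. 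The $\bfL$-bilinearity of $\varepsilon$ is the content of \eqref{eq:left-counit-bimodule}, and the counit equations in the form \eqref{eq:left-counit-unital-1} follow from \eqref{eq:left-counit} by evaluating at the unit (using $T_{\lambda}(1\otimes b)=\Delta(b)$ and $T_{\rho}(a\otimes 1)=\Delta(a)$). Finally, the weak multiplicativity of $\varepsilon$ required for a left bialgebroid is imposed as \eqref{eq:left-counit-unital-2}. Assembling these verifications yields the left bialgebroid $(A,\Delta,\varepsilon)$ over $\bfL$.

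For the converse, I would start with a left bialgebroid $(A,\bfL,s_{\bfL},t_{\bfL},\Delta,\varepsilon)$, set $B:=\bfL^{\op}$, $s:=t_{\bfL}$, $t:=s_{\bfL}$, and check conditions (A1)--(A3) together with the axioms of a left multiplier bialgebroid. Conditions (A1) and (A2) are automatic since all algebras and module actions are unital; condition (A3) is immediate as $\bsA\otimes\btA$ is a unital module over $A\otimes 1$ and $1\otimes A$. The bimodule and coassociativity conditions \eqref{eq:left-delta-bimodule} and \eqref{eq:delta-coassociative} are then just the unital \eqref{eq:delta-unital}, and the axioms \eqref{eq:left-counit-bimodule}, \eqref{eq:left-counit}, \eqref{eq:left-counit-unital-2} for $\varepsilon$ are precisely the counit axioms of the bialgebroid. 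These two constructions are manifestly inverse to one another.

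The only genuine point requiring care, and the step I expect to absorb most of the writing, is the bookkeeping of left/right and opposite conventions for $B$ versus $\bfL$ and for $s$ versus $t$ as flagged in Remark~2.1; once the identification of $\AltkA$ with $A\times_{\bfL}A$ is pinned down correctly, everything else is a mechanical rewriting of hypotheses.
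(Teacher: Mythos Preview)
Your approach matches the paper's: both treat the proposition as a straightforward dictionary between the two axiom systems, and your detailed enumeration of which hypothesis corresponds to which bialgebroid axiom is exactly what the paper's terse proof gestures at.

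There is one small gap in your converse direction. You assert that condition (A2) is ``automatic since all algebras and module actions are unital,'' but (A2) requires the $B$-modules $\bsA$ and $\btA$ to be \emph{faithful}, which amounts to injectivity of $s$ and $t$. Unitality alone gives idempotence ($s(B)A=A$ because $s(1)=1$) but not injectivity: an $\bfL\otimes\bfL^{\op}$-ring structure does not presuppose that the structure map is injective. The paper fills this by using the counit: since $\varepsilon$ is unital and satisfies \eqref{eq:left-counit-bimodule}, one has $\varepsilon(s(x))=x\varepsilon(1_{A})=x$, so $\varepsilon\circ s=\id_{B}$ and $s$ is injective (and similarly for $t$). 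Once you add this one-line argument, your proof is complete and essentially identical in content to the paper's.
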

\begin{proof} 
   \new Straightforward. \old
\end{proof}

In the non-unital case, we can prove uniqueness and multiplicativity of left counits only under additional assumptions which are analogues of the conditions in  \cite[Definition 1.4]{daele:weakmult}.
\begin{definition} \label{df:full} 
We call a left multiplier bialgebroid $(A,C,s,t,\Delta)$ \emph{left-full} if $A$ is   equal to the linear span of elements of the form  $(\id \odot \psi)(\widetilde{T_{\rho}}(a \otimes b))$,  where $\psi \in \Hom(\csA,\cC)$ and $a,b\in A$,   \emph{right-full} if $A$ is equal to the linear span of elements of the form   $(\phi \odot \id)(\widetilde{T_{\lambda}}(a\otimes b))$,  where $\phi
  \in \Hom(\ctA,\Cc)$ and $a,b\in A$,  and \emph{full} if it is both left-full and right-full.
\end{definition}
In the unital case, \eqref{eq:left-counit-unital-1} shows that existence of a left counit implies
fullness.  In general, we only know the following:
\begin{remark} \label{rmk:full}
 If a left multiplier bialgebroid $(A,C,s,t,\Delta)$ has a left counit $\varepsilon$ and its
  canonical map $T_{\lambda}$ (or $T_{\rho}$) is surjective, then it is left-full (resp.\
  right-full). To see this, take $\phi$ (or $\psi$) above to be equal to $\varepsilon$ and use the
  relation $AA=A$.
\end{remark}
If the left multiplier bialgebroid is full, then  the left counit, if it exists, is unique:
\begin{proposition} \label{prop:left-counit-props}
  Let $\mathcal{A}=(A,C,s,t,\Delta)$ be a left multiplier bialgebroid
  with a left counit $\varepsilon$.
  \begin{enumerate}
  \item If $\mathcal{A}$ is left-full or right-full,  then the left counit is unique.
  \item If the canonical map $T_{\lambda}$ (or $T_{\rho}$) is surjective, then for
    all $a,b\in A$,
    \begin{align} \label{eq:left-counit-multiplicative} 
      \varepsilon(ab) &= \varepsilon(as(\varepsilon(b))) \quad (\text{or } \varepsilon(ab) =\varepsilon(at(\varepsilon(b))),
      \text{ respectively}).
    \end{align}
  \end{enumerate}
\end{proposition}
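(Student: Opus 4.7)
For part (1), I would establish the key identity
\[
\varepsilon\bigl((\phi \otimes \id)\widetilde{T_{\lambda}}(a\otimes b)\bigr) = \phi(ab)
\]
for every $\phi \in \Hom(\bsA, \bB)$ and $a, b \in A$. Writing $\widetilde{T_{\lambda}}(a\otimes b) = \sum_{i} x_{i} \otimes y_{i}$ in $\bsA \otimes \btA$, the counit axiom $\sum_{i} s(\varepsilon(y_{i}))x_{i} = ab$ combined with $\phi(s(z)c) = z\phi(c)$ evaluates the right-hand side as $\phi(ab) = \sum_{i} \varepsilon(y_{i})\phi(x_{i})$. Independently, $(\phi \otimes \id)\widetilde{T_{\lambda}}(a\otimes b) = \sum_{i} t(\phi(x_{i}))y_{i}$, and applying $\varepsilon$ via $\varepsilon(t(y)c) = \varepsilon(c)y$ reproduces the same sum. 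Since $\phi(ab)$ does not involve $\varepsilon$, right-fullness forces any two left counits to coincide on this spanning set, yielding uniqueness. The left-full case proceeds symmetrically using $\widetilde{T_{\rho}}$ and $\psi \in \Hom(\btA, \Bb)$ to obtain the parallel identity $\varepsilon\bigl((\id \otimes \psi)\widetilde{T_{\rho}}(a\otimes b)\bigr) = \psi(ab)$.

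For part (2), Remark \ref{rmk:full} tells me that surjectivity of $T_{\lambda}$ implies right-fullness, so uniqueness of the counit is automatic from part (1). To derive the multiplicativity identity $\varepsilon(ab) = \varepsilon(as(\varepsilon(b)))$, I plan to exploit the $B$-bilinearity relation \eqref{eq:left-delta-bimodule}, which, interpreted in the multiplier sense for $x \in B$, yields $\Delta(as(x)) = \Delta(a)(1 \otimes s(x))$ and hence the multiplier identity
\[
\widetilde{T_{\lambda}}(c \otimes as(\varepsilon(b))) = \widetilde{T_{\lambda}}(c \otimes a)(1 \otimes s(\varepsilon(b)))
\]
for every $c \in A$. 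Applying $(\id \otimes \varepsilon)$ and the counit axiom on the left gives $cas(\varepsilon(b))$, while on the right it produces an expression involving $v_{i}s(\varepsilon(b))$, where $\widetilde{T_{\lambda}}(c\otimes a) = \sum_{i} u_{i}\otimes v_{i}$; a further application of $\varepsilon$ yields a formula for $\varepsilon(cas(\varepsilon(b)))$. The analogous computation for $\widetilde{T_{\lambda}}(c\otimes ab) = \Delta(a)\widetilde{T_{\lambda}}(c\otimes b)$ via the multiplicativity diagram \eqref{dg:tltr-multiplicative} gives a formula for $\varepsilon(cab)$. Surjectivity of $T_{\lambda}$, together with the slice-map identity from part (1) applied to the slice $\phi(c') := \varepsilon(c's(\varepsilon(b)))$ (which lies in $\Hom(\bsA, \bB)$ because $\phi(s(z)c') = z\phi(c')$), is what allows these two expressions to be identified.

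The main obstacle in part (2) is to make the multiplier identity $\widetilde{T_{\lambda}}(c\otimes as(\varepsilon(b))) = \widetilde{T_{\lambda}}(c\otimes a)(1 \otimes s(\varepsilon(b)))$ rigorous in the non-unital setting and to perform the bookkeeping of $B$-balanced iterated tensor products carefully enough that the two derived expressions for $\varepsilon(cab)$ and $\varepsilon(cas(\varepsilon(b)))$ can be matched termwise. A final step, using the idempotence and non-degeneracy of $A$, is required to pass from the validity of the identity for all "test elements" $c \in A$ to the claim $\varepsilon(ab) = \varepsilon(as(\varepsilon(b)))$ itself.
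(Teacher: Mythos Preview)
Your argument for part~(1) is correct and coincides with the paper's: the paper derives the identity $\varepsilon\bigl((\id\otimes\psi)\,T_\rho(a\otimes b)\bigr)=\psi(ab)$ for $\psi\in\Hom(\btA,B_B)$ and invokes left-fullness, while you carry out the mirror computation with $T_\lambda$ and $\phi\in\Hom(\bsA,{_B B})$ and invoke right-fullness.

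For part~(2) the plan has a real gap. First, applying $(\id\otimes\varepsilon)$ to the bilinearity relation $\widetilde{T_\lambda}(c\otimes as(\varepsilon(b)))=\widetilde{T_\lambda}(c\otimes a)(1\otimes s(\varepsilon(b)))$ collapses both sides to $cas(\varepsilon(b))$ by the counit axiom, so no non-trivial formula is produced. Second, inserting the slice $\phi(c'):=\varepsilon(c's(\varepsilon(b)))$ into the identity of part~(1) yields $\varepsilon\bigl((\phi\otimes\id)\widetilde{T_\lambda}(a'\otimes b')\bigr)=\varepsilon(a'b's(\varepsilon(b)))$, again a statement about $\varepsilon$ of a product ending in $s(\varepsilon(b))$, not a comparison with $\varepsilon(a'b'b)$; no link between $\varepsilon(cab)$ and $\varepsilon(cas(\varepsilon(b)))$ comes out. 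Third, even if you could establish $\varepsilon(cab)=\varepsilon(cas(\varepsilon(b)))$ for every $c\in A$, there is no cancellation law that removes $c$: you are comparing values of $\varepsilon$ in $B$, not elements of $A$, so neither non-degeneracy of $A_A$ nor idempotence of $A$ applies.

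The paper proceeds differently. For the $T_\rho$ case it assembles a single commutative diagram whose inner cells encode the counit axiom $(\varepsilon\otimes\id)\circ\widetilde{T_\rho}=m$, associativity of $m$, and naturality of $\widetilde{T_\rho}$, and whose outer boundary is the multiplicativity relation $\widetilde{T_\rho}\circ(m\otimes\id)=(m_B\otimes\id)\circ(\widetilde{T_\rho})_{13}\circ(\id\otimes\widetilde{T_\rho})$. All cells but one commute by these facts; the remaining cell, once the map $\id\otimes\widetilde{T_\rho}$ into it is seen to be surjective, is forced to commute as well, and its commutativity is exactly the statement $\varepsilon(ab)=\varepsilon(at(\varepsilon(b)))$. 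Surjectivity is thus used to pass from an identity valid on the image of $\widetilde{T_\rho}$ to one valid on all of $A\otimes(\AlA)$, not to strip off a test element.
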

\begin{proof}
  (1) Assume that $\mathcal{A}$ is left-full and that $\varepsilon$ is a left counit. Let $a,b\in A, \psi \in
  \Hom(\csA,\cC)$ and write $\widetilde{\Tr}(a\otimes b) = \sum_{i} c_{i} \otimes d_{i}$ with $c_{i},d_{i} \in A$. Then
  \eqref{eq:left-counit} shows that $\sum_{i} s(\varepsilon(c_{i}))d_{i} = ab$ and hence
  \begin{align*} \varepsilon\Big(\sum_{i} t(\psi(d_{i}))c_{i}\Big) &= \sum_{i} \varepsilon(c_{i}) \psi(d_{i}) = \sum_{i}
    \psi(s(\varepsilon(c_{i}))d_{i}) = \psi(ab).
  \end{align*}
  But since $(A,C,s,t,\Delta)$ is assumed to be left-full, elements of the form $\sum_{i} t(\psi(d_{i}))c_{i}$
  span $A$. If $(A,C,s,t,\Delta)$ is right-full, a similar argument applies.

  (2) Assume that $\Tr$ is surjective and consider the following diagram: 
  \begin{align*} \xymatrix@C=35pt@R=18pt{A \otimes A \otimes A
      \ar[r]^(0.4){\id \otimes \widetilde{T_{\rho}}} \ar[rd]_{\id \otimes
      m} \ar[dd]_{m \otimes
        \id} & A \otimes \AlA \ar[d]^{\id \otimes (\varepsilon
 \new        \odot \old \id)} \ar[r]^{(\widetilde{T_{\rho}})_{13}} & A^{C}_{B} \oo {_{B}A} \oo \cA \ar
      `r/0pt[r] [rdd]^{m_{B}
        \otimes \id} \ar[d]^{\id \otimes (\varepsilon \new \odot \old \id)} & \\
      & A \otimes A \ar[d]_{m}  \ar[r]^{\widetilde{T_{\rho}}} & \AlA
      \ar[d]^{\varepsilon \new \odot \old \id} & \\  A\otimes A
      \ar `d/0pt[d] `r[rrrd]^{\widetilde{T_{\rho}}}  [rrr] \ar[r]^{m} &A \ar@{=}[r] &A &  \AlA \ar[l]_{\varepsilon
        \new \odot \old \id}  \\ &&&& }
  \end{align*} 
  The outer cell commutes by \eqref{dg:tltr-module}, and all other cells except for the right one commute as
  well. Since $\widetilde{\Tr}$ is surjective, we can conclude that the right cell must commute. Therefore,
$\varepsilon(ab) =\varepsilon(at(\varepsilon(b)))$ for all $a,b\in A$. If $\Tl$ is surjective,  a similar
  argument applies.
\end{proof}
\new \begin{remark}
 In Sweedler notation, the commutative diagram above amounts to the  calculation
\begin{align*}
s(\epsilon(a_{(1)}b_{(1)}))a_{(2)}b_{(2)}c &=
  s(\varepsilon((ab)_{(1)})) (ab)_{(2)}c & &\text{(by  \eqref{eq:sweedler-product})}   \\ & =  (ab)c  & &\text{(by  \eqref{eq:sweedler-left-counit})}  \\ 
&= s(\varepsilon(a_{(1)}))a_{(2)}s(\epsilon(b_{(1)}))b_{(2)}c & &\text{(by  \eqref{eq:sweedler-left-counit})}  \\
&=
 s(\varepsilon(a_{(1)})t(\epsilon(b_{(1)})))a_{(2)}b_{(2)}c & &\text{(by  \eqref{eq:sweedler-module})}, 
\end{align*}
which, thanks to surjectivity of $\Tr$, implies $\varepsilon(a'b') = \varepsilon(a't(b'))$ for all $a',b'\in A$.
\end{remark} \old

 \new Let $(A,C,s,t,\Delta)$ be a left multiplier bialgebroid. We shall prove existence of a left counit provided that the canonical maps $\Tr$ and $\Tl$ are bijective and a further technical condition holds. This condition involves the left
 ideal ${_{C}I} \subseteq C$ and the right ideal ${I^{C}} \subseteq C$ given by
\begin{align*}  {_{C}I} &:= \lspan\{ \psi(a) : \psi
  \in \Hom(\csA,\cC), a \in A\}, \\
 {I^{C}} &:= \lspan\{ \phi(a) : \phi \in \Hom(\ctA,\Cc), a \in A\}.
\end{align*} \old
Recall that a two-sided ideal $I$ in $C$ is \emph{essential} if $yI\neq 0$  and $Iy\neq 0$ whenever $y\in C$ and $y\neq 0$.
\begin{lemma} \label{lm:counit-isit} Let $(A,C,s,t,\Delta)$ be a left multiplier bialgebroid.
  \begin{enumerate}
  \item   Let $(A,C,s,t,\Delta)$ be a left multiplier bialgebroid with a left counit $\varepsilon$. Then  the
  image $C_{0}:=\varepsilon(A)$ is an idempotent, essential two-sided ideal  in $C$, \new contained in ${I^{C}} \cap {_{C}I}$, \old and $s(C_{0})A=A=t(C_{0})A$.
  \item \new If $s({I^{C}})A=A=t({_{C}I})A$, then ${I^{C}}={I^{C}}\cdot {_{C}I}={_{C}I}$ is an idempotent, essential two-sided ideal in $C$. \old
  \end{enumerate}
\end{lemma}
\begin{proof}
  \new (1) \old Equations \eqref{eq:left-counit-bimodule} and \eqref{eq:left-counit} imply that $C_{0}$ is a two-sided ideal, \new contained in ${_{C}I} \cap {I^{C}}$, \old and that $s(C_{0})A=AA=t(C_{0})A$. But $AA=A$ by (A1).  Applying \eqref{eq:left-counit-bimodule}, we conclude that $C_{0}C_{0}=C_{0}$.  The relations $s(C_{0})A=A=t(C_{0})A$ and injectivity of $s$ and $t$ imply that the ideal $C_{0}$ is essential.

\new
(2) Applying elements of $\Hom(\csA,\cC)$ or $\Hom(\ctA,\Cc)$ to the assumed equality, we find ${_{C}I}={I^{C}}\cdot {_{C}I} = {I^{C}}$. If $z\in C$ is non-zero, then, using injectivity of $s$ and $t$, we can conclude that $s(zy)$ and $t(xz)$ are non-zero for some $y\in {I^{C}}$ and $x\in {_{C}I}$. \old
\end{proof}
\new If $s({I^{C}})A=A=t({_{C}I})A$, then we can assume ${I^{C}}=C={_{C}I}$ without much loss of generality: \old
\begin{lemma} Let $\mathcal{A}=(A,C,s,t,\Delta)$ be a left multiplier bialgebroid with a two-sided ideal $C_{0}\subseteq
  C$ such that $s(C_{0})A=A=t(C_{0})A$.  Denote by $s_{0}$ and $t_{0}$ the restrictions of $s$ and $t$,
  respectively, to $C_{0}$. Then the natural map $A^{C_{0}} \otimes {_{C_{0}}A} \to \AlA$ is an
  isomorphism. Denote by $\Delta_{0}$ the composition of $\Delta$ with the induced isomorphism $\AltkA
    \to {A^{C_{0}}} \overline{\times} {_{C_{0}}A}$. Then  $\mathcal{A}_{0}:=(A,C_{0},s_{0},t_{0},\Delta_{0})$ is a left multiplier
    bialgebroid, and every left counit for $\mathcal{A}$ takes values in $C_{0}$ and is a left counit for $\mathcal{A}_{0}$.
\end{lemma}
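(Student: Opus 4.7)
The lemma has three assertions: the isomorphism of tensor products, the multiplier bialgebroid structure on $\mathcal{A}_{0}$, and the behaviour of left counits. I would attack them in that order, with the main content lying in the first.

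For the isomorphism, the natural quotient map $\pi\colon{_{B_{0}}A} \otimes A^{B_{0}} \to \bsA \otimes \btA$ is surjective since passing from $B_{0}$-balance to $B$-balance only imposes further relations. For injectivity, it is enough to show that every $B$-balance relation $s(x)a \otimes b - a \otimes t(x)b$ with $x\in B$ and $a,b\in A$ already vanishes in ${_{B_{0}}A} \otimes A^{B_{0}}$. Using $s(B_{0})A = A$, I would write $a = \sum_{i} s(x_{i})a_{i}$ with $x_{i}\in B_{0}$. Since $B_{0}$ is a two-sided ideal, each $xx_{i}$ lies in $B_{0}$, and the anti-multiplicativity of $t$ gives $t(xx_{i}) = t(x_{i})t(x)$. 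Balancing over $B_{0}$ on both sides,
\begin{align*}
s(x)a \otimes b = \sum_{i} s(xx_{i})a_{i} \otimes b = \sum_{i} a_{i} \otimes t(x_{i})t(x)b = \sum_{i} s(x_{i})a_{i} \otimes t(x)b = a \otimes t(x)b,
\end{align*}
as required.

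For the bialgebroid structure, I would check the axioms in turn. Axiom (A1) concerns $A$ alone and is unchanged. The restrictions $s_{0}$ and $t_{0}$ are an injective homomorphism and anti-homomorphism into $M(A)$ with commuting images. Idempotency of ${_{B_{0}}A}$ and $A^{B_{0}}$ is exactly the ideal hypothesis, and faithfulness holds because $s_{0}(x_{0})A = 0$ or $t_{0}(x_{0})A = 0$ forces $s(x_{0}) = 0$ or $t(x_{0}) = 0$ in $M(A)$ by non-degeneracy of $A_{A}$, hence $x_{0}=0$ by injectivity. Non-degeneracy of ${_{B_{0}}A} \otimes A^{B_{0}}$ over $A\otimes 1$ and $1\otimes A$ is transported along $\pi$ from the corresponding property of $\bsA \otimes \btA$. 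Finally, the induced isomorphism of endomorphism algebras identifies $\AltkA$ with ${_{B_{0}}A} \overline{\times} A^{B_{0}}$, and under this identification $\Delta_{0}$ and $\Delta$ coincide as linear maps. Consequently, the existence condition defining $\AltkA$, the bimodule identity \eqref{eq:left-delta-bimodule}, and the coassociativity diagram \eqref{dg:tltr-coassociative} for $\Delta_{0}$ are immediate specialisations of the corresponding properties of $\Delta$.

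For the counit claim, given a left counit $\varepsilon$ of $\mathcal{A}$, I would first show $\varepsilon(A)\subseteq B_{0}$. Writing $a = \sum_{i} s(x_{i})a_{i}$ with $x_{i}\in B_{0}$ as above, the bimodule identity \eqref{eq:left-counit-bimodule} yields $\varepsilon(a) = \sum_{i} x_{i}\varepsilon(a_{i}) \in B_{0}$, since $B_{0}$ is a two-sided ideal. The restricted map $\varepsilon\colon A \to B_{0}$ then inherits the bimodule identities for $s_{0}$ and $t_{0}$, and the counit identities \eqref{eq:left-counit} for $\mathcal{A}_{0}$ follow from those for $\mathcal{A}$ because the canonical maps $\Tl,\Tr$ of the two bialgebroids correspond under the isomorphism $\pi$. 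The main obstacle is the first step; once the isomorphism of tensor products is in place, the remaining verifications reduce to unwinding definitions.
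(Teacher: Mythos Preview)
Your proof is correct and follows essentially the same approach as the paper. Your computation for the isomorphism of tensor products is identical to the paper's --- decomposing $a = \sum_i s(x_i)a_i$ with $x_i \in B_0$ and exploiting that $xx_i \in B_0$ --- and the remaining verifications are handled by both you and the paper as straightforward consequences, though you spell them out in more detail than the paper does.
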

\begin{proof}
  We first show that the natural map $A^{C_{0}} \otimes {_{C_{0}}A} \to \AlA$ is an isomorphism.
  Given $a,b\in A$ and $x\in C$, we can write $a=\sum_{i} t(x_{i})a_{i}$ with $x_{i} \in C_{0}$ and $a_{i} \in
  A$, and then $t(x)a \otimes b - a \otimes s(x)b$ is equal to
  \begin{align*}
    \sum_{i} \left(t(xx_{i})a_{i} \otimes b - a_{i} \otimes s(xx_{i})b + a_{i} \otimes s(x_{i})s(x)b - t(x_{i})a_{i} \otimes
      s(x)b\right)
  \end{align*}
  and therefore lies in the space spanned by all elements of the form $t(x')a'\otimes b' - a'\otimes
  s(x')b'$, where $a',b'\in A$ and $x'\in C_{0}$.  The first assertion follows.

  It follows immediately that $\mathcal{A}_{0}$ is a left multiplier bialgebroid. 

  If $\varepsilon$ is a left counit for $\mathcal{A}$, then the assumption $A=s(C_{0})A$ and
  \eqref{eq:left-counit-bimodule} imply that $\varepsilon(A)$ is contained in $C_{0}$, and clearly,
  $\varepsilon$ also is a left counit for $\mathcal{A}_{0}$.
\end{proof}
\new
Now, we can prove the existence result: \old
\begin{proposition} \label{prop:left-counit-existence} Let  $(A,C,s,t,\Delta)$ be a left multiplier bialgebroid. If its  canonical maps $(\Tl,\Tr)$ are bijective and  \new $s({I^{C}})A=A=t({_{C}I})A$,\old then it has a unique left counit.
\end{proposition}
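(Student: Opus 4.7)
The plan is to address uniqueness and existence in turn; uniqueness is almost immediate from Proposition~\ref{prop:left-counit-props}(1), while existence requires constructing $\varepsilon$ from $\Tr^{-1}$ in the spirit of \cite{daele:0}, adapted to the algebroid setting.

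For uniqueness, I would first establish that the hypothesis makes $\mathcal{A}$ both left-full and right-full. Any element of $A=s(I^{t})A$ is a linear combination of terms $s(\psi(d))c$ with $\psi\in\Hom(\btA,\Bb)$ and $c,d\in A$; since $\Tr$ is surjective onto $\bsA\otimes\btA$, each simple tensor $c\otimes d$ lies in $\Tr(A\otimes A)$, so $s(\psi(d))c=(\id\otimes\psi)(\Tr(a'\otimes b'))$ for suitable $a',b'\in A$, giving left-fullness. Right-fullness follows symmetrically from $t(I^{s})A=A$ and surjectivity of $\Tl$. Proposition~\ref{prop:left-counit-props}(1) then yields uniqueness.

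For existence, I plan to define the auxiliary map $E_{\rho}:=m\circ\Tr^{-1}\colon\bsA\otimes\btA\to A$, where $m\colon\bAs\otimes\bsA\to A$ is well defined since $m(as(x)\otimes b)=as(x)b=m(a\otimes s(x)b)$. For each $a\in A$, the map $E_{a}\colon A\to A$, $c\mapsto E_{\rho}(a\otimes c)$, lies in $L(A)=\End(A_{A})$: since $\Tr$ is a right $(1\otimes A)$-module map, $\Tr^{-1}(a\otimes cd)=\Tr^{-1}(a\otimes c)(1\otimes d)$, hence $E_{a}(cd)=E_{a}(c)d$. The core step is to upgrade this to $E_{a}=t(\varepsilon(a))$ for a unique $\varepsilon(a)\in B$; injectivity of $t$ then produces $\varepsilon\colon A\to B$. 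For this I would combine the pentagonal relation for $\Tr$ from Proposition~\ref{prop:tltr-pentagon} with the $B$-bimodule property from Lemma~\ref{lemma:tltr}(2): slicing by maps $\psi\in\Hom(\btA,\Bb)$ produces relations characterizing $E_{a}$ as intertwining $s$- and $t$-actions on $A$, and the hypothesis $s(I^{t})A=t(I^{s})A=A$ (with $I^{s}=I^{t}$ by Lemma~\ref{lm:counit-isit}) supplies enough elements to force $E_{a}\in t(B)$ rather than merely $L(A)$. Given $\varepsilon$, the bimodule identities \eqref{eq:left-counit-bimodule} and $(\varepsilon\otimes\id)\Tr(a\otimes b)=ab$ are built into the construction; the companion identity $(\id\otimes\varepsilon)\Tl(a\otimes b)=ab$ follows either from a parallel construction using $\Tl^{-1}$ together with the uniqueness above, or directly from coassociativity~\eqref{dg:tltr-coassociative}.

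The main obstacle is exactly this core step of confining $E_{a}$ to $t(B)$ rather than the considerably larger algebra $L(A)$. In the multiplier-Hopf-algebra case treated in \cite{daele:0} the base is $\C$ and the analogous step is essentially trivial, whereas in the algebroid setting one must use the full strength of the hypothesis $s(I^{t})A=t(I^{s})A=A$ and track the various $B$-module structures through the pentagonal diagrams with care.
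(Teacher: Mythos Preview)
Your uniqueness argument and your construction of $E_{a}\in L(A)$ from $m\circ\Tr^{-1}$ are correct and match the paper's approach. The gap is in your plan for the core step: the pentagonal relation for $\Tr$ and the bimodule relations of Lemma~\ref{lemma:tltr}(2) only tell you how $E_{a}$ depends on $a$ with respect to the $s$- and $t$-actions (e.g.\ $E_{s(x)t(y)a}=t(y)E_{a}t(x)$), and how $E_{a}$ behaves multiplicatively in $a$. None of that constrains the \emph{value} $E_{a}\in L(A)$ to lie in the small subalgebra $t(B)$; a priori $E_{a}$ could be any left multiplier commuting appropriately with right multiplication.

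What the paper does, and what you are missing, is to bring in $\Tl^{-1}$ symmetrically: define a second map $\Es:=(m^{B})^{\op}\circ\Tl^{-1}$, obtain a second family $\es(a)\in L(A)$, and then use the coassociativity diagram \eqref{dg:tltr-coassociative} (not the pentagon) inverted on both sides to obtain the key relation
\[
\es(b)a\otimes c \;=\; a\otimes \et(b)c \quad\text{in }\bsA\otimes\btA.
\]
Slicing this with $\psi\in\Hom(\btA,\Bb)$ gives $s(\psi(c))\es(b)=s(\psi(\et(b)c))$, whence $s(I^{t})\es(A)\subseteq s(B)$; combined with the hypothesis $\es(A)=\es(s(I^{t})A)=s(I^{t})\es(A)\subseteq s(B)$. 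Symmetrically $\et(A)\subseteq t(B)$, and the same slicing shows $s^{-1}\circ\es=t^{-1}\circ\et=:\varepsilon$. Thus the interplay of the \emph{two} canonical maps via coassociativity is what pins $E_{a}$ down to $t(B)$; the pentagon for $\Tr$ alone does not do this. Your closing remark that $\Tl^{-1}$ might be needed only for the second counit identity understates its role: it is essential already for showing that $\varepsilon$ exists at all.
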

\begin{proof}
  Suppose that the assumptions hold.   Then uniqueness of a left counit follows from \ref{prop:left-counit-props}  (1).  To prove existence, consider the linear maps
  \begin{align*} \Es  &:= m_{B} \circ \Rr \colon \AlA \to A, & \Et &:= m_{B}^{\op} \circ \Rl \colon \AlA \to A,
  \end{align*}
Since  $ \Es(a \otimes bc) = \Es(a \otimes b)c$ for all $a,b,c\in A$ by \eqref{dg:tltr-module},
  the formula $\es(a)b:=\Es(a \otimes b)$ defines a map $\es \colon A \to L(A)$. By definition and by
  \eqref{eq:tltr-bimodule},
  \begin{align} \label{eq:et-module} \es(t(z)s(y)a)b &= (\mult \circ \Rr)(s(y)a \otimes s(z)b) = s(y) \es(a)s(z)b
  \end{align} 
  for all $y,z \in C$ and $a,b \in A$.  Likewise, the formula $\et(a)b:=\Et(b \otimes a)$ defines a map $\et
  \colon A \to L(A)$ satisfying
  \begin{align} \label{eq:es-module} \et(t(x)s(z)a)b = t(x)\et(a)t(z)b
  \end{align}
for all $x,z \in B$ and $a,b \in A$.

  Consider the following diagram:
  \begin{align*} \xymatrix@C=40pt@R=10pt{ & \AlAlA \ar[rd]^{\id\otimes \Rr} \ar[ld]_{\Rl \otimes \id} & \\ \ATAlA
      \ar[r]^(0.6){m_{B}^{\op} \otimes \id} \ar[rd]_{\id \otimes \Rr} &
      \AlA & \AlAsA \ar[l]_(0.6){\id \otimes m_{B}} \ar[ld]^{\Rl
        \otimes \id} \\ & \ATAsA. }
  \end{align*} 
The outer square and the lower cell commute by \eqref{dg:tltr-compatible} and \eqref{dg:tltr-coassociative}. Hence, the upper
cell commutes, showing that for all $a,b,c \in A$,
  \begin{align} \label{eq:es-et} \et(b)a \otimes c = \Et(a \otimes b) \otimes c &=a \otimes \Es(b \otimes c) = a \otimes \es(b)c \text{ in }
    \AlA.
  \end{align}
  Applying $\id \otimes \psi$ or $\phi \otimes \id$ with $\phi \in \Hom(\ctA,\Cc)$ and $\psi \in \Hom(\csA,\cC)$, we
  obtain
  \begin{align*} t(\psi(c))\et(b)a &= t(\psi(\es(b)c))a &&\text{and} & s(\phi(\et(b)a))c &= s(\phi(a))\es(b)c.
  \end{align*} 
Let us focus on the first equation. Since $a \in A$ is arbitrary, we can
  conclude $t(\psi(c))\et(b)=t(\psi(\es(b)c))$ for all $b,c\in A$ and hence
   $t({_{C}I})\et(A) \subseteq t({_{C}I})$. Using the assumption and equation
  \eqref{eq:es-module}, we conclude $\et(A)=\et(t({_{C}I})A) = t({_{C}I})\et(A) \subseteq
  t({_{C}I})$.  A similar argument applied to the second equation shows that $\es(A) \subseteq
  s({I^{C}})$. In particular, we get
  \begin{align*} (t^{-1}\circ \et(b))\psi(c)=\psi(\es(b)c) = (s^{-1}\circ \es(b))\psi(c) \quad \text{for
      all } b,c\in A, \psi\in \Hom(\cA,\cC).
  \end{align*}
  Using Lemma \ref{lm:counit-isit}, we can conclude that $s^{-1}\circ \es=t^{-1}\circ \et$, and this map is  a left counit by construction.
\end{proof}

\section{Right multiplier bialgebroids}
\label{section:right}
The definitions and results of sections \S\ref{section:left} and \S\ref{section:left-counits} have
natural right-handed analogues which are briefly summarized below and will be needed for the
definition of multiplier Hopf algebroids in section \S\ref{section:hopf}. For proofs, explanations
and comments, we refer to the corresponding left-handed versions.

Let $A$ be an algebra, not necessarily unital.  We write $_{A}A$ when we regard $A$ as a left
$A$-module, and assume that this module is non-degenerate and idempotent. Denote by
$R(A)=\End({_{A}A})^{\op}$ the algebra of right multipliers of $A$, and write the application of a
$T\in R(A)$ to an $a\in A$ as $aT$, so that $a(TS)=(aT)S$ for all $a\in A$ and $S,T \in
R(A)$. Then $A$ embeds into $R(A)$ and we can form the multiplier algebra $M(A) \subseteq R(A)$. If
$A$ is unital, then the map $M(A) \to A$ given by $T \mapsto 1_{A}T$ is an isomorphism.

Let $B$ be an algebra with a homomorphism $s\colon B\to M(A) \subseteq R(A)$ and an
anti-homomorphism $t\colon B \to M(A) \subseteq R(A)$ such that the images of $s$ and $t$ commute.
We write $\bAs$ and $\bAt$ if we regard $A$ as a right or left $B$-module such that $a\cdot y = as(y)$
or $y\cdot a = at(y)$ for all $a\in A$ and $y\in B$. We similarly write $\bsA$ and $\btA$ when we use multiplication on
the left hand side instead of the right hand side.

Assume that  the tensor product $\ArA$ is non-degenerate as a left module over $A\otimes 1$ and over $1\otimes A$, respectively. We consider the opposite algebra  $\End(\ArA)^{\op}$ and write $(a \otimes b)T$ for the
image of an element $a\otimes b$ under an element $T \in \End(\ArA)^{\op}$, so that $(a\otimes b)(ST) = ((a\otimes b) S)T$ for all
$a,b\in A$ and $S,T\in \End(\ArA)^{\op}$.
Denote by
\begin{align*}
  \ArtkA \subseteq \End(\ArA)^{\op}
\end{align*}
the subspace formed by all endomorphisms $T$ such that for all $a,b\in
A$, there exist elements $(a\otimes 1)T \in \ArA$ and $(1
\otimes b)T \in \ArA$ such that
\begin{align*}
  (a\otimes b)T = (1\otimes b)((a\otimes 1)T) = (a\otimes 1)((1 \otimes b)T).
\end{align*}
This subspace is a subalgebra.  If $A$ has a unit $1_{A}$, then the
map $T \mapsto (1_{A} \otimes 1_{A})T$ identifies this algebra with the 
right Takeuchi product
\begin{align} \label{eq:right-takeuchi}
\new \Ab \times \Asb = \old  \left\{ w \in \ArA :  (s(y) \otimes 1)w  = (1 \otimes t(y))w
    \text{ for all  } y\in B\right\} \subseteq \ArA.
\end{align}
\begin{definition}
  A \emph{right multiplier bialgebroid} is a tuple $(A,B,s,t,\Delta)$ consisting of
  \begin{enumerate}
  \item algebras $A$ and $B$ such that the left $A$-module $_{A}A$ is non-degenerate and idempotent;
  \item a homomorphism $s\colon B \to M(A) \subseteq R(A)$ and an anti-homo\-morphism $t \colon B \to M(A) \subseteq R(A)$ such that the images of $s$ and $t$ commute, the $B$-modules $\bAs$ and $\bAt$ are faithful and idempotent, and $\ArA$ is non-degenerate as a left module over $A \otimes 1$ and over $1 \otimes A$;
\item a homomorphism $\Delta \colon A \to \ArtkA$, called the \emph{right
    comultiplication},  satisfying
  \begin{align} \label{eq:right-delta-bimodule}
    \Delta(t(y)s(x)at(y')s(x')) &= (t(y) \otimes s(x))\Delta(a) (t(y') \otimes s(x')), \\
    (a \otimes 1 \otimes 1) ((\Delta \otimes \id)((1 \otimes
    c)\Delta(b))) &= (1 
\otimes 1 \otimes c)((\id \otimes \Delta)((a \otimes 1)\Delta(b))) \label{eq:right-delta-co-associative}
  \end{align}
for all $a,b,c\in A$ and $x,y\in B$.
\end{enumerate}
We call a right multiplier bialgebroid as above unital if the algebras $A,B$ and the maps $s,t,\Delta$ are unital.
\end{definition}

Let $(A,B,s,t,\Delta)$ be a right multiplier bialgebroid. Then the linear maps
\begin{align*}
  \widetilde{\lT} & \colon A \otimes A \to \ArA, \quad a\otimes b\mapsto (a\otimes 1)\Delta(b), \\
\widetilde{\rT}& \colon A \otimes A \to \ArA, \quad a\otimes b\mapsto (1\otimes b)\Delta(a),
\end{align*}
satisfy the following analogues of relations \eqref{eq:tltr-welldefined} and \eqref{eq:tltr-bimodule},
\begin{align} \label{eq:ltrt-bimodule}
  \begin{aligned}
    \widetilde{\lT}(as(z) \otimes s(x)bt(y')s(x')) &= (1 \otimes
    t(z)s(x))\widetilde{\lT}(a\otimes b)(t(y') \oo s(x')), \\
    \widetilde{\rT}(t(y)at(y')s(x') \otimes bt(z)) &= (t(y)s(z)
    \otimes 1)\widetilde{\rT}(a \otimes b)(t(y') \otimes s(x')),
  \end{aligned}
\end{align}
and make the following analogues of \eqref{dg:tltr-compatible}, \eqref{dg:tltr-module}, \eqref{dg:tltr-multiplicative} and \eqref{dg:tltr-coassociative} commute:
\begin{gather} \label{dg:ltrt-compatible}
\xymatrix@R=0pt@C=40pt{
  & \ArA \otimes A \ar[rd]^(0.55){\id \otimes m^{\op}}  & \\
A \otimes A \otimes A \ar[ru]^{ \widetilde{\lT} \otimes \id} \ar[rd]_{\id \otimes
  \widetilde{\rT}} & & \ArA \\
& A \otimes \ArA,  \ar[ru]_(0.55){m\otimes \id} } \\
\label{dg:ltrt-module}
\xymatrix@C=15pt@R=15pt{A \otimes A \otimes A  \ar[r]^(0.45){\id \otimes \widetilde{\lT}}
  \ar[d]_{m \oo \id} & A \otimes \ArA \ar[d]^{m \otimes
    \id} & A \otimes A  \otimes A \ar[r]^(0.45){\widetilde{\rT} \otimes \id} \ar[d]_{\id
    \otimes m^{\op}}
  & \ArA \otimes A \ar[d]^{\id \otimes m^{\op}} \\
  A\otimes A \ar[r]^{\widetilde{\lT}} & \ArA, & A\otimes A
  \ar[r]^{\widetilde{\rT}} & \ArA, \\  
A \otimes A \otimes A
      \ar[r]^(0.6){\id \otimes m^{\op}} \ar[d]_{(\widetilde{\lT})_{13}}
      & A \otimes A \ar[dd]^{\widetilde{\lT}} & A \otimes A
      \otimes A \ar[r]^(0.6){m \otimes \id} \ar[d]_{(\widetilde{\rT})_{13}} & A \otimes A \ar[dd]^{\widetilde{\rT}} \\
\bAs \oo A \oo \bAt \ar[d]_{(\widetilde{\lT})_{12}} & & \bAs\otimes A \otimes \bAt
      \ar[d]_{(
        {\widetilde{\rT}})_{23}} \\  
\bAs \oo {^{B}A^{B}} \oo \bAt \ar[r]_(0.6){ \id \otimes m^{\op}_{B}} & \ArA &
\bAs \oo {_{B}A_{B}} \oo \bAt \ar[r]_(0.55){m_{B} \otimes \id}
      & \ArA }  \\ \label{dg:ltrt-coassociative} \xymatrix@R=18pt{ A \otimes A\otimes A \ar[r]^(0.45){\id
     \otimes \widetilde{\rT}} \ar[d]_{\widetilde{\lT} \otimes \id} &
A \otimes \ArA \ar[d]^{\widetilde{\lT} \otimes \id} \\ \ArA \otimes A \ar[r]^(0.45){\id
  \otimes\widetilde{\rT}} & \ArArA.  }
\end{gather}
\begin{notation} \label{notation:right} Similarly as in Notation \ref{notation:left}, we choose an algebra $C$ with an anti-isomorphism $\kappa \colon C \to B$, use this to regard $A$ as a $C$-module in various ways, and write
  \begin{align*}
    \Ac \oo \cA \quad \text{and} \quad \ctA \oo \cAt
  \end{align*}
  for the quotients of $A\oo A$ by the subspaces spanned by all elements of the form $t(\kappa(x))a \oo b - a \oo bt(\lambda(x))$ in the first case and $as(\kappa(x)) \oo b - a \oo s(\kappa(x))b$ in the second case. Again, the choice $C$ and $\kappa$ is irrelevant here and will only be fixed from Section \ref{section:hopf} on.  Note that the flip map $\Sigma_{(A,A)}$ \new descends \old to isomorphisms
\begin{align} \label{eq:flips}
  \Sigma_{(\Ab,\bA)} \colon \Ab\oo \bA \to A^{C} \oo {^{C}A} \quad \text{and} \quad
  \Sigma_{(\btA,\bAt)} \colon \btA \oo \bAt \to \Ac \oo \cA.
\end{align}
\end{notation}

In the notation above, the maps $\widetilde{\lT}$ and $\widetilde{\rT}$ \new descend \old to maps
\begin{align*}
\lT &\colon \cAs \otimes \csA \to \ArA &&\text{and} &
\rT &\colon \ctA \otimes \cAt \to \ArA,
\end{align*}
respectively, and  these maps satisfy pentagonal relations similar to those
given in Proposition \ref{prop:tltr-pentagon} if the corresponding assumptions hold.

Given a right multiplier bialgebroid $\mathcal{A}=(A,B,s,t,\Delta)$, one can, similarly as in the case of left multiplier bialgebroids  (see Proposition
\ref{prop:left-co-opposite}), reverse the
comultiplication and obtain a \emph{co-opposite} right multiplier bialgebroid
\begin{align*}
  \mathcal{A}^{\co} = (B^{\op},t,s,\Delta^{\co}).
\end{align*}
One can also reverse the multiplication of the underlying algebra $A$ to pass between left and right
multiplier bialgebroids as follows.
\begin{proposition} \label{prop:left-opposite} 
Let $\mathcal{A}=(A,C,s,t,\Delta)$ be a left
  multiplier bialgebroid.  Regard $s$ as an anti-homomorphism and $t$ as a homomorphism,
  respectively, from $C$ to $M(A^{\op})\subseteq R(A^{\op})$,  and write
  $(A^{\op})_{C}$ and $^{C}(A^{\op})$ for $A^{\op}$, regarded as a $C$-module
  via $x  \cdot a^{\op} =(s(x)a)^{\op}$ and $a^{\op} \cdot x =(t(x)a)^{\op}$, respectively. 
Then the map $a\otimes b\mapsto a^{\op} \otimes b^{\op}$ descends to a linear isomorphism
  \begin{align*}
\AlA \to (A^{\op})_{C} \otimes {^{C}(A^{\op})}, \quad w\mapsto w^{\op \otimes \op},    
  \end{align*}
there exists a  well-defined homomorphism
\begin{align*}
  \Delta^{\op} \colon A^{\op} \to (A^{\op})_{C} \overline{\times} {^{C}(A^{\op})}, \quad
  ((b^{\op}\otimes c^{\op})\Delta^{\op}(a^{\op})) &:=(\Delta(a)(b\otimes c))^{\op\otimes \op},
\end{align*}
and $\mathcal{A}^{\op}:=(A^{\op},C,t,s,\Delta^{\op})$
 is a right multiplier bialgebroid.  Its
  associated maps $\widetilde{\lT}^{\op}$ and $\widetilde{\rT}^{\op}$ are given by
 \begin{align*}
   \widetilde{\lT}^{\op}(a^{\op} \otimes b^{\op}) &= (\widetilde{\Tl}(a\otimes b))^{\op\otimes \op}, &
   \widetilde{\rT}^{\op}(a^{\op} \otimes b^{\op}) &= (\widetilde{\Tl}(a\otimes b))^{\op \otimes \op}.
 \end{align*}
 Conversely, for every right multiplier bialgebroid $\mathcal{A}$, there exists a unique left
 multiplier bialgebroid $\mathcal{A}^{\op}$ such that $\mathcal{A}=(\mathcal{A}^{\op})^{\op}$.
\end{proposition}
The notion of a counit carries over as follows.

Given a right multiplier bialgebroid $(A,B,s,t,\Delta)$ and morphisms $\phi \in \Hom(\bAt,\bB)$ and
$\psi \in \Hom(\bAs,\Bb)$, we can form slice maps
\begin{align*} \phi \new \odot \old \id &\colon \ArA \to A, \ a \otimes b \mapsto bs(\phi(a)), & \id \new \odot \old \psi &\colon \ArA \to A, \ a
  \otimes b \mapsto at(\psi(b)).
\end{align*}
\begin{definition}
  A \emph{right counit} for a right multiplier
bialgebroid $\mathcal{A}=(A,B,s,t,\Delta)$ is a map $\varepsilon
\colon A\to B$ that satisfies 
\begin{align}
  \label{eq:right-counit-bimodule}
  \varepsilon(at(y)) &= ya &&\text{and} &
  \varepsilon(as(x)) &= ax &\text{for all } a\in A,x,y\in B,
\end{align}
that is, $\varepsilon \in \Hom(\bAs,\Bb) \cap
\Hom(\bAt,\bB)$\sindex{epsilon}{$\varepsilon$}{left/right counit}, and
    \begin{align} \label{eq:right-counit} 
(\varepsilon \new \odot \old \id)(_{\rho} T(a\otimes b)) &=ba &&\text{and} &   (\id \new \odot
\varepsilon\old)(_{\lambda} T(a\otimes b)) &=ab && \text{for all } a,b \in A.
\end{align} 
\end{definition}
One easily verifies that right counits for a right multiplier bialgebroid
$\mathcal{A}=(A,B,s,t,\Delta)$ coincide with right counits for the co-opposite $\mathcal{A}^{\co}$
up the canonical linear identification of $B$ with $B^{\op}$, and with left counits for the opposite
$\mathcal{A}^{\op}$ up the canonical linear identification of $A$ with $A^{\op}$.

Proposition \ref{prop:left-counit-props}, Lemma \ref{lemma:left-counit-image} and Proposition
\ref{prop:left-counit-existence} have the following right-handed counterparts. If
$\mathcal{A}=(A,B,s,t,\Delta)$ is a right multiplier bialgebroid, then
\begin{enumerate}
\item a right counit for $\mathcal{A}$ is unique if $\mathcal{A}$ is left- or right-full in a sense
  similar as it was defined for left multiplier bialgebroids in Definition \ref{df:full};
\item if the map $\rT$ (or $\Tl$) is surjective, then
  \begin{align}
    \label{eq:right-counit-multiplicative}
    \varepsilon(ab) &= \varepsilon(s(\varepsilon(a))b) \quad (\text{or }
    \varepsilon(ab)=\varepsilon(t(\varepsilon(a))b), \text{ respectively});
  \end{align}
\item without much loss of generality, one can assume $\varepsilon$ to be surjective;
\item if the maps $(\lT,\rT)$ are bijective and $A=As(^{t}I)=At(^{s}I)$, where $^{s}I,^{t}I
  \subseteq B$ denote the linear span of the images of all $\phi \in \Hom(\bAs,\Bb)$ and $\psi\in
  \Hom(\bAt,\bB)$, respectively, then $\mathcal{A}$ has a unique right counit.
\end{enumerate}
In the unital case, right multiplier bialgebroids with right counits satisfying the equations in
\eqref{eq:right-counit-multiplicative} correspond to right bialgebroids \cite{boehm:hopf},
\cite{boehm:bijective}, that is, an analogue of Proposition \ref{prop:left-unital} holds.

\new Given a right multiplier bialgebroid $(A,B,s,t,\Delta)$, we use the generalized Sweedler notation as well, but 
put the subscripts in brackets, so that
\begin{align*}
  \Delta(a) &= a_{[1]} \otimes a_{[2]}, &
  \lT(a\otimes b) &= ab_{[1]} \otimes b_{[2]}, & \rT(a\otimes b) &= a_{[1]} \otimes ba_{[2]}
\end{align*}
and so on. \old
\section{Multiplier Hopf algebroids}

\label{section:hopf}

We now come to the main part of this article, where the left- and the right-handed concepts
introduced above get assembled into a two-sided \new structure.  

In detail, \old a multiplier bialgebroid will be given by a left multiplier
bialgebroid and a right multiplier bialgebroid
\begin{align*}
  \mathcal{A}_{C}=(A,C,s_{C},t_{C},\Deltalt) \quad \text{and} \quad
  \mathcal{A}_{B}=(A,B,s_{B},t_{B},\Deltart),
 \end{align*}
respectively,
 subject to the following assumptions.

First, \new $\mathcal{A}_{C}$ and $\mathcal{A}_{B}$ \old have the same underlying total algebra $A$. By assumption, this algebra is
non-degenerate on the left and on the right, so that we can form the two-sided multiplier algebra
$M(A)$ which is the target of the maps $s_{B},t_{B},s_{C},t_{C}$.

The second assumption will be used to make sense of the third one,
and    reads
\begin{align} 
  s_{B}(B)  &= t_{C}(C), & t_{B}(B)&=s_{C}(C).
\end{align}
Then the maps $S_{B} := t_{C}^{-1} \circ s_{B} \colon B\to C$ and $S_{C} := t_{B}^{-1} \circ s_{C}
\colon C\to B$ are anti-isomorphisms, but not necessarily inverse to each other.  To simplify
notation, we shall identify $B$ with the image $s_{B}(B)$ and $C$ with the image $s_{C}(C)$, that
is, we assume $s_{B}=\id_{B}$, $s_{C}=\id_{C}$ and write $S_{B}$ and $S_{C}$ for $t^{-1}_{C}$ and
$t^{-1}_{B}$, respectively. Furthermore, we denote elements of $B$ by $x,x',x'',\ldots$ and elements of
$C$ by $y,y',y'',\ldots$, and write $\bA, \ \Ab$ if we regard $A$ as a left or right module over $B$
via left or right multiplication, and $\sbA, \ \Asb$ if we regard $A$ as a right or left module over
$B$ via $a\cdot x = t_{B}(x)a$ or $x\cdot a=at_{B}(x)$, respectively. Likewise, we use the notation
$\cA,\ \Ac,\ \scA,\ \Asc$, respectively.

To formulate the third assumption, observe that $C$-bilinearity of $\Deltalt$, see
\eqref{eq:left-delta-bimodule}, and $B$-bilinearity of $\Deltart$, see
\eqref{eq:right-delta-bimodule}, now take the form
\begin{align} \label{eq:hopf-delta-bimodule} 
  \begin{aligned}
    \Deltalt(xyax'y') &= (y \otimes x)\Deltalt(a)(y' \otimes x'),
    & \Deltart(xyax'y') &= (y \otimes x)\Deltart(a)(y' \otimes x')
  \end{aligned}
\end{align} 
for all $a\in A$, $x,x' \in B$, $y,y' \in C$. Similarly, one can
rewrite the relations \eqref{eq:tltr-bimodule} and
\eqref{eq:ltrt-bimodule} for the canonical maps $\widetilde{\Tl},\widetilde{\Tr}$ and
$\widetilde{\lT}, \widetilde {\rT}$. Now, the third assumption is the following  mixed co-associativity,
 \begin{align} \label{eq:compatible}
     \begin{aligned} ((\Deltalt \otimes \id)((1 \otimes c)\Deltart(b)))(a \otimes 1 \otimes 1) &= (1
\otimes 1 \otimes c)((\id \otimes \Deltart)(\Deltalt(b)(a \otimes 1))), \\ (a \otimes 1 \otimes 1)((\Deltart
\otimes \id)(\Deltalt(b)(1 \otimes c))) &= ((\id \otimes \Deltalt)((a \otimes 1)\Deltart(b)))(1 \oo
1 \otimes c)
     \end{aligned}
\end{align} for all $a,b,c \in A$, which amounts to
commutativity of the following
diagrams,
\begin{align} \label{dg:compatible} 
  \begin{aligned}
    \xymatrix@C=20pt@R=15pt{ A\otimes A\otimes A \ar[r]^(0.45){\id \otimes \widetilde{\Tr}}
      \ar[d]_{\widetilde{\lT} \otimes \id} & A\otimes \AlA \ar[d]^{\widetilde{\lT} \otimes
        \id} \\ \ArA \otimes A \ar[r]^(0.45){\id \otimes \widetilde{\Tr}} & \bAs \oo {^{B}A^{C}} \oo \csA,
}      \qquad
   \xymatrix@C=20pt@R=15pt{
      A\otimes A\otimes A \ar[r]^(0.45){\id \otimes \widetilde{\rT}} \ar[d]_{\widetilde{\Tl} \otimes \id}
      & \ar[d]^{\widetilde{\Tl} \otimes \id} A\otimes \Asc \otimes \Ac \\ \AlA
      \ar[r]^(0.45){\id \otimes \widetilde{\rT}} & \AlArA,  }
  \end{aligned}
\end{align} \new
and in Sweedler notation to the relations
\begin{align}
(a_{(1)})_{[1]} \otimes (a_{(1)})_{[2]} \otimes a_{(2)} &= a_{[1]} \otimes (a_{[2]})_{(1)} \otimes (a_{[2]})_{(2)}, \label{eq:sweedler-mixed-coassociative-1} \\
  a_{(1)} \otimes (a_{(2)})_{[1]} \otimes (a_{(2)})_{[2]} &= (a_{[1]})_{(1)} \otimes (a_{[1]})_{(2)} \otimes a_{[2]} \label{eq:sweedler-mixed-coassociative-2} 
\end{align}
for all $a\in A$. \old
\begin{definition} \label{definition:mult-hopf-algebroid} A \emph{multiplier bialgebroid}
 $\mathcal{A}=(A,B,C,t_{B},t_{C},\Deltart,\Deltalt)$ consists of
\begin{enumerate}
\item a non-degenerate, idempotent algebra $A$,
\item subalgebras $B,C \subseteq M(A)$ with anti-isomorphisms $t_{B} \colon
  B\to C$ and $t_{C} \colon C\to B$, 
\item maps $\Deltalt\colon A \to \AltkA$ and $\Deltart \colon A \to \ArtkA$
\end{enumerate}
  such that
\begin{enumerate} \setcounter{enumi}{3}
\item $\mathcal{A}_{B}=(A,B,\id_{B},t_{B},\Deltart)$ is a right multiplier bialgebroid,
\item  $
\mathcal{A}_{C}= (A,C,\id_{C},t_{C},\Deltalt)$
is a left multiplier bialgebroid, and
\item   the mixed co-associativity conditions \eqref{eq:compatible} hold.
\end{enumerate}
We call left counits of $\mathcal{A}_{C}$ and right counits of $\mathcal{A}_{B}$ just left and right
counits, respectively, of $\mathcal{A}$. Likewise, we call the canonical maps $\Tl,\Tr$ of
$\mathcal{A}_{C}$ and $\lT,\rT$ of $\mathcal{A}_{B}$ just the canonical maps of $\mathcal{A}$.

\new
We call such a multiplier  bialgebroid $\mathcal{A}$ \emph{unital} if the  algebras $A,B,C$,
the inclusions $B,C\hookrightarrow A$ and the maps $\Deltalt,\Deltart$ are unital, that is, if $\mathcal{A}_{B}$ and $\mathcal{A}_{C}$ are unital. \old
\end{definition}
\new Note that we do not assume existence of counits. \old

To establish our main result and the key properties multiplier bialgebroids, we need to perform a fair amount of calculations involving the associated canonical maps.  We present these calculations and the key relations satisfied by the canonical maps in the form of commutative diagrams, where one can verify that all of the maps involved are well-defined on the underling tensor products.
Additionally, we write out the key relations in the generalised Sweedler notation \new  introduced at the end of    \S\ref{section:left} and of \S\ref{section:right}, respectively.  \old
\begin{definition} \label{definition:antipode}
An \emph{antipode} for a   multiplier bialgebroid $\mathcal{A}=(A,B,C,t_{B},t_{C}$, $\Deltart,\Deltalt)$  is a linear map $S \colon A \to \new M(A)\old$  satisfying the following conditions:
  \begin{enumerate}
   \item $S$ is an anti-homomorphism \new  such that $S(A)A=A=AS(A)$\old;
  \item the extension of $S$ to $M(A)$ satisfies $S\circ t_{B} = \id_{B}$ and $S\circ t_{C} = \id_{C}$;
  \item there exist a left counit $\epslt$ and a right counit $\epsrt$ for $\mathcal{A}$ such that the following diagrams commute, where the unlabelled maps are given by multiplication:
    \begin{align} \label{dg:antipode}
      \xymatrix@C=0pt@R=15pt{A\otimes A \ar[d]_{\widetilde{T_{\rho}}}
        \ar[rr]^{\epsrt \otimes \id} && B \otimes  A \ar[r] & A, & \quad& A\otimes A
        \ar[d]_{\widetilde{_{\lambda} T}} \ar[rr]^{\id \otimes \epslt} && A \otimes C \ar[r]
        &A. \\
        \AlA \ar[rrr]_{S \otimes \id} & \qquad&& \ar[u]  M(A)_{C}
        \otimes \cA && \ArA \ar[rrr]_{\id \otimes S} & \qquad && \ar[u]
        \Ab \otimes {_{B} M(A)}}
    \end{align}
  \end{enumerate}
  We call such an antipode \emph{invertible} if it maps $A$ bijectively to $A \subseteq M(A)$.
\end{definition}
Condition (1)  is equivalent to saying that $S$ is a morphism of bimodules
\begin{align*}
  S \colon {^{B}A^{B}} \to {_{B}M(A)_{B}} \quad\text{and} \quad
  S \colon {^{C}A^{C}} \to {_{C}M(A)_{C}},
\end{align*}
and
in Sweedler notation, commutativity of the diagrams \eqref{dg:antipode} amounts to the equations
\begin{align} \label{eq:sweedler-antipode} \epsrt(a)b &= S(a_{(1)})a_{(2)}b, & a\epslt(b) &= ab_{[1]}S(b_{[2]})
  \end{align} 
for all $a,b \in A$.  

\new We now come to the first main result of this article. Our proof follows a similar strategy as  the proof of the implication (iv)$\Rightarrow$(i) of Proposition 4.2 in \cite{boehm:bijective}, but will be purely diagrammatic. 
\begin{proposition} \label{proposition:canonical-antipode}
  Let $\mathcal{A}$ be a multiplier bialgebroid. Suppose that it has counits and that its canonical maps $\lT$ and $\Tr$ are bijective. Then it has a unique antipode $S$. \old
\end{proposition}
\begin{proof}
 Consider the compositions
  \begin{align*} \Sr &\colon \AlA \xrightarrow{\Tr^{-1}}
    \Ab \otimes \bA
\xrightarrow{\epsrt \otimes \id} \Bb \otimes \bA \to A, \\ \lS
&\colon \AlA
\xrightarrow{\lT^{-1}} \Ac \otimes \cA \xrightarrow{\id \otimes \epslt} \Ac \otimes \cC \to A.
  \end{align*} 
  If $S$ is an antipode for $\mathcal{A}$, then \ref{definition:antipode} (3) implies that $S(a)b = \Sr(a \otimes b)$ and $aS(b) = \lS(a \otimes b)$ for all $a,b\in A$. Therefore, the antipode is unique. Let us prove existence. The maps $\Sr$ and $\lS$ satisfy $a\Sr(b \otimes c)= \lS(a \otimes b)c$ for all $a,b,c\in A$ because the following diagram commutes,
 \begin{align} \label{eq:antipode} \xymatrix@C=40pt@R=15pt{ 
\ArAlA
\ar[r]^{\id \otimes \Tr^{-1}} \ar[d]_{\lT^{-1} \otimes \id}& 
\Ab \otimes {^{B}A_{B}} \otimes \bA \ar[d]_{\lT^{-1} \otimes \id}
\ar[r]^{\id \otimes \epsrt \otimes \id} & \Ab \oo {_{B}B_{B}} \oo \bA \ar[d]
 \\ \Ac \otimes {_{C}A^{C}} \otimes \csA
\ar[r]^{\id \otimes \Tr^{-1}} \ar[d]_{\id \otimes \epslt \otimes
  \id} & \Ac \otimes \cA_{B} \otimes \bA \ar[r]^{\mult_{C} \oo
\id} \ar[d]^{\id \otimes \mult_{B}} & \Ab \otimes \bA
 \ar[d]^{\mult_{B}} \\ \Ac \oo {_{C}C_{C}} \oo \cA  \ar[r] & \Ac \otimes \cA
\ar[r]^{\mult_{C}} & A. }
  \end{align}   
 Consequently, there exists a linear map $S \colon A \to M(A)$ such
  that
  \begin{align*}
  S(b)c = \Sr(b\otimes c) \quad \text{and} \quad aS(b) = \lS(a \otimes b)  
  \end{align*}
 for
  all $a,b,c\in A$. 
By construction,
  \begin{align} \label{eq:antipode-bimodule-2}
    \begin{aligned}
      S(t_{C}(y)a)b &= \Sr(t_{C}(y)a \oo b) = \Sr(a \oo yb) = S(a)yb, \\
      aS(bt_{B}(x)) &= \Sl(a \otimes bt_{B}(x)) = \Sl(ax \otimes b) =
      axS(b)
    \end{aligned}
  \end{align}
 for all $x\in B$, $y\in C$ and $a,b\in A$.

To see that $S$ is an anti-homomorphism, observe that  the following diagram commutes,
\begin{align*} \xymatrix@C=25pt@R=15pt{ 
A^{C} \oo {^{C}A^{C}} \oo \cA \ar[rrr]^(0.6){m_{C}^{\op} \otimes \id}
\ar[d]_{(\Rr)_{23}} &&& \AlA \ar[d]^{\Rr}  \\ 
 A^{C} 
\otimes {A_{B}} \otimes  {_{B,C}A} \ar[d]_{\id \oo \epsrt  \oo
\id} \ar[r]^(0.52){(\Rr)_{13}} & A_{B}^{C}\otimes \cAt \otimes \bA \ar[rr]^(0.6){m_{C}^{\op} \otimes \id}
\ar[d]_{ \id \otimes S_{B}\circ\epsrt \oo \id} && \Ab \otimes \bA 
\ar[d]^{\epsrt \otimes \id} \\ 
 A^{C} \oo \Bb \oo {_{B,C}A} \ar[d]
& A^{C}_{B} \oo \cC \oo \bA  \ar[d] && \Bb \oo \bA \ar[d]\\
 \bA \otimes \sbA
\ar[r]^(0.6){\Rr} & \Ab \otimes \bA \ar[r]^{\epsrt\otimes \id} & \Bb \oo \bA \ar[r]& A.
 }
\end{align*} 
Indeed, the upper rectangle commutes by \eqref{dg:tltr-multiplicative}, and the lower right square commutes because $\epsrt(ab)=\epsrt(\epsrt(a)b)$ for all $a,b\in A$; see \eqref{eq:right-counit-multiplicative}. Thus, $\Sr(ba)c = \Sr(a)\Sr(b)c$ for all $a,b,c\in
A$.

We  claim that $AS(A)=A=S(A)A$. Indeed, diagram \eqref{eq:antipode} shows that
\begin{align*} 
(\id \otimes \Sr)(\ArAlA) &=
  \Ab \otimes \bA, & (\lS \otimes \id)(\ArAlA)
&= \Ac \otimes \cA
\end{align*} 
because the maps $\Tr^{-1},\lT^{-1}$ and $m$ are surjective.  We apply $\epsrt \otimes \id$ or $\id \otimes
\epslt$, respectively,  use Lemma \ref{lm:counit-isit} and \eqref{eq:antipode-bimodule-2}, and  conclude
\begin{align*}
A=  \epsrt(A)A \subseteq
\Sr(\AlA)&= S(A)A, & A= A\epslt(A)
\subseteq \lS(\ArA)&=AS(A).
\end{align*}

Finally, the diagrams in \eqref{dg:antipode} commute by construction.
\end{proof}

\new
Conversely, given a multiplier  bialgebroid $\mathcal{A}$ with an antipode $S$, we shall construct an inverse to the canonical map $\Tr$. A similar construction will give the inverse of $\lT$.
\begin{lemma} \label{lemma:trdag}
 There exists a  unique linear map
\begin{align*}
  \Tr^{\dag} \colon \AlA \to \Ab \otimes \bA
\end{align*}
  such that the following diagrams commute:
  \begin{align} \label{dg:trdag}
    \xymatrix@R=20pt@C=45pt{ \scA  \oo \Asc \oo A \ar[r]^(0.55){\id \otimes m(S\otimes\id)} \ar[d]_{\rT \otimes \id}
    & \AlA  \ar[d]^{\Tr^{\dag}} &     \Ac \oo {_{C}\scA} \oo \cA \ar[r]^{\id \otimes \Tr^{\dag}} \ar[d]_{\lT \otimes \id} &  \Ac \oo  \cA_{B} \oo \bA \ar[d]^{m\otimes \id}
    \\ \ArA \oo A   \ar[r]^(0.55){\id \otimes m(S\otimes\id)} & \AbA &
       \Ab \oo {^{B}A^{C}} \oo \cA \ar[r]^(0.58){\id \otimes m(S\otimes \id)} &  \AbA. 
}
  \end{align}
\end{lemma}
In the unital case, we could just let $\Tr^{\dag}=(\id \otimes S) \circ \rT$ so that $\Tr^{\dag}(a) = a_{[1]} \otimes S(a_{[2]})$. In the non-unital case, this relation is not well-defined but captured by  commutativity of the diagrams above. Indeed, in Sweedler notation, these diagrams amount  to the relations
\begin{align*}
  \Tr^{\dag}(a \otimes S(b)c) =  a_{[1]} \otimes S(ba_{[2]})c \quad \text{and} \quad
ab_{[1]} \otimes S(b_{[2]})c = (a \otimes 1)\Tr^{\dag}(b \otimes c).
\end{align*}
\begin{proof}[Proof of Lemma \ref{lemma:trdag}]
Let $\Sr=m(S\otimes \id)$ as before.
 To prove existence of a map $\Tr^{\dag}$ that makes the first diagram commute, we need to show that whenever we have an element $\omega =\sum_{i} b_{i} \otimes c_{i} \otimes d_{i} \in    \scA \otimes \Asc \otimes A$ such that
\begin{align*}
(\id \otimes \Sr)(\omega) =
    \sum_{i} b_{i} \otimes S(c_{i})d_{i}   
\end{align*}
is zero in $\AlA$,  then also
 \begin{align} \label{eq:trdag-aux-1}
(\id \otimes \Sr)(\rT \otimes \id)(\omega) =
 \sum_{i} b_{i[1]} \otimes S(c_{i}b_{i[2]})d_{i}
 \end{align}
is zero in $\AbA$.
To this end,
  consider the following diagram:
  \begin{align*}
    \xymatrix@C=10pt@R=15pt{
      \Ac \oo \cA^{C} \oo {^{C}A} \oo A
      \ar[rrrr]^{\id \otimes \id \otimes \Sr} \ar[rd]^(0.55){\id \otimes \rT \otimes \id}
      \ar[ddd]_{\lT \otimes \id \otimes \id} &
      \ar@{}[rrd]|{(4)}    &&& \ar@{-->}[ld]_(0.55){\id \otimes \Tr^{\dag}} \ar[ddd]^{\lT \otimes \id}
\Ac \oo \cA^{C} \oo \cA
      \\
\ar@{}[rd]|{(1)}      &  \ar@{}[rrd]|{(2)}    
\Ac \oo \cA_{B} \oo {^{B}A}
\ar[rr]^{\id \otimes \id \otimes \Sr} \ar[d]_{m \otimes \id \otimes \id} & \quad & 
\Ac \oo \cA_{B} \oo \bA
\ar[d]^{m \otimes \id}  \ar@{}[rd]|{(5)}    \\
&
\Ab \oo {^{B}A} \oo A
\ar[rr]_{\id\otimes \Sr} \ar@{}[rrd]|{(3)}  &   & \Ab \oo \bA &\\
\Ab \oo {^{B}A}^{C} \oo {^{C}A} \oo A
\ar[rrrr]_{\id \otimes \id \otimes \Sr} \ar[ru]^{\id \otimes m^{\op} \otimes \id} &&&& \ar[lu]_{\id \otimes \Sr}
\Ab \oo {^{B}A}^{C} \oo \cA
}
\end{align*}
Cell (1) commutes by \eqref{dg:ltrt-compatible}, (2) trivially, and (3) because $S$ is an anti-homomorphism. 
Now,  we make the  argument above precise. Suppose that $w  =\sum_{i} b_{i} \otimes c_{i} \otimes d_{i}\in  A^{C} \oo {^{C}A} \oo A$ and $(\id \otimes \Sr)(w)$ is zero.  Since the outer square commutes, we  can conclude that
\begin{align*}
  (\id \otimes \id \otimes \Sr)(\lT \otimes \id \otimes \id)(a \oo w)  = \sum_{i} ab_{i[1]} \otimes b_{i[2]} \otimes S(c_{i})d_{i}
\end{align*}
is zero
for all $a \in A$, and  since  cells (1)--(3)  commute, also
\begin{align*}
  (m  \otimes \id)(\id \otimes \id \otimes \Sr)(\id \otimes \rT \otimes \id)(a \oo w) = \sum_{i} ab_{i[1]} \otimes S(b_{i[2]}) S(c_{i})d_{i}
\end{align*}
is zero
for all $a \in A$. But then also the element \eqref{eq:trdag-aux-1} is zero because $S$ is anti-multiplicative and $\Ab \oo \bA$ is non-degenerate as a left module over $A\otimes 1$. 
Hence, we can deduce that there exists a unique map $\Tr^{\dag}$ that makes the first diagram in \eqref{dg:trdag} and cell (4) in the diagram above commute.  As $\Sr$ is surjective, we can deduce that cell (5) and hence also the second diagram in \eqref{dg:trdag} commute.
\end{proof}

\begin{lemma}
  The following diagram commutes:
  \begin{align*}
    \xymatrix@R=18pt{
  \sbA  \oo {^{B}A^{C}} \oo \cA  \ar[r]^{\id \otimes \Tr^{\dag}} \ar[d]_{\Tl \otimes \id} &   \sbA \oo \Asb_{B} \oo \bA \ar[d]^{\Tl\oo\id}  \\
   \AlAlA  \ar[r]^{\id \otimes \Tr^{\dag}}&
\AlAbA.
    }
  \end{align*}
\end{lemma}
\begin{proof}
  This follows easily from commutativity of the first diagram in Lemma \ref{lemma:trdag},  commutativity of \eqref{dg:tltr-coassociative},  and surjectity of the map $\Sr$.
\end{proof}
\begin{lemma} \label{lemma:trdag-counit}
We have $  (m \circ \Tr^{\dag}) (a\otimes b)= \epslt(a)b$ for all $a,b\in A$.
\end{lemma}
\begin{proof}
  In Sweedler notation, the idea is that 
  $a\epslt(b)c =ab_{[1]}  S(b_{[2]})c = a((m \circ \Tr^{\dag})(b\otimes c))$ for all $a,b,c\in A$ by  \eqref{dg:trdag} and \eqref{dg:antipode}. More formally,
  consider the following diagram:
  \begin{align*}
    \xymatrix@C=30pt@R=20pt{ 
 \Ac \oo  \cA_{B} \oo \bA\ar[r]^{ m \otimes \id} &  \Ac \oo {_{C}\scA} \oo \cA \ar `r/0pt[r] [rd]^{m} & \\
     \Ac \oo {_{C}\scA} \oo \cA \ar[u]^{\id \otimes \Tr^{\dag}} \ar[r]^{\lT \otimes\id} \ar[d]_{\id \otimes \epslt \otimes \id} &     \Ab \oo {^{B}A^{C}} \oo \cA \ar[u]_{\id \otimes m(S\otimes \id)} \ar[d]^{m(\id \otimes S) \otimes \id}    & A \\
 \Ac \oo \cC_{C} \oo \cA \ar[r] & \AcA  \ar `r/0pt[r] [ru]_{m} &
    }
  \end{align*}
The upper and the lower rectangle commute because of \eqref{dg:trdag} and \eqref{dg:antipode}, respectively, and the rectangle on the right hand side commutes as well. Hence so does the outer cell.
\end{proof}
\begin{proposition} \label{proposition:antipode-canonical}
  Let $\mathcal{A}$ be a multiplier bialgebroid with an antipode $S$. Then its canonical maps $\lT$ and $\Tr$ are bijective. 
\end{proposition}
\begin{proof}
  We show that the map $\Tr^{\dag}$ defined above is inverse to $\Tr$. 

In the unital case, we can use 
\eqref{eq:sweedler-mixed-coassociative-1}, \eqref{eq:sweedler-antipode}, and find
  \begin{align*}
    (\Tr^{\dag}\circ \Tr)(a \otimes b) = (a_{(1)})_{[1]} \otimes S((a_{(1)})_{[2]})a_{(2)}b &= a_{[1]}\otimes S((a_{[2]})_{(1)})(a_{[2]})_{(2)}b \\ &= a_{[1]} \otimes \epsrt(a_{[2]})b = a_{[1]}\epsrt(a_{[2]}) \otimes b = a \otimes b
  \end{align*}
for all $a,b\in A$. Thus,  $\Tr^{\dag}\circ \Tr=\id$, and a similar calculation shows that $\Tr \circ \Tr^{\dag} =\id$.

In the general case, we proceed as follows. We first claim that $\Tr^{\dag} \circ \Tr=\id$.
  Diagrams \eqref{dg:compatible} and \eqref{dg:trdag} imply that the following diagram commutes:
\begin{align*}
  \xymatrix@C=35pt@R=20pt{
 \AcAbA \ar[r]^{\id \otimes \Tr} \ar[d]_{\lT\otimes \id} &    \Ac \oo {_{C}\scA} \oo \cA \ar[r]^{\id \otimes \Tr^{\dag}} \ar[d]_{\lT \otimes \id} & \Ac \oo  \cA_{B} \oo \bA  \ar[d]^{m \otimes \id}  \\
\ArAbA \ar[r]^{\id \otimes \Tr} &    \Ab \oo {^{B}A^{C}} \oo \cA\ar[r]^(0.55){\id \otimes m(S\otimes \id)} & \AbA
  }
\end{align*}
By \eqref{dg:antipode}, the lower composition maps $a\otimes b\otimes c$ to $a \otimes \epsrt(b)c = a\epsrt(b) \otimes c$, and by definition of the counit, precomposition with $\lT \otimes\id$ gives $m\otimes \id$. Therefore, $(m \otimes \id)(\id \otimes \Tr^{\dag})(\id \otimes \Tr) = m\otimes \id$.  Since $\AbA$ is non-degenerate as a left $A\otimes 1$-module,  the claim follows.

To see that $\Tr \circ \Tr^{\dag} = \id$, consider the following diagram:
\begin{align*}
  \xymatrix@C=30pt@R=20pt{
  \sbA  \oo {^{B}A^{C}} \oo \cA \ar[r]^{\id \otimes \Tr^{\dag}} \ar[d]_{\Tl \otimes \id} &  \sbA \oo \Asb_{B} \oo \bA \ar[r]^{\id \otimes \Tr} \ar[d]_{\Tl \otimes \id} & \sbA  \oo {^{B}A_{B}} \oo \bA \ar[d]^{m^{\op} \otimes \id} \\
  \AlAlA \ar[r]^{\id \otimes \Tr^{\dag}} & \ar[r]^{\id \otimes m} \AlAbA & \AlA
}
\end{align*}
The right cell commutes by \eqref{dg:tltr-compatible}.
To see that the left cell commutes, use commutativity of the first diagram in  \eqref{dg:trdag},  commutativity of \eqref{dg:tltr-coassociative},  and surjectity of the map $\Sr$. By Lemma \ref{lemma:trdag-counit}, the lower composition maps $a \otimes b \otimes c$ to $a \otimes \epslt(b)c = t_{C}(\epslt(b))a\otimes c$, and now a similar argument as above shows that $\Tr \circ \Tr^{\dag} = \id$.

A similar argument shows that the map $\lT$ is invertible. 
\end{proof}

Summarising, we find:
\begin{theorem} \label{theorem:hopf-algebroid}
  Let $\mathcal{A}$ be a multiplier bialgebroid. Then the following two conditions are equivalent:
  \begin{enumerate}
  \item $\mathcal{A}$ has an antipode.
  \item $\mathcal{A}$ has counits and its canonical maps $\Tr$ and $\lT$ are bijective.
  \end{enumerate}
If these conditions hold, then the antipode is unique and   the following diagrams commute:
\begin{align} \label{dg:galois-inverse}
    \xymatrix@R=20pt{ \scA \oo \Asc \ar[r]^{\id \otimes S} \ar[d]_{\rT}
    & \AlA \ar[d]^{\Tr^{-1}} &  &                            \sbA \oo \Asb \ar[r]^{ S \otimes \id} \ar[d]_{\Tl} &  \ar[d]^{\lT^{-1}} \ArA \\
   \ArA \ar[r]^{\id \otimes S} & \AbA,&& \AlA \ar[r]^{ S \otimes \id} & \AcA.
}  
    \end{align}
\end{theorem}
In Sweedler notation, commutativity of \eqref{dg:galois-inverse} amounts to the equations
  \begin{align*} 
\Rr(a \otimes S(b)) &= a_{[1]} \otimes
    S(ba_{[2]}), & \lR(S(a)
    \otimes b) &= S(b_{(1)}a) \otimes b_{(2)}
  \end{align*} 
  for all $a,b\in A$. 
\begin{proof}[Proof of Theorem \ref{theorem:hopf-algebroid}]
  Propositions \ref{proposition:canonical-antipode} and \ref{proposition:antipode-canonical} imply equivalence of (1) and (2). Suppose that both conditions hold. To see that the first diagram above commutes, use \eqref{dg:trdag} and the fact that $S$ is anti-multiplicative. Similar arguments imply that the second diagram commutes as well.
\end{proof}
 We adopt the following terminology:
\begin{definition}
A \emph{multiplier Hopf algebroid}   is a multiplier bialgebroid with an antipode.
\end{definition} 
In the next result, we use the flip maps defined in  \eqref{eq:flips} and their inverses, but omit subscripts to  simplify notation.
\begin{proposition} \label{proposition:hopf-aux-1} Let $\mathcal{A}$  be a multiplier Hopf algebroid with antipode $S$. Then the following diagrams commute:
    \begin{align*} \xymatrix@R=15pt@C=20pt{ \Ac \otimes \cA \ar[d]_{\lT}
\ar[r]^{\Sigma}     & \sbA \oo \Asb \ar[r]^{\Tl}&  \AlA & 
\Ab \otimes \bA \ar[d]_{\Tr} 
\ar[r]^{\Sigma}   &  \scA \oo \Asc \ar[r]^{\rT}& \ArA, \\ 
  \ArA \ar[rr]_{ \id \otimes S} && \Ab \otimes \bA \ar[u]_{\Tr}
  & \AlA \ar[rr]_{S\otimes \id} && \Ac \otimes \cA. \ar[u]_{\lT} }
\end{align*}
  \end{proposition}
  \begin{proof}
We only prove commutativity of the  first diagram. Using Sweedler notation, we can use  \eqref{eq:sweedler-mixed-coassociative-2},   \eqref{eq:sweedler-antipode} and \eqref{eq:sweedler-left-counit} to conclude that
\begin{align*}
  a_{(1)}(b_{[1]})_{(1)} \otimes a_{(2)}(b_{[1]})_{(2)}S(b_{[2]}) &= a_{(1)}b_{(1)} \otimes a_{(2)}(b_{(2)})_{[1]}S((b_{(2)})_{[2]})  \\ &= a_{(1)}b_{(1)} \otimes a_{(2)}\epslt(b_{(2)})  \\&= a_{(1)}t_{C}( \epslt(b_{(2)}))b_{(1)} \otimes a_{(2)} = a_{(1)}b\otimes a_{(2)}
\end{align*}
for all $a,b\in A$. More formally,
\old consider following diagram, 
\begin{align*}
 \xymatrix@R=15pt@C=25pt{
 &
\ArAlA\ar[r]^{\id \otimes S \otimes \id} \ar[rd]_(0.3){\id
      \otimes \Sr} & \Ab \otimes {_{B}A_{C}} \otimes \cA \ar[d]^{\id
      \otimes \mult_{C}} \ar[r]^{\Tr \otimes \id} & \ctA \oo {_{C}A_{C}} \oo \cA \ar[d]^{\id \otimes \mult_{C}} \\ \Ac
    \otimes {_{C}A^{C}} \otimes \cA \ar[rd]_(0.4){\Sigma
      \otimes \id \quad} \ar[r]^(0.55){\id \otimes \Rr} \ar[ru]^{\lT \otimes \id}
    & \Ac \otimes {_{C}A_{B}} \otimes \bA \ar[r]_{\mult_{C} \otimes \id} &
    \Ab \otimes \bA \ar[r]_{\Tr} & \AlA, \\ & 
    {A^{C,B}} \oo {^{B}A} \oo \cA \ar[r]_(0.5){\Tl \otimes \id} &  \ctA \oo {_{C}A_{C}} \oo \cA \ar[ru]_{\id \otimes \mult_{C}}
    &}
  \end{align*} 
  where $m_{C}$ denotes the multiplication map from $\Ac\otimes\cA$ to $A$.
 The lower cell commutes by \eqref{dg:tltr-multiplicative} and
  \eqref{dg:tltr-compatible},  the upper left cell by \eqref{dg:trdag}, and the upper right cell by \eqref{dg:tltr-module}.  Hence, the entire diagram commutes, showing that $(\Tr \otimes
 \id)(\id \otimes S)(\lT\otimes \id)=(\Tl \otimes \id)(\Sigma \otimes
 \id)$.
\end{proof}

\section{The regular case}    

\label{section:regular}

The antipode of a multiplier Hopf algebroid turns out to be invertible if and only if  a certain co-opposite multiplier bialgebroid is a multiplier Hopf algebroid as well or, equivalently, if all four canonical maps are bijective and some minor technical condition holds.  We prove the equivalence of these conditions and derive further relations between the  canonical maps and the antipode, most importantly, that the antipode reverses the comultiplications.

Given a  multiplier bialgebroid $(A,B,C,t_{B},t_{C},\Deltart,\Deltalt)$, define $_{C}I,I^{C} \subseteq C$ by
\begin{align*}
{_{C}I} &:= \lspan\{ \psi(a) : \psi
  \in \Hom(\csA,\cC), a \in A\}, \\
 {I^{C}} &:= \lspan\{ \phi(a) : \phi \in \Hom(\ctA,\Cc), a \in A\}
\end{align*}
as before, and  similarly  define $I_{B},{^{B}I} \subseteq B$. \old
\begin{definition} \label{definition:hopf-regular}
  We call a multiplier bialgebroid  $(A,B,C,t_{B},t_{C},\Deltart,\Deltalt)$ a \emph{regular multiplier Hopf   algebroid} if the following conditions hold: 
  \begin{enumerate}
  \item the subspaces $t_{C}({_{C}I}) A$, ${I^{C}} A$, $A t_{B}([\Ab])$ and  $A  [\Asb]$ are equal to $A$;
  \item the canonical maps $\Tl,\Tr,\lT,\rT$ are  bijective.
  \end{enumerate}
\end{definition}
\new This terminology is justified:
\begin{remark} \label{remark:regular-justified} Every regular multiplier Hopf algebroid has counits by Proposition \ref{prop:left-counit-existence} and hence is a multiplier Hopf algebroid by Theorem \ref{theorem:hopf-algebroid}.   Conversely, if $\mathcal{A}$ is a multiplier Hopf algebroid, then condition (1) holds by Lemma \ref{lm:counit-isit} and the right-handed counterpart, and the maps $\lT$ and $\Tr$ are bijective, but not necessarily $\Tl$ nor $\rT$.
\end{remark}

To establish the main result stated above, we will make use of the following  four-fold symmetry of multiplier bialgebroids. \old

Let $\mathcal{A}=(A,B,C,t_{B},t_{C},\Deltart,\Deltalt)$ be a multiplier bialgebroid. Write  $(A^{\co})^{B}$, $_{B}(A^{\co})$ and   $
(A^{\co})_{C}$, $^{C}(A^{\co})$ for $A$, regarded as a $B$-module or $C$-module such that
\begin{align*}
 a \cdot x&:= t_{C}^{-1}(x)a, &   x \cdot a &:= x a, & a\cdot y &:= ay, &
  y\cdot a &:= at_{B}^{-1}(y)
\end{align*}
for all $a\in A$, $x\in B$, $y\in C$. Then we can define flip maps
\begin{align*}
  \Sigma_{( \ctA,\cA)}& \colon \AlA \to (A^{\co})^{B} \oo {_{B}(A^{\co})}, & 
  \Sigma_{(\Ab,\bAt)} &\colon \ArA
  \to  (A^{\co})_{C} \oo {^{C}(A^{\co})}
\end{align*}
and  homomorphisms
\begin{align*}
   (\Deltalt)^{\co}&\colon A \to   (A^{\co})^{B} \overline{\times} {_{B}(A^{\co})}, &
   ((\Deltalt)^{\co}(a))(b\otimes c) &=   \Sigma_{( \ctA,\cA)}(\Delta_{C}(a)(c\otimes b)), \\   (\Deltart)^{\co} &\colon A \to  (A^{\co})_{C} \overline{\times} {^{C}(A^{\co})}, &
  (b\otimes c) ((\Deltart)^{\co}(a)) &=   \Sigma_{( \Ab,\bAt)}((c\otimes b)\Delta_{B}(a)).
\end{align*}
 By Proposition \ref{prop:left-co-opposite} and the right-handed analogue, the tuple
\begin{align*}
  \mathcal{A}^{\co}:=(A,C,B,t_{B}^{-1},t_{C}^{-1},(\Deltart)^{\co},(\Deltalt)^{\co})
\end{align*}
is a multiplier bialgebroid again.  We call it the \emph{co-opposite} of $\mathcal{A}$.  

Next, denote by $C^{\op},B^{\op}$ the images of $C$ and $B$ under the canonical identification $M(A)^{\op}\cong M(A^{\op})$, regard $t_{C}$ and $t_{B}$ as anti-isomorphisms between $C^{\op}$ and $B^{\op}$,  denote by $a \mapsto a^{\op}$ the canonical anti-isomorphism from $A$ to $A^{\op}$, and write  $(A^{\op})_{B^{\op}}$, $^{B^{\op}}(A^{\op})$ and  $(A^{\op})^{C^{\op}}$, $_{C^{\op}}(A^{\op})$ for $A^{\op}$, regarded as a $B^{\op}$-module or $C^{\op}$-module such that
\begin{align*}
a^{\op} \cdot x^{\op} &= (xa)^{\op}, & x^{\op}\cdot a^{\op} &= (t_{C}^{-1}(x)a)^{\op}, &
a^{\op} \cdot y^{\op} &= (at_{B}^{-1}(y))^{\op}, & y^{\op} \cdot a^{\op} &= (ay)^{\op}
\end{align*}
for all $x\in B$, $y\in C$, $a\in A$.
Then the map $a\otimes b\mapsto a^{\op} \otimes b^{\op}$ descends to isomorphisms
\begin{align*}
 \AlA &\to (A^{\op})_{B^{\op}} \oo {^{B^{\op}}(A^{\op})}, &
\ArA & \to (A^{\op})^{C^{\op}} \oo {_{C^{\op}}(A^{\op})},
\end{align*}
which we write as $w\mapsto w^{(\op \otimes \op)}$. Using these isomorphisms, we define homomorphisms
\begin{align*}
  (\Deltalt)^{\op} &\colon A^{\op} \to 
(A^{\op})_{B^{\op}} \overline{\times} {^{B^{\op}}(A^{\op})}, &
(b^{\op}\otimes c^{\op}) ((\Deltalt)^{\op}(a^{\op})) &=(\Deltalt(a)(b\otimes c))^{\op\otimes\op}, \\
(\Deltart)^{\op} &\colon A^{\op} \to 
(A^{\op})^{C^{\op}} \overline{\times} {_{C^{\op}}(A^{\op})}, &
((\Deltart)^{\op}(a^{\op})(b^{\op} \otimes c^{\op}) &=((b \otimes c)\Deltart(a))^{\op\otimes \op}.
\end{align*}
Using Proposition \ref{prop:left-opposite}, one verifies that
 \begin{align*}
 \mathcal{A}^{\op}=(A^{\op},B^{\op},C^{\op},t_{C}^{-1},t_{B}^{-1},(\Deltalt)^{\op},(\Deltart)^{\op})
 \end{align*}
 is a multiplier bialgebroid again. We call it the \emph{opposite} of $\mathcal{A}$.

Composing the two constructions, we obtain the \emph{bi-opposite} multiplier bialgebroid
\begin{align*}
  \mathcal{A}^{\op,\co}=(A^{\op},C^{\op},B^{\op},t_{C},t_{B},(\Deltalt)^{\op,\co},(\Deltart)^{\op,\co}).
\end{align*}

 In Sweedler notation, 
\begin{align*}
(\Deltalt)^{\co}(a) &= a_{(2)} \otimes a_{(1)}, &
(\Deltart)^{\op}(a^{\op}) &= a^{\op}_{[1]} \otimes a^{\op}_{[2]}, &
(\Deltart)^{\op,\co}(a^{\op}) &= a^{\op}_{[2]} \otimes a^{\op}_{[1]}, \\
(\Deltart)^{\co}(a)
&= a_{[2]} \otimes a_{[1]}, &
(\Deltalt)^{\op}(a^{\op}) &= a^{\op}_{(1)} \otimes a^{\op}_{(2)}, &
(\Deltalt)^{\op,\co}(a^{\op}) &= a^{\op}_{(2)}\otimes a^{\op}_{(1)}.
\end{align*}

If $ \epslt$ and $\epsrt$ are a left and a right counit of $\mathcal{A}$, then left and right counits of $\mathcal{A}^{\co}$, $\mathcal{A}^{\op}$ and $\mathcal{A}^{\op,\co}$ are given by
\begin{align} \label{eq:symmetry-counits}
  (\epslt)^{\co} &= S_{C} \circ \epslt, & (\epsrt)^{\op} &=  S_{B} \circ \epsrt,  &
  (\epsrt)^{\op,\co} &=\epsrt, \\
 (\epsrt)^{\co}&=S_{B} \circ
  \epsrt, & 
  (\epslt)^{\op} &= S_{C} \circ \epslt, &
  (\epslt)^{\op,\co} &=\epslt,
\end{align}
respectively, as one can easily check using Propositions \ref{prop:left-co-opposite} and \ref{prop:left-opposite}.

The proof of the following result is straightforward and  left to the reader. \new
\begin{lemma} \label{lemma:hopf-symmetry} 
Let $\mathcal{A}$ be a multiplier bialgebroid. 
  \begin{enumerate}
  \item If $S$ is an antipode for $\mathcal{A}$, then $S$ is an antipode for  $\mathcal{A}^{\op,\co}$. 
  \item If $\mathcal{A}$ is a regular multiplier Hopf algebroid, then so are $\mathcal{A}^{\co}$,   $\mathcal{A}^{\op}$ and $\mathcal{A}^{\op,\co}$.
  \item If $S$ is an invertible antipode for $\mathcal{A}$, then the inverse $S^{-1}$ is an invertible antipode for  $\mathcal{A}^{\co}$ and for $\mathcal{A}^{\op}$.
  \end{enumerate}
\end{lemma}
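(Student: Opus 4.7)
The plan is to verify everything directly by translating each structural ingredient through the symmetry operations using Propositions \ref{prop:left-co-opposite} and \ref{prop:left-opposite} and their right-handed analogues, exploiting the fact that both operations are involutive and that the bi-opposite is simply the composite.

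For part (1), the key observation is that taking co-opposite or opposite just relabels the four canonical maps $(\Tl,\Tr,\lT,\rT)$ up to composition with flip maps (resp.\ with $J\otimes J$). Explicitly, Proposition \ref{prop:left-co-opposite} and the analogue for right multiplier bialgebroids show that the canonical maps of $\mathcal{A}^{\co}$ are (up to the flips $\Sigma$) a permutation of the canonical maps of $\mathcal{A}$, and similarly Proposition \ref{prop:left-opposite} shows that the canonical maps of $\mathcal{A}^{\op}$ arise from those of $\mathcal{A}$ by conjugation with $J \otimes J$. Since flips and $J\otimes J$ are bijective, bijectivity of the four canonical maps passes from $\mathcal{A}$ to $\mathcal{A}^{\co}$, $\mathcal{A}^{\op}$, and hence also to $\mathcal{A}^{\op,\co}$. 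The spaces $\gr{I}{}{B}{}{},\gr{I}{}{}{B}{},\gr{I}{}{}{}{C},\gr{I}{C}{}{}{}$ likewise get permuted among the four versions; for instance, under co-opposition $\gr{I}{}{B}{}{}$ for $\mathcal{A}^{\co}$ corresponds to $\gr{I}{}{}{B}{}$ for $\mathcal{A}$, etc. Condition (1) of Definition \ref{definition:hopf} for all four spaces for $\mathcal{A}$ therefore yields the corresponding condition for $\mathcal{A}^{\co}$, $\mathcal{A}^{\op}$ and $\mathcal{A}^{\op,\co}$.

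For part (2), I would check the two defining conditions of an invertible antipode separately. The bimodule condition \eqref{eq:antipode-bimodule} is symmetric under the four symmetries once one traces the identifications: for $\mathcal{A}^{\co}$ the roles of $B$ and $C$ and of $S_B,S_C$ and of $S_B^{-1},S_C^{-1}$ swap, but $S^{-1}$ satisfies the mirrored version of \eqref{eq:antipode-bimodule} because $S$ is an anti-automorphism, so applying $S^{-1}$ to both sides of \eqref{eq:antipode-bimodule} for $\mathcal{A}$ gives precisely the required formula for $\mathcal{A}^{\co}$. For $\mathcal{A}^{\op}$ the identity $J \circ S \circ J^{-1} = S^{-1}$ (as an anti-automorphism of $A^{\op}$) yields the condition with roles reversed, and for $\mathcal{A}^{\op,\co}$ the two reversals cancel, giving back $S$ itself.

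For the diagrammatic conditions \eqref{dg:antipode}, I would translate the two commuting squares for $\mathcal{A}$ into the corresponding squares for the three derived multiplier bialgebroids by substituting the expressions for the canonical maps given in the paragraphs preceding the lemma, and the counits supplied by \eqref{eq:symmetry-counits}. In Sweedler notation, the two defining identities for $\mathcal{A}$ are $S_C(\epsc(a))b = S(a_{(1)})a_{(2)}b$ and $aS_B(\epsb(b)) = ab_{[1]}S(b_{[2]})$; applying $S^{-1}$ to the first, using that $S^{-1}(S_C(y))=S_B^{-1}(y)$ (which follows from $S$ being an anti-automorphism satisfying \eqref{eq:antipode-bimodule}), and using that the comultiplications of $\mathcal{A}^{\co}$ swap the Sweedler legs, yields exactly the antipode identity required for $\mathcal{A}^{\co}$ with counit $(\epsc)^{\co}=S_C\epsc$ and antipode $S^{-1}$; the other three cases are analogous. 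The main obstacle, which is purely bookkeeping rather than conceptual, will be to keep the module structures straight when passing to $\mathcal{A}^{\op}$, since there $J$ intervenes and one must ensure that the substitution $(\Delta_B)^{\op}(J(a)) = (J\otimes J)\Delta_B(a)(J\otimes J)^{-1}$ transports the antipode condition to a valid statement in $A^{\op}$; but this is settled once one observes that the multiplications $m_B$ and $m_C$ of $A$ become the opposite multiplications of $A^{\op}$, matching the swap of $\Tr \leftrightarrow \lT$ under opposition.
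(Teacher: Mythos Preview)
Your approach is exactly what the paper intends: it explicitly states that the proof ``is straightforward and therefore left to the reader,'' and your plan of transporting the canonical maps, the ideals $\gr{I}{}{B}{}{},\gr{I}{}{}{B}{},\gr{I}{}{}{}{C},\gr{I}{C}{}{}{}$, the bimodule condition \eqref{eq:antipode-bimodule}, and the antipode diagrams \eqref{dg:antipode} through the symmetries via Propositions~\ref{prop:left-co-opposite} and~\ref{prop:left-opposite} and \eqref{eq:symmetry-counits} is precisely that straightforward verification. One small slip: from \eqref{eq:antipode-bimodule} the extension of $S$ to $M(A)$ restricts to $S_{B}$ on $B$ and $S_{C}$ on $C$, so $S^{-1}(S_{C}(y))=S_{C}^{-1}(S_{C}(y))=y$, not $S_{B}^{-1}(y)$; this does not affect your strategy, only the bookkeeping in the illustrative Sweedler computation you sketch.
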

We can now state and  the second main result of this section.
\begin{theorem} \label{tm:hopf-characterization} 
Let $\mathcal{A}$ be  a multiplier bialgebroid. Then the following conditions are equivalent: 
\begin{enumerate}
\item $\mathcal{A}$  has an invertible antipode;
\item $\mathcal{A}$ is a regular multiplier  Hopf algebroid;
\item $\mathcal{A}$ is  a multiplier Hopf algebroid and $\mathcal{A}^{\op}$ is a multiplier Hopf algebroid; 
\item $\mathcal{A}$ is a multiplier Hopf algebroid and $\mathcal{A}^{\co}$ is a multiplier Hopf algebroid.
\end{enumerate}
\end{theorem} 
\begin{proof}
  (1)$\Rightarrow$(2):  By Theorem \ref{theorem:hopf-algebroid}, $\Tr$ and $\lT$ are invertible. Lemma \ref{lemma:hopf-symmetry},   and the same theorem, applied to $\mathcal{A}^{\op}$ or $\mathcal{A}^{\co}$, imply that $\rT$ and $\Tl$ are invertible as well.  Condition (1) in Definition \ref{definition:hopf-regular} holds by Lemma \ref{lm:counit-isit}.

  (2)$\Rightarrow$(3):  Use Lemma \ref{lemma:hopf-symmetry} and  apply Remark \ref{remark:regular-justified} to $\mathcal{A}$ and to $\mathcal{A}^{\op}$.

  (3)$\Rightarrow$(4): By Lemma \ref{lemma:hopf-symmetry}, $\mathcal{A}^{\co} = (\mathcal{A}^{\op})^{\op,\co}$  is  a multiplier Hopf algebroid.

  (4)$\Rightarrow$(1): Denote by $S$ and by $S^{\co}$ the antipodes of $\mathcal{ A}$ and of $\mathcal{A}^{\co}$, respectively, and write  $\Sr(a\otimes b)=S(a)b$ for all $a,b\in A$ as before. Then  \old
  the following diagram commutes, 
\begin{align*} \xymatrix@R=18pt{ \AlAsA \ar[r]^{\id \otimes \mult_{B}}
\ar[d]_{\Rl \otimes \id} & \AlA \ar[r]^{\Sr} & A \\ A^{B} \oo {^{B}A_{B}} \oo \bA \ar `d/0pt[d] [dr]^(0.3){\id \otimes \epsrt \otimes \id}  \ar[r]^{\id \otimes \Tr} & A^{B} \otimes {^{B}A^{C}} \oo \csA \ar[u]_{\mult_{B}^{\op} \otimes \id}
\ar[r]^(0.6){\id \otimes \Sr} & A^{B} \oo {_{B}A} \ar[u]_{\Sr} \\ & A^{B} \oo {_{B}B_{B}} \oo \bA \ar `r/0pt[r] [ru] & }
\end{align*} 
showing that $S(a)bc=S(S^{\co}(b)a)c=S(a)S(S^{\co}(b))c$ for all $a,b,c\in A$. We first conclude $S(S^{\co}(A)A) =S(A)A=A$, whence $S(A)=A$,  and $S(S^{\co}(b))=b$, and then by symmetry $S^{\co}(A)=A$ and $S^{\co}(S(b))=b$. Thus, $S\circ S^{\co}=\id_{A}$, and likewise $S^{\co} \circ S=\id_{A}$.
\end{proof}

\old
The canonical maps and the antipode satisfy the following useful relations.
\begin{corollary} \label{prop:antipode-aux} Let $\mathcal{A}=(A,B,C,t_{B},t_{C},\Deltart,\Deltalt)$ be a regular multiplier Hopf algebroid with antipode $S$ and canonical maps $\Tl,\Tr, \lT,\rT$. Then the following diagrams commute, where we omitted the subscripts on $\Sigma$ for better legibility:
    \begin{align*} \xymatrix@R=15pt@C=8pt{ \Ac \otimes \cA \ar[dd]_{\lT}
\ar[rd]_{\Tl\Sigma} \ar[rr]^{(S \otimes \id)\Sigma} && \ArA &
\Ab \otimes \bA \ar[dd]_{\Tr} \ar[rr]^{(\id \otimes S)\Sigma}
\ar[rd]_{\rT\Sigma} & & \AlA \\ 
 & \AlA & &  & \ArA & \\ \ArA \ar[rr]_{(S \otimes
\id)\Sigma} && \ctA \otimes \cAt \ar@{<-}[lu]^(0.6){\Sigma\Tr^{-1}}
\ar@{<-}[uu]_{\rT^{-1}}  & \AlA \ar[rr]_{(\id \otimes S)\Sigma} && \btA \otimes \bAt
\ar@{<-}[uu]_{\Tl^{-1}} \ar@{<-}[lu]^(0.6){\Sigma\lT^{-1}} }
\end{align*}
\end{corollary} 
\begin{proof} 
\new The lower left triangles commute by Proposition \ref{proposition:hopf-aux-1} in the
first square commutes. The same result applied to $\mathcal{A}^{\co}$ implies that the remaining triangles commute. \old
\end{proof}
\begin{remark}
  In Sweedler notation, commutativity of the first diagram in
  Proposition \ref{prop:antipode-aux} 
 amounts to equivalence of the following conditions  for arbitrary elements
$a_{i},b_{i},c_{j},d_{j} \in A$,
\begin{enumerate}
\item $\sum_{i} S(b_{i}) \otimes a_{i} = \sum_{j} c_{j[1]} \otimes d_{j}c_{j[2]}$,
\item $\sum_{i} a_{i(1)}b \otimes a_{i(2)} = \sum_{j} d_{j(1)} \otimes
d_{j(2)}c_{j}$,
\item $\sum_{i} a_{i}b_{i[1]} \otimes b_{i[2]} = \sum_{j} d_{j} \otimes
S^{-1}(c_{j})$,
\end{enumerate} and commutativity of the second diagram amounts to
equivalence of the following conditions,
\begin{enumerate}
\item[(1')] $\sum_{i} b_{i} \otimes S(a_{i}) = \sum_{j}  d_{j(1)}c_{j} \otimes
d_{j(2)}$,
\item[(2')] $\sum_{i} b_{[1]} \otimes ab_{[2]} = \sum_{j} d_{j}c_{j[1]} \otimes
c_{j[2]}$,
\item[(3')] $\sum_{i} a_{i(1)} \otimes a_{i(2)}b_{i} = \sum_{j} S^{-1}(d_{j}) \otimes
c_{j}$.
\end{enumerate}
\end{remark}
The antipode does not only reverse the multiplication, but also the comultiplication:
\begin{proposition} \label{prop:antipode-comult} Let $\mathcal{A}=(A,B,C,t_{B},t_{C},\Deltart,\Deltalt)$ be a regular multiplier Hopf algebroid with antipode $S$ and canonical maps $\Tl,\Tr,\lT,\rT$. Then the following diagrams commute, where we omitted the subscripts on $\Sigma$ for better legibility:
  \begin{align*} \xymatrix@C=30pt@R=15pt{ \btA \otimes \bAt \ar[d]_{\Tl}
\ar[r]^{\Sigma(S \otimes S)} & \ctA \otimes \cAt \ar[d]^{\rT} \\ \AlA \ar[r]_{\Sigma (S \otimes S)} & \ArA }
\quad \text{and} \quad \xymatrix@C=30pt@R=15pt{ \Ac \otimes \cA \ar[d]_{\lT} \ar[r]^{\Sigma(S \otimes S)}
& \Ab \otimes \bA \ar[d]^{\Tr} \\ \ArA \ar[r]_{\Sigma (S \otimes S)} & \AlA, }
  \end{align*}
\end{proposition}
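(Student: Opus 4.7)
The plan is to deduce both squares from the triangle identities in Proposition~\ref{prop:antipode-aux} and the Galois-type inverse formulas from Theorem~\ref{tm:hopf-characterization}, using the four-fold symmetry of Lemma~\ref{lemma:hopf-symmetry} to reduce the second square to the first.

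First I would check that $\Sigma \circ (S\otimes S)$ descends to a well-defined map between the balanced tensor products shown on the two vertical edges of each square. This is the usual bookkeeping: the bimodule relation \eqref{eq:antipode-bimodule} for $S$, together with $S_{B}(B)=C$ and $S_{C}(C)=B$, implies that $S$ interchanges the relevant $B$- and $C$-module structures, so that the composite with the flip indeed sends $\cA \otimes \Ac$ into $\bA \otimes \Ab$ and $\bA \otimes \sbA$ into $\Asc \otimes \Ac$, and likewise for the second diagram. Next, I would invoke the triangle identity $\Tl \circ \Sigma = \Tr \circ (\id \otimes S) \circ \lT$ that appears at the end of the proof of Proposition~\ref{prop:antipode-aux}, and combine it with the upper-left square of \eqref{dg:galois-inverse}, which rearranges to $\rT = (\id\otimes S^{-1}) \circ \Tr^{-1} \circ (\id \otimes S)$. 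Substituting the second into the first and using that $S$ is an anti-automorphism intertwined with the flip $\Sigma$ produces the equality $\rT \circ \Sigma \circ (S\otimes S) = \Sigma \circ (S\otimes S) \circ \Tl$, which is the first diagram.

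For the second diagram, I would apply what has just been proved to the co-opposite $\mathcal{A}^{\co}$ of Lemma~\ref{lemma:hopf-symmetry}, whose antipode is $S^{-1}$ and whose canonical maps are explicit flip-conjugates of those of $\mathcal{A}$. Unwinding the first diagram for $\mathcal{A}^{\co}$ through those identifications yields precisely the second diagram for $\mathcal{A}$. The main obstacle is the well-definedness step: since the four corners of each square are balanced over different pairs of modules, one has to verify carefully that each composition in the chase lands in the intended target and respects the correct bimodule relations; once this is arranged, the algebraic content reduces to a short formal manipulation using identities already available.
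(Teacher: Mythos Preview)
Your overall strategy matches the paper's: combine a relation from Proposition~\ref{prop:antipode-aux} with one from \eqref{dg:galois-inverse}, then pass to $\mathcal{A}^{\co}$ for the second square. But the specific identities you pick do not paste together as claimed. Substituting $\rT = (\id\otimes S^{-1})\,\Tr^{-1}\,(\id\otimes S)$ into $\Tl\Sigma = \Tr(\id\otimes S)\lT$ gives
\[
\Tl\Sigma \;=\; (\id\otimes S)\,\rT^{-1}\,\lT,
\]
and you are still left with $\lT$; no amount of juggling the flip and the anti-automorphism $S$ alone will remove it. To finish you would need one more relation, e.g.\ the right square of \eqref{dg:galois-inverse} expressing $\lT$ in terms of $\Tl$, so the claim ``substituting the second into the first \ldots\ produces the equality'' is not correct as stated.

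The paper avoids this by choosing two relations that share $\lT$ so it cancels immediately: it takes the \emph{outer} square of the first diagram in Proposition~\ref{prop:antipode-aux}, namely $(S\otimes\id)\Sigma\,\lT = \rT^{-1}\,(S\otimes\id)\Sigma$, and the right-hand square of \eqref{dg:galois-inverse}, namely $\lT\,(S\otimes\id)\,\Tl = S\otimes\id$. Pasting these side by side along $\lT$ and reading the composite along top and bottom as $(S\otimes\id)\Sigma\circ(S\otimes\id)=\Sigma(S\otimes S)$ yields the first square in one step. Your well-definedness check and the reduction of the second square to the co-opposite are fine; the only fix needed is to use the outer square of Proposition~\ref{prop:antipode-aux} (which is part of its statement) rather than the inner triangle from its proof.
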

\new  In Sweedler notation, commutativity of the diagrams above  amounts to the relations
  \begin{gather*} S(b)_{[1]} \otimes S(a)S(b)_{[2]} =    S(b_{(2)}) \otimes S(b_{(1)}a) = S(b_{(2)}) \otimes
    S(a)S(b_{(1)}), \\ S(b)_{(1)} \otimes S(b)_{(2)}S(a) =    S(b_{[2]}) \otimes S(ab_{[1]}) = S(b_{[2]}) \otimes
    S(b_{[1]})S(a)
  \end{gather*} for all $a,b\in A$. Note that in the non-unital case, the expressions on the
  right hand side require a suitable interpretation, which is given by
  the expressions in the middle. \old
\begin{proof}[Proof of Proposition
  \ref{prop:antipode-comult}] Combining the preceding result with the diagrams
  \eqref{dg:galois-inverse}, we find that the following diagram and
  hence the first square commute:
  \begin{align*} \xymatrix@R=15pt{ 
\btA \otimes \bAt \ar[d]_{\Tl} \ar[r]^{S \otimes \id}
& \ArA \ar[r]^{(S \otimes \id)\Sigma} & \ctA \otimes \cAt \ar[d]^{\rT} \\ \AlA \ar[r]_{S \otimes \id} & \Ac \otimes \cA
\ar[u]^{\lT} \ar[r]_{(S\otimes \id)\Sigma} & \ArA }
  \end{align*} 
  To obtain the second square, apply the same argument to $\mathcal{A}^{\co}$.
\end{proof}
The definition of an isomorphism between multiplier bialgebroids is
straightforward and left to the reader.
\begin{corollary} \label{corollary:hopf-symmetry}
  The antipode of a multiplier Hopf algebroid $\mathcal{A}$ is an
  isomorphism between $\mathcal{A}$ and the bi-opposite
  $\mathcal{A}^{\op,\co}$. In particular, the counits and antipode of $\mathcal{A}$ are related by $S_{C}\circ \epslt = \epsrt\circ S$ and  $S_{B}\circ \epsrt =
  \epslt \circ S$.
\end{corollary}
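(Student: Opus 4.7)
The plan is to view the corollary as an immediate consequence of Propositions \ref{prop:antipode-aux} and \ref{prop:antipode-comult}, which together with the bimodule identity \eqref{eq:antipode-bimodule} package the antipode into an isomorphism $\Phi\colon\mathcal{A}\to \mathcal{A}^{\op,\co}$; the announced counit identities then fall out of the uniqueness of counits (Theorem \ref{tm:hopf-characterization}) combined with the formulas \eqref{eq:symmetry-counits}.

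To construct $\Phi$, I would take $\Phi_A:=S\colon A\to A^{\op}$, $\Phi_B:=S_B\colon B\to C^{\op}$ and $\Phi_C:=S_C\colon C\to B^{\op}$. The first is an algebra isomorphism because $S$ is an anti-automorphism of $A$, the other two because $S_B,S_C$ are anti-isomorphisms between $B$ and $C$. Compatibility of $\Phi_A$ with the multiplier embeddings of $B$ and $C$ is exactly the bimodule identity \eqref{eq:antipode-bimodule} with $a=1$: the unique extension of $S$ to $M(A)$ sends $x\in B$ to $S_B(x)\in C$ and $y\in C$ to $S_C(y)\in B$, which under $M(A)^{\op}\cong M(A^{\op})$ matches $\Phi_B,\Phi_C$ and respects the anti-isomorphisms $S_C,S_B$ built into $\mathcal{A}^{\op,\co}$. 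Intertwining of $\Phi$ with the comultiplications is the content of Proposition \ref{prop:antipode-comult}: unwinding the Sweedler formulas for $(\Delta_C)^{\op,\co}$ and $(\Delta_B)^{\op,\co}$ recorded just after the definition of the bi-opposite, the required identities read
\[
\Delta_C\bigl(S(a)\bigr)=S(a_{(2)})\otimes S(a_{(1)}),\qquad \Delta_B\bigl(S(a)\bigr)=S(a_{[2]})\otimes S(a_{[1]}),
\]
which are precisely the two relations supplied by that proposition.

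Granted that $\Phi$ is an isomorphism of multiplier bialgebroids, the counit identities follow in one step. Both $\mathcal{A}$ and $\mathcal{A}^{\op,\co}$ are regular multiplier Hopf algebroids by Lemma \ref{lemma:hopf-symmetry} and therefore carry unique left and right counits by Theorem \ref{tm:hopf-characterization}; any isomorphism must transport them to each other, yielding $\Phi_B\circ\epsb=(\epsc)^{\op,\co}\circ\Phi_A$ and $\Phi_C\circ\epsc=(\epsb)^{\op,\co}\circ\Phi_A$. By \eqref{eq:symmetry-counits}, $(\epsc)^{\op,\co}=\epsc$ and $(\epsb)^{\op,\co}=\epsb$ as linear maps, so these relations reduce exactly to $S_B\circ\epsb=\epsc\circ S$ and $S_C\circ\epsc=\epsb\circ S$.

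The main obstacle will be the module-theoretic bookkeeping in the intertwining step: the tensor products involved in the two comultiplications of $\mathcal{A}^{\op,\co}$ are built from several flips and from the canonical anti-isomorphism $J\colon A\to A^{\op}$, and one must verify that the diagrams of Proposition \ref{prop:antipode-comult} genuinely descend to the quotients prescribed by these identifications. The Sweedler calculation above is the shadow of this verification, which is otherwise routine.
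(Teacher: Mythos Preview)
Your proposal is correct and follows essentially the same route as the paper: the paper's proof invokes Proposition \ref{prop:antipode-comult} to obtain the isomorphism, then transports the counits along $S$ and compares with \eqref{eq:symmetry-counits} via uniqueness, exactly as you do. Your write-up is simply more explicit about the constituent maps $\Phi_{A},\Phi_{B},\Phi_{C}$ and the module bookkeeping; the reference to Proposition \ref{prop:antipode-aux} is unnecessary but harmless.
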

\begin{proof}
The first assertion follows easily from   Proposition \ref{prop:antipode-comult}, and implies that the composition 
$S_{B} \circ \epsrt \circ S^{-1}$ is a left counit and $S_{C} \circ \epslt \circ S^{-1}$    is  a right counit of
$\mathcal{A}^{\op,\co}$. But by \eqref{eq:symmetry-counits}, the  counits of $\mathcal{A}^{\op,\co}$ are just  $\epslt$ and $\epsrt$,
respectively.
\end{proof}

Let us finally comment on the relation to Hopf algebroids. 
\begin{proposition}
  Let $\mathcal{A}$ be a unital regular multiplier Hopf algebroid  with antipode $S$.  Then the left  and the right bialgebroid associated to $\mathcal{A}_{B}$ and $\mathcal{A}_{C}$,  respectively, form a Hopf algebroid.  Conversely, every  Hopf algebroid with invertible antipode arises this way from a unital regular multiplier Hopf  algebroid.
\end{proposition}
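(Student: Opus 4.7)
The plan is to reduce the proposition to Proposition \ref{prop:left-unital} (and its right-handed analogue) and Theorem \ref{tm:hopf-characterization}, supplemented by a careful translation between our conventions and those of \cite{boehm:hopf}, \cite{boehm:bijective}. As indicated in the remark on notation conventions in \S\ref{section:left}, this dictionary sends B\"ohm's $(s_L, t_L, s_R, t_R, \epsilon_L, \epsilon_R)$ to our $(S_B, \id_B, S_C, \id_C, \epsb, \epsc)$ after a switch of source and target and passage to opposite base algebras. I would first set up this dictionary explicitly, and then translate each Hopf algebroid axiom into a corresponding statement about our multiplier bialgebroid.

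For the forward direction, starting from a unital regular multiplier Hopf algebroid $\mathcal{A}$, I would first invoke Proposition \ref{prop:left-counit-props}(2) and its right-handed counterpart to upgrade the counits $\epsb, \epsc$ (whose existence is guaranteed by Theorem \ref{tm:hopf-characterization}) to multiplicativity in the sense of \eqref{eq:left-counit-unital-2}, and then apply Proposition \ref{prop:left-unital} to both $\mathcal{A}_B$ and $\mathcal{A}_C$ to produce the bialgebroids $\mathcal{A}_L, \mathcal{A}_R$ on the common total algebra $A$. B\"ohm's source-target-counit compatibilities then unfold under the dictionary to the identifications $s_B(B) = t_C(C)$ and $t_B(B) = s_C(C)$ built into Definition \ref{definition:mult-hopf-algebroid}, while B\"ohm's mixed coassociativity axioms are precisely \eqref{eq:compatible}. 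For the antipode, B\"ohm's axioms $S \circ t_L = s_L$ and $S \circ t_R = s_R$ are the specializations of \eqref{eq:antipode-bimodule} at $a = 1$, and the convolution axioms unfold to the Sweedler identities $S(a_{(1)})a_{(2)} = S_C(\epsc(a))$ and $a_{[1]}S(a_{[2]}) = S_B(\epsb(a))$, which are just the diagrams \eqref{dg:antipode} specialized at $b = 1$; the full diagrams follow from these specializations upon right (resp.\ left) multiplication with $b$ in the unital case.

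For the converse, I would run the same dictionary backwards: given a Hopf algebroid $(\mathcal{H}_L, \mathcal{H}_R, S)$ with invertible antipode, Proposition \ref{prop:left-unital} and its right-handed analogue produce unital left and right multiplier bialgebroids, the Hopf algebroid compatibilities supply \eqref{eq:compatible}, giving a multiplier bialgebroid $\mathcal{A}$, and $S$ is an invertible antipode for $\mathcal{A}$ in the sense of Definition \ref{definition:antipode}. Theorem \ref{tm:hopf-characterization} then yields that $\mathcal{A}$ is a regular multiplier Hopf algebroid; the fullness condition in Definition \ref{definition:hopf}(1) can also be verified directly in the unital case, since for any $b \in B$ the assignment $a \mapsto \epsb(a) b$ lies in $\Hom(\bA, \bB)$ and sends $1_A$ to $b$, so $\gr{I}{}{B}{}{} = B$.

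The only obstacle I expect is the bookkeeping of conventions: once the dictionary is set up correctly, each Hopf algebroid axiom matches a single axiom of a regular multiplier Hopf algebroid, or a one-sided specialization thereof in the unital setting, and no substantive new argument is required beyond the cited results.
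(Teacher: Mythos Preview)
Your proposal is correct and follows essentially the same approach as the paper: invoke Proposition \ref{prop:left-unital} and its right-handed analogue to obtain the underlying bialgebroids, and then observe that the antipode conditions \eqref{eq:antipode-bimodule} and \eqref{dg:antipode} match B\"ohm's axioms (iii) and (iv) for a Hopf algebroid. The paper's proof is a two-line sketch pointing to exactly these ingredients; your write-up simply spells out the intermediate steps (counit multiplicativity via Proposition \ref{prop:left-counit-props}(2), the mixed coassociativity translation, and the verification of condition (1) in Definition \ref{definition:hopf} for the converse) that the paper leaves implicit.
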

\begin{proof} Use Proposition 
 \ref{prop:left-unital} and its right-handed analogue, and note that the  conditions (1) and (2) in Definition \ref{definition:antipode} are equivalent to
conditions (iii) and (iv) of Definition 4.1 in \cite{boehm:hopf}.
\end{proof}

\section{Special cases and examples}

\label{section:examples}

\new
To keep this article moderately sized, we only discuss a few special cases and examples. Further examples related to dynamical quantum groups \cite{timmermann:free}, crossed products for braided-commutative Yetter-Drinfeld algebras \cite{militaru},  and Pontrjagin duality  can be found in \cite{timmermann:integration} and \cite{MR3607289}.
\old

\subsection{Multiplier Hopf algebroids associated with weak multiplier Hopf algebras}
Weak multiplier Hopf algebras were introduced by the second author and Wang in \cite{daele:weakmult0},  \cite{daele:weakmult2}, \cite{daele:weakmult} as non-unital versions of weak Hopf algebras.  The precise relation between \new regular \old weak multiplier Hopf algebras and \new regular \old multiplier Hopf algebroids is studied in \cite{daele:relation}. Briefly, one can associate to every regular weak multiplier Hopf algebra a regular multiplier Hopf algebroid as follows.

A \new  \emph{weak multiplier Hopf algebra}  as defined in  \cite[Definition 1.14]{daele:weakmult} \old consists of a non-degenerate, idempotent algebra $A$ and a
homomorphism $\Delta \colon A\to M(A\otimes A)$ satisfying the following conditions:
\begin{enumerate}
\item   for all $a,b\in A$, \new the elements $(a\otimes 1)\Delta(A)$ and $\Delta(a)(1 \otimes b)$  belong to $A\otimes A$; \old
  \item $\Delta$ is coassociative in the sense that for all $a,b,c\in A$,
      \begin{align*} (a\otimes 1 \otimes 1)(\Delta \otimes \iota)(\Delta(b)(1 \otimes c)) =
(\iota\otimes \Delta)((a \otimes 1)\Delta(b))(1 \otimes 1 \otimes c);
    \end{align*}
  \item $\Delta$ is full in the sense that there are no strict subspaces $V,W\subset A$
satisfying
      \begin{align*} \Delta(A)(1 \otimes A) &\subseteq V\otimes A &&\text{or} & (A \otimes
1)\Delta(A) &\subseteq A\otimes W;
    \end{align*}
\item there exists a linear map $\varepsilon\colon A\to \C$ called the \emph{counit} such
that for all $a,b\in A$,
     \begin{align*} (\varepsilon \otimes \iota)(\Delta(a)(1\otimes b)) &= ab= (\iota
\otimes \varepsilon)((a \otimes 1)\Delta(b));
    \end{align*}
  \item there exists an idempotent $E \in M(A \otimes A)$ such that
      \begin{align*} \Delta(A)(1 \otimes A) &=E(A \otimes A) &&\text{and} & (A \otimes
1)\Delta(A)&=(A\otimes A)E,
    \end{align*}
  \item the idempotent $E$ in condition (4) satisfies
      \begin{align*} (\Delta \otimes \iota)(E) =(E \otimes 1)(1\otimes E)=(1 \otimes
E)(E\otimes 1) = (\iota \otimes \Delta)(E),
    \end{align*}
   where $\Delta \otimes \iota$ and $\iota \otimes \Delta$ are extended
to homomorphisms $M(A \otimes A) \to M(A \otimes A \otimes A)$ such that $1 \mapsto E
\otimes 1$ or $1 \mapsto 1 \otimes E$, respectively;
\item  \new the kernels of the linear maps
     \begin{align*}
      \begin{aligned} T_{1} &\colon A\otimes A \to A\otimes A, & a\otimes b&\mapsto
\Delta(a)(1 \otimes b), \\ T_{2} &\colon A\otimes A \to A\otimes A, & a\otimes b&\mapsto
(a \otimes 1)\Delta(b), \\
      \end{aligned}
    \end{align*}
 are given by
\begin{align*}
  \ker T_{1}= (1-G_{1})(A \otimes A) \quad \text{and} \quad
\ker T_{2}= (1-G_{2})(A \otimes A),
\end{align*}
where $G_{1},G_{2} \colon A\otimes A \to A\otimes A$ are characterized by
  \begin{align*}
    (G_{1} \otimes \id)(\Delta_{13}(a)(1 \otimes b\otimes c)) &= \Delta_{13}(a)(1 \otimes E)(1 \otimes b\otimes c), \\
(\id \otimes G_{2})((a\otimes b\otimes 1)\Delta_{13}(c)) &= (a\otimes b\otimes 1)(E\otimes 1)\Delta_{13}(c)
  \end{align*}
for all $a,b,c\in A$, see \cite[Proposition 1.11]{daele:weakmult}. \old
 \end{enumerate}
Given a weak multiplier Hopf algebra $(A,\Delta)$ as above, there exists an
antipode, which is a \new linear map $S\colon A\to M(A)$ \old such that the maps
  \begin{align*}
    \begin{aligned} R_{1} &\colon A\otimes A \to A\otimes A, & a\otimes b &\mapsto a_{(1)} \otimes S(a_{(2)})b, \\ R_{2} &\colon A\otimes A\to A\otimes A, & a \otimes b
&\mapsto aS(b_{(1)}) \otimes b_{(2)}
    \end{aligned}
  \end{align*}
 are well-defined and satisfy $T_{i}R_{i}T_{i}=T_{i}$ and
$R_{i}T_{i}R_{i}=R_{i}$ for $i=1,2$; see \new \cite[Propositions 2.4, 2.7]{daele:weakmult}.   \old
Using this antipode, one defines source and target maps
$\varepsilon_{s},\varepsilon_{t}\colon A\to M(A)$ by
\begin{align*} \varepsilon_{s}(a) &= S(a_{(1)})a_{(2)}, & \varepsilon_{t}(a) &=a_{(1)}S(a_{(2)}).
\end{align*}
\new Let $(A,\Delta)$ be a weak multiplier Hopf algebra $(A,\Delta)$. Then $\Delta$ is \emph{regular} if   $\Delta(A)(A \otimes 1)$ and $(1\otimes A)\Delta(A)$ lie in $A \otimes A$ \cite[Definition 1.1]{daele:weakmult}, and $(A,\Delta)$ is \emph{regular} if the antipode $S$ is bijective \cite[Theorem 4.10]{daele:weakmult}.
\begin{theorem} \label{theorem:wmha-mhad} Let $(A,\Delta)$ be a  weak multiplier
Hopf algebra, where $\Delta$ is regular.
 Then there exists a multiplier Hopf algebroid $\mathcal{A}=(A,B,C,t_{B},t_{C},\Deltart,\Deltalt)$ \old such that
      \begin{align*} B&=\varepsilon_{s}(A), & C&=\varepsilon_{t}(A),
& t_{B} &= S^{-1}\circ
\iota_{B}, & t_{C} &= S^{-1}\circ
\iota_{C} &
\end{align*}
and, denoting by  $\pi_{C} \colon A\otimes A\to \AlA$ and $\pi_{B}
\colon A\otimes A\to \ArA$  the natural quotient maps,
  \begin{align*} \Deltalt(a)(1 \otimes b) & =\pi_{C}(\Delta(a)(1\otimes b)), & (a\otimes
1)\Deltart(b) &= \pi_{B}((a\otimes 1)\Delta(b))
  \end{align*}
for all $a,b\in A$. \new If $(A,\Delta)$ is regular, then so is $\mathcal{A}$. \old
\end{theorem}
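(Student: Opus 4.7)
The plan is to unpack Definitions \ref{definition:mult-hopf-algebroid} and \ref{definition:hopf} and verify each required piece by descending $\Delta$, $T_{1}$ and $T_{2}$ of the weak multiplier Hopf algebra $(A,\Delta)$ to the appropriate module tensor products via the idempotents $E,F_{1},F_{2}$. First, I would assemble the base-algebra data. The structure theory of regular weak multiplier Hopf algebras \cite{daele:weakmult} gives that $B=\es(A)$ and $C=\et(A)$ are non-degenerate idempotent subalgebras of $M(A)$ with commuting images, that $BA=A=CA=AB=AC$, and that $S$ restricts to mutually inverse anti-isomorphisms $S_{B}\colon B\to C$ and $S_{C}\colon C\to B$. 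This already gives (A1) for $A$ and (A2) for the four module structures, and fulfils items (1),(2) of Definition \ref{definition:mult-hopf-algebroid}. The absorption relations $\Delta(x)=E(x\otimes 1)=(1\otimes x)E$ for $x\in B$ and their analogues for $C$ identify $\bA\otimes\sbA\cong E(A\otimes A)$ and $\Asc\otimes\Ac\cong(A\otimes A)E$, so that (A3) is inherited from $A\otimes A$; analogous identifications through $F_{1}$ and $F_{2}$ realise the other tensor products needed for the canonical maps.

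Next, the stated formulas determine $\Delta_{B}$ and $\Delta_{C}$ unambiguously: axiom (1) ensures that the four products $\Delta(a)(1\otimes b)$, $\Delta(a)(b\otimes 1)$, $(a\otimes 1)\Delta(b)$ and $(1\otimes b)\Delta(a)$ lie in $A\otimes A$, and axiom (5) together with the idempotent identifications glues the one-leg pieces into well-defined elements of $\AltkA$ and $\ArtkA$. Multiplicativity descends from multiplicativity of $\Delta$; the bimodule relations \eqref{eq:hopf-delta-bimodule} translate the absorption $\Delta(x)=E(x\otimes 1)$; and the left, right, and mixed coassociativity identities, including \eqref{eq:compatible}, follow from coassociativity of $\Delta$ after applying $\pi_{l},\pi_{r}$ and invoking axiom (5) to match $(\Delta\otimes\iota)(E)=(\iota\otimes\Delta)(E)$ between the three tensor factors. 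At this stage one has a multiplier bialgebroid in the sense of Definition \ref{definition:mult-hopf-algebroid}.

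The substantive step is bijectivity of the canonical maps. Axiom (6) identifies $\ker T_{1}=(A\otimes 1)(1-F_{1})(1\otimes A)$; combined with the compatibility $E_{13}(F_{1}\otimes 1)=E_{13}(1\otimes E)$ and the explicit description of the quotient $A\otimes A\twoheadrightarrow\btA\otimes\bAt$, this kernel is forced to coincide with the kernel of that quotient, so $T_{1}$ descends to a well-defined injection $\Tl\colon\btA\otimes\bAt\to\bsA\otimes\btA$; surjectivity onto $E(A\otimes A)\cong\bsA\otimes\btA$ is built into axiom (4), and $R_{1}$ provides an explicit inverse. The remaining three canonical maps $\Tr,\lT,\rT$ are treated analogously, with $T_{2}$ and the antipode $S$ supplying the transition between the left- and right-handed versions. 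For condition (1) of Definition \ref{definition:hopf}, slice maps built from the counit $\varepsilon$ and from $E$ yield enough $B$- and $C$-linear functionals with images covering $B$ and $C$, so the four required spans reduce to the already-established $BA=A=CA=AB=AC$.

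The main technical obstacle is the quotient bookkeeping pervading the second and third paragraphs: showing that the kernels of the natural surjections such as $A\otimes A\twoheadrightarrow\bA\otimes\sbA$ and $A\otimes A\twoheadrightarrow\btA\otimes\bAt$ coincide with the ranges of $1-E$, $1-F_{1}$, and their siblings on the right-handed side. This interlocking of axioms (4)--(6) of a regular weak multiplier Hopf algebra is precisely where the companion paper \cite{daele:relation} invests the bulk of its effort; any self-contained proof has to revisit those identifications in detail, whereas the framework of the present paper then makes the verification of multiplicativity, coassociativity, and bijectivity essentially diagrammatic once the identifications are in place.
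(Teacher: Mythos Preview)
The paper itself does not supply a proof of this theorem: it is stated with attribution to the companion paper \cite{daele:relation}, and the surrounding text merely refers the reader there for the argument and for the converse direction. So there is no ``paper's own proof'' to compare against.

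Your outline is a reasonable sketch of what the argument in \cite{daele:relation} must accomplish, and you correctly identify the crux: matching the kernels of the quotient maps $A\otimes A\to \bA\otimes\sbA$, $A\otimes A\to \btA\otimes\bAt$, etc., with the ranges of $1-E$, $1-F_{1}$, $1-F_{2}$ so that $T_{1},T_{2}$ and their regular companions descend to bijections. You also rightly flag that this is where the real work lies and that the present paper's diagrammatic machinery only takes over once those identifications are established. One small caution: your claim that $S_{B}$ and $S_{C}$ are \emph{mutually inverse} is stronger than what the multiplier Hopf algebroid axioms require (they need only be anti-isomorphisms, not inverses of each other); in the weak multiplier Hopf algebra setting this does hold because $S^{2}$ preserves $B$ and $C$, but it is worth being explicit about why. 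Otherwise your proposal is an accurate roadmap rather than a proof, which is appropriate given that the paper itself defers the proof.
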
 
\begin{proof}
  \new For the regular case, the assertion is proven in \cite[\S 4]{daele:relation}. This proof carries over to the general case. \old
\end{proof}
In \cite[\S5]{daele:relation}, we also give necessary conditions for a regular multiplier Hopf algebroid
to arise from a  regular weak multiplier
Hopf algebra this way.

\subsection{Involutions}

\label{section:star}

\new Let us briefly  discuss involutions on multiplier bialgebroids and 
show that they behave with respect to counits and antipodes as one should expect from the
theory of (weak) multiplier Hopf algebras \cite{daele:0}, \cite{daele:weakmult}. \old

Suppose that $\mathcal{A}=(A,B,C,t_{B},t_{C},\Deltart,\Deltalt)$ is a multiplier bialgebroid and $A$ is  a $^{*}$-algebra, so that $M(A)$ is a $^{*}$-algebra with respect to the involution given by $T^{*}a=(aT^{*})^{*}$ and $aT^{*}=(Ta^{*})^{*}$. Assume that $B$ and $C$ are $^{*}$-subalgebras of $M(A)$ and that
\begin{align}
  \label{eq:involution}
  t_{B}\circ \ast \circ t_{C} \circ \ast &=\id_{C}, &  t_{C}\circ \ast \circ t_{B}
\circ \ast& =\id_{B}.
\end{align}
Then the formula $a \otimes b\mapsto a^{*} \otimes b^{*}$ defines mutually inverse conjugate-linear maps
\begin{align*} \Ab \otimes \bA &\rightleftarrows \ctA \otimes \cAt, &
  \Ac \otimes \cA &\rightleftarrows \btA \otimes \bAt, & \AlA &\rightleftarrows \ArA,
  \end{align*}
  and conjugation by $* \otimes *$ yields mutually inverse   conjugate-linear, multiplicative bijections $\End(\AlA)
  \rightleftarrows \End(\ArA)$, which restrict to mutually   inverse conjugate-linear, anti-multiplicative bijections
  \begin{align*}
    \AltkA \rightleftarrows \ArtkA.
  \end{align*}
We write these bijections as $T\mapsto T^{*}$. Then 
\begin{align} \label{eq:delta-involution}
  \Deltalt(a^{*}) &= \Deltart(a)^{*} \quad \text{for all } a\in A
\end{align}
if and only if the associated canonical maps satisfy
\begin{align}
  \label{eq:tltr-involution}
    (\ast \otimes \ast) \circ \Tl  &= \lT \circ (\ast \otimes \ast), &
    (\ast \otimes \ast) \circ \Tr &= \rT \circ (\ast \otimes \ast).  
\end{align}
\begin{definition} 
  We call a multiplier bialgebroid $\mathcal{A}=(A,B,C,t_{B},t_{C},\Deltart,\Deltalt)$ with an   involution on the underlying algebra $A$ a \emph{multiplier $^{*}$-bialgebroid} if $B$ and $C$ are   $^*$-subalgebras of $M(A)$ and the relations \eqref{eq:involution} and \eqref{eq:delta-involution}  hold. If $\mathcal{A}$ is also a multiplier Hopf algebroid, we call $\mathcal{A}$ a  multiplier Hopf $^*$-algebroid.
\end{definition}
A multiplier Hopf $*$-algebroid is automatically  regular. This follows from
   Theorem \ref{theorem:hopf-algebroid} and \eqref{eq:tltr-involution}, but also from the following relation for the antipode:
\begin{proposition} \label{prop:involution} Let
  $\mathcal{A}=(A,B,C,t_{B},t_{C},\Deltart,\Deltalt)$ be a multiplier Hopf
  $^{*}$-algebroid. Then its left and right counits $\epslt$ and $\epsrt$ and its antipode $S$ satisfy
  \begin{align*}
   \epsrt\circ * &= *\circ
  S_{C}\circ \epslt, & \epslt\circ *&=*\circ S_{B}\circ \epsrt, & 
  S\circ *\circ S \circ *&=\id_{A}.
  \end{align*}
\end{proposition}
\begin{proof} 
Denote by $\overline{V}$ the complex-conjugate of a vector space $V$ and by $\overline{f} \colon \overline{V}\to \overline{W}$ the  complex-conjugate of a linear map $f\colon V\to W$ of complex vector spaces. Then we obtain a regular multiplier Hopf algebroid $\overline{\mathcal{A}}=(\overline{A},\overline{B},\overline{C},\overline{t_{B}},\overline{t_{C}},\overline{\Delta_{B}},\overline{\Delta_{C}})$ with counits $\overline{\epslt},\overline{\epsrt}$ and antipode $\overline{S}$. The relations \eqref{eq:involution} and \eqref{eq:delta-involution} imply that the involution $*$ on $A$ defines an isomorphism from the complex-conjugate $\overline{\mathcal{A}}$ to the opposite $\mathcal{A}^{\op}$ of $\mathcal{A}$. 
Now, \eqref{eq:symmetry-counits} and Lemma \ref{lemma:hopf-symmetry} imply
$\overline{\epslt} = * \circ (\epslt)^{\co} \circ * = * \circ S_{B} \circ \epsrt \circ *$,
$\overline{\epsrt} = * \circ (\epsrt)^{\co} \circ * = * \circ S_{C} \circ \epslt \circ *$ and $\overline{S} = * \circ S^{-1} \circ *$, whence the claim follows.
\end{proof}

\subsection{The function algebra and the convolution algebra of an
  \'etale groupoid} Let $G$ be a locally compact, Hausdorff groupoid
that is \'etale in the sense that the  source and the target  map $s$
and $t$ from $G$ to the space of units $G^{0}$ are local homeomorphisms; see, for example, \cite{renault}.
Then the function algebra and the convolution algebra of $G$ can be
endowed with the structure of regular multiplier Hopf algebroids as follows.

Denote by $C_{c}(G)$ and $C_{c}(G^{0})$ the algebras of compactly supported continuous functions on
$G$ and on $G^{0}$, respectively, and denote by $s^{*},t^{*} \colon C_{c}(G^{0}) \to M(C_{c}(G))$
the pull-back of functions along $s$ and $t$, respectively, that is,
\begin{align*}
  (t^{*}(f)w)(\gamma) &=
f(t(\gamma))w(\gamma), & (s^{*}(f)w)(\gamma)
=f(s(\gamma))w(\gamma)
\end{align*}
for all $f\in C_{c}(G^{0})$, $w\in C_{c}(G)$ and $\gamma\in G$. Let
\begin{align*}
  A&=C_{c}(G), & B&=s^{*}(C_{c}(G^{0})),   & C &=t^{*}(C_{c}(G^{0})),
\end{align*}
and denote by $t_{B},t_{C}$ the isomorphisms $B \rightleftarrows C$ mapping $s^{*}(f)$ to $t^{*}(f)$
and vice versa.  Since $G$ is \'etale, the natural map $A \otimes A \to C_{c}(G\times G)$ factorizes
to an isomorphism
\begin{align*}
  \AlA &=  \ArA  \to C_{c}(\GstG),
\end{align*}
where $\GstG$ denotes the composable pairs of elements of $G$.
Denote by $\Deltalt, \Deltart  \colon C_{c}(G) \to M(C_{c}(\GstG))$ the pull-back
of functions along the groupoid multiplication, that is,
  \begin{align*} (\Deltalt(u)(v \otimes w))(\gamma,\gamma')
=u(\gamma\gamma')v(\gamma)w(\gamma') = ((v\otimes w)\Deltart(u))(\gamma,\gamma')
  \end{align*}
  for all $u,v,w\in A, \gamma,\gamma' \in G$.  The associated canonical maps $T_{\lambda}=\lT$ and
  $T_{\rho}=\rT$ are the transposes of the maps
\begin{align*}
  \GstG &\to \GttG,  \ (\gamma,\gamma') \mapsto (\gamma,\gamma\gamma'),
  &
  \GstG &\to \GssG, \ (\gamma,\gamma') \mapsto (\gamma\gamma',\gamma),
\end{align*}
respectively, and therefore bijective, where $\GpqG = \{ (\gamma,\gamma') \in G\times G :
p(\gamma)=q(\gamma')\}$.  The tuple $\mathcal{A}= (A,B,C,t_{B},t_{C},\Deltart,\Deltalt)$ is a
multiplier Hopf $^*$-algebroid with counits and antipode given by
\begin{align*}
  \epslt(w) &= t^{*}(w|_{G^{0}}), &
  \epsrt(w) &= s^{*}(w|_{G^{0}}), &
  (S(w))(\gamma) &=w(\gamma^{-1}) 
\end{align*}
for all $w\in C_{c}(G)$, as one can easily check.  Note that this multiplier Hopf $^*$-algebroid is
unital if and only if the groupoid $G$ is compact.

\smallskip

The space $C_{c}(G)$ can also be regarded as a $^*$-algebra with respect to the convolution product
and involution given by
\begin{align*}
  (u\ast v)(\gamma) &= \sum_{\gamma=\gamma'\gamma''}
  u(\gamma')v(\gamma''), &
  u^{*}(\gamma) &= \overline{u(\gamma^{-1})}.
\end{align*}
Since $G$ is \'etale, $G^{0}$ is closed and open in $G$, and the function algebra $C_{c}(G^{0})$
embeds into the convolution algebra $C_{c}(G)$. Denote by $\hat A$ this convolution algebra, let
$\hat B=\hat C=C_{c}(G^{0}) \subseteq \hat A$ and let $\hat t_{\hat B}=\hat t_{\hat
  C}=\id_{C_{c}(G^{0})}$. Then the natural map $A \otimes A \to C_{c}(G\times G)$ factorizes to
isomorphisms
\begin{align*}
  {\hat A^{\hat C}} \otimes {_{\hat C}\hat A} &\to C_{c}(\GttG), &
  {\hat A_{\hat B}} \otimes {^{\hat B}\hat A} &\to C_{c}(\GssG).
\end{align*}
Define $\hat \Delta_{\hat C}\colon C_{c}(G) \to \End(C_{c}(\GttG))$ and $ \hat \Delta_{\hat B}\colon
C_{c}(G) \to \End(C_{c}(\GssG))^{\op}$ by
\begin{align*}
  (\hat \Delta_{\hat C}(u)(v\otimes w))(\gamma',\gamma'') &=
  \sum_{t(\gamma)=t(\gamma')} u(\gamma)v(\gamma^{-1}\gamma')w(\gamma^{-1}\gamma''), \\
  ((v\otimes w)\hat \Delta_{\hat B}(u))(\gamma',\gamma'') &=
  \sum_{s(\gamma)=s(\gamma')} v(\gamma'\gamma^{-1})w(\gamma''\gamma^{-1})u(\gamma).
\end{align*}
Then $\widehat{\mathcal{A}}=(\hat A,\hat B,\hat C,\hat t_{\hat B},\hat
t_{\hat C},\hat \Delta_{\hat B},\hat \Delta_{\hat C})$ is a multiplier
Hopf $^*$-algebroid, and its counits and antipode are given by
\begin{align*}
  ({_{\hat C}\hat \varepsilon}(w))(\gamma) &= \sum_{r(\gamma')=\gamma } w(\gamma'), 
&
 (\hat \varepsilon_{\hat B}(w))(\gamma)&= \sum_{s(\gamma')=\gamma }w(\gamma'),
  &
  (\hat S(w))(\gamma'')&= w( \gamma''^{-1})
\end{align*}
for all $w\in C_{c}(G)$, $\gamma\in G^{0}$ and $\gamma''\in G$,
as one can easily check.

\smallskip

If $G$ is discrete, then $C_{c}(G)$ can also be regarded as a weak multiplier Hopf algebra with respect to the pointwise multiplication or convolution product, see \new  Examples 1.15 and 1.16  in \old \cite{daele:weakmult}, and then the multiplier Hopf algebroids $\mathcal{A}$ and $\widehat{\mathcal{A}}$ coincide with the ones obtained in Theorem \ref{theorem:wmha-mhad}.

\subsection{The tensor product $C\otimes B$} \label{subsection:tensor} Let $B$ and $C$ be non-degenerate and
idempotent algebras with anti-isomorphisms $S_{B} \colon B\to C$ and $S_{C}\colon C\to B$. Then the tensor
product $A:=C\otimes B$ is non-degenerate and idempotent again.  Identify $B$ and $C$ with their images in
$M(A)$ under the canonical inclusions and define $\Deltalt \colon A \to \End(\AlA)$ and
$\Deltart \colon A \to \End(\ArA)^{\op}$ by
\begin{align*}
  \Deltalt(y \otimes x)(a \otimes a') &= ya \otimes xa', &
 (a \otimes a')\Deltart(y \otimes x) &= ay
   \otimes a'x
    \end{align*} for all  $a,a'\in A$, $x \in B$, $y\in C$.
Then $\mathcal{A}=(A,B,C,S^{-1}_{C},S_{B}^{-1},\Deltart,\Deltalt)$ is a regular
multiplier Hopf algebroid with counits and antipode given by
\begin{align*}
\epslt(y\otimes x) &= yS_{B}(x), &
\epsrt(y\otimes x) &=S_{C}(y)x, &
S(y\otimes x) &=S_{B}(x)\otimes S_{C}(y)
\end{align*}
for all $x\in B$, $y\in C$.
The verification is straightforward, for example, the diagrams \eqref{dg:antipode} commute because 
for all $a\in A$, $x\in B$, $y\in C$,
\begin{align*}
  (m_{C} \circ (S\otimes \id)\circ T_{\rho})((y\otimes x)\otimes
  a) &= S_{C}(y) xa =  \epsrt(y\otimes x) a, \\
  (m_{B} \circ (\id \otimes S) \circ \lT)(a\otimes (y\otimes x)) &=
  ayS_{B}(x) = a\epslt (y\otimes x).
\end{align*}
If there exists a regular separability idempotent in $M(B\otimes C)$, then the algebra $A$ can be
equipped with the structure of a weak multiplier Hopf algebra, see \cite{daele:weakmult2}, and again
the multiplier Hopf algebroid $\mathcal{A}$ is isomorphic to the one obtained in Theorem
\ref{theorem:wmha-mhad}.

\subsection{A two-sided crossed product} The following construction generalizes Example
2.6 in \cite{vainer}, Example 3.4.6 in \cite{boehm:hopf} and  the preceding
example, and involves  actions of regular multiplier Hopf algebras, for which we refer to \cite{daele:actions}.

Let $B$ and $C$ be non-degenerate, idempotent algebras with anti-isomorphisms $S_{B} \colon B\to C$
and $S_{C} \colon C \to B$ again, and let $H$ be a regular multiplier Hopf algebra with a unital
left action on $C$ and a unital right action on $B$ such that the following conditions hold:
\begin{enumerate}
\item $B$ and $C$ are $H$-module algebras, that is, for all $h\in H$, $x,x'\in B$, $y,y'\in C$,
  \begin{align*}
  (xx') \actleft h = (x \actleft h_{(1)})(x'
  \actleft h_{(2)}) \quad \text{and}  \quad
  h\actright (yy') = (h_{(1)} \actright y)(h_{(2)} \actright y');
  \end{align*}
\item if $S_{H}$ denotes the antipode of $H$, then  for all $x\in B,y\in C, h\in H$,
  \begin{align*} S_{B}(x \actleft h) &= S_{H}(h) \actright S_{B}(x) &&\text{and} & S_{C}(h \actright
    y) &= S_{C}(y) \actleft S_{H}(h).
  \end{align*}
\end{enumerate}
    Then the space $A=C \otimes H \otimes B$ becomes a non-degenerate, idempotent algebra with
    respect to the product
      \begin{align} \label{eq:chb} (y \otimes h \otimes x)(y' \otimes h' \otimes x') &=
y(h_{(1)} \actright y') \otimes h_{(2)}h'_{(1)} \otimes (x \actleft h'_{(2)})x',
    \end{align}
    as can be seen using similar arguments as in \cite[\S5]{daele:actions}.  The algebras $C,H,B$
    embed naturally into $M(A)$. We identify them with their images in $M(A)$, and then the products
    \begin{align*}
      yhx &= y\otimes h \otimes x,&
      yxh &= y \otimes h_{(1)} \otimes (x\actleft h_{(2)}), &
      hyx &= (h_{(1)} \actright y) \otimes h_{(2)} \otimes x
    \end{align*}
    lie in $A\subseteq M(A)$ for all $x\in B$, $y\in C$ and $h\in H$.  Define $\Deltalt \colon A
    \to \End(\AlA)$ and $\Deltart \colon A \to \End(\ArA)^{\op}$ by
    \begin{align*}
 \Deltalt(yhx)(a \otimes a') &= yh_{(1)}a \otimes h_{(2)}xa', &
(a\otimes a')\Deltart(yhx) &= ayh_{(1)} \otimes a'h_{(2)}x
    \end{align*} 
    for all $x\in B$, $y\in C$, $h\in H$, $a,a'\in A$. Note that here, the legs of $h$ are covered by $a$ or $a'$, respectively. Then $\mathcal{A}=(A,B,C,S^{-1}_{C},S_{B}^{-1},\Deltart,\Deltalt)$ is a regular multiplier Hopf algebroid with counits and antipode given by
    \begin{align*}
       \epslt(yxh) &= yS_{B}(x)\epsh(h), & \epsrt(hyx)
&=S_{C}(y)x\epsh(h), &
S(yhx) &= S_{B}(x)S_{H}(h)S_{C}(y)
    \end{align*} 
    for all $x\in B$, $y\in C$, $h\in H$.

    The verification is a bit more tedious than in the previous examples, but straightforward
    again. For example, for all $x\in B$, $y\in C$, $a\in A$,
    \begin{align*}
      (\epslt \odot\id)(\widetilde{\Tr}(yxh \otimes a)) &=
      y \varepsilon_{H}(h_{(1)}) \otimes h_{(2)}x a = y\otimes xha \mapsto yxha, \\
      (\id \odot \epsrt)(\widetilde{\lT}(a\otimes hyx)) &=
      ah_{(1)}y \otimes \varepsilon_{H}(h_{(2)})x =ahy\otimes x \mapsto ahyx, \\
      (m \circ (S \otimes \id) \circ \Tr)(hxy \otimes a) &=
      S(h_{(1)}y)h_{(2)}xa = S_{C}(y)S_{H}(h_{(1)})h_{(2)}xa =
      \epsrt(hxy)a, \\
      (m\circ (\id \otimes S) \circ \lT)(a\otimes yxh) &=
      ayh_{(1)}S(xh_{(2)}) = ayh_{(1)}S_{H}(h_{(2)})S_{B}(x) = a     \epslt(yxh).
    \end{align*}

    If there exists a regular separability idempotent in $M(B\otimes C)$ that is compatible with the actions
    of $H$ on $B$ and $C$, then the algebra $A$ can also be equipped with the structure of a weak multiplier
    Hopf algebra, see \cite{daele:weakmult2}, and again the multiplier Hopf algebroid $\mathcal{A}$ is
    isomorphic to the one obtained in Theorem \ref{theorem:wmha-mhad}.

\new \subsection*{Co-commutative, proper and \'etale multiplier Hopf algebroids}
A special class of multiplier Hopf algebroids which includes the convolution algebras of  \'etale Hausdorff groupoids was introduced in  \cite{MR1836002} under the name  \emph{\'etale Hopf algebroids}.  We show that these are precisely the co-commutative and proper multiplier Hopf algebroids. Let us use the notation introduced in the beginning of section  \ref{section:regular}.
\begin{definition}
 A multiplier bialgebroid $\mathcal{A}=(A,B,C,t_{B},t_{C},\Delta_{B},\Delta_{C})$ is \emph{co-com\-mutative} if it is equal to its co-opposite
 $\mathcal{A}^{\co} = (A,C,B,t_{B}^{-1},t_{C}^{-1},(\Delta_{B})^{\co},(\Delta_{C})^{\co})$.
\end{definition}
\begin{remarks}  \label{remarks:co-commutative}
Let $\mathcal{A}$ be a co-commutative multiplier bialgebroid as above.
  \begin{enumerate}
  \item Evidently $B=C$, and this algebra is commutative.   
  \item The maps  $t_{B}$ and $t_{C}$ are involutive in the sense that $t_{B}=t_{B}^{-1}$ and $t_{C}=t_{C}^{-1}$. 
  If $\mathcal{A}$ has counits $\epslt$ and $\epsrt$, then  $t_{B}=\id_{B}$ and $t_{C}=\id_{C}$. For example,
    \begin{align*}
      z \epslt(a)b = \epslt(za)b = \epslt(a)t_{C}^{-1}(z)b    = t_{C}^{-1}(z) \epslt(a)b
    \end{align*}
    for all $a,b\in A$ and $z \in C=B$, whence $z=t_{C}^{-1}(z)$ for all $z\in C$  by Lemma \ref{lm:counit-isit}.
  \item If $\mathcal{A}$ is a multiplier Hopf algebroid, then it is regular by Theorem \ref{tm:hopf-characterization}, and its antipode $S$ is involutive in the sense that $S^{2}=\id$ by Lemma \ref{lemma:hopf-symmetry}.
  \end{enumerate}
\end{remarks}

Recall that a groupoid $G$ is \emph{proper} if  the map $G \to G^{0}\times G^{0}$ given by $\gamma \mapsto (t(\gamma),s(\gamma))$ is proper. For a multiplier bialgebroid, we define the corresponding property as follows:
\begin{definition}
   A multiplier bialgebroid $(A,B,C,t_{B},t_{C},\Delta_{B},\Delta_{C})$ is  \emph{proper} if  $BC\subseteq A$.
\end{definition}
 Given a multiplier bialgebroid, we define
the  Takeuchi products $\scA \times \cA \subseteq \scA \otimes \cA$ and $\Ab \times \Asb \subseteq \Ab \otimes \Asb$ as in the unital case, see \eqref{eq:left-takeuchi} and \eqref{eq:right-takeuchi}, respectively, and identify these with subalgebras of $\AltkA$ and $\ArtkA$ in the natural way.
\begin{lemma} \label{lemma:proper-cocommutative}
  Let $\mathcal{A}=(A,B,C,t_{B},t_{C},\Delta_{B},\Delta_{C})$ be a proper, co-commutative bialgebroid. Then
  \begin{align*}
    B &= C \subseteq A, & \Delta_{C}(A) &\subseteq \scA \times \cA, & \Delta_{B}(A) &\subseteq \Ab \times \Asb.
  \end{align*}
If $\mathcal{A}$ has counits, then they restrict to the identity on $B=C \subseteq A$.
\end{lemma}
\begin{proof}
  Clearly, $B=C=BC \subseteq A$. We show that $\Delta_{C}(A) \subseteq \scA \times \cA$. Let $a,b\in A$ and $y\in C$.  Then $\Delta_{C}(ay)(1 \otimes b) = \Delta_{C}(a)(y\otimes 1)(1 \otimes b)$, and since $\Delta_{C}(a)(y\otimes 1) \in \Delta_{C}(A)(A\otimes 1) = \scA \otimes \cA$, we can conclude that $\Delta_{C}(ay) \in \scA \times \cA$. But $AC=A$ and hence $\Delta_{C}(A) \subseteq \scA \times \cA$. A similar argument shows that $\Delta_{B}(A) \subseteq \Ab \times \Asb$. Finally, suppose that $\epslt$ and $\epsrt$ are counits for $\mathcal{A}$. Taking $a=y \in C$ in \eqref{eq:left-counit}, we find
  \begin{align*}
    yb = (\epslt \otimes\id)(\Tr(y\otimes b)) = \epslt(y \otimes b) = \epslt(y)b
  \end{align*}
for all $b\in A$ and hence  $(\epslt)|_{C}=\id$.  A similar argument shows that $(\epsrt)|_{B}=\id$.
\end{proof}
Recall that an \emph{\'etale Hopf algebroid}  \cite{MR1836002,MR1933720}  consists of 
\begin{itemize}
\item[(E1)] a total algebra $A$ with a commutative subalgebra $A_{0} \subseteq A$ in which $A$ has local units,
\item[(E2)] a co-commutative coalgebra structure $(\Delta,\varepsilon)$ on  $A$, regarded as an $A_{0}$-module with respect to right multiplication,
\item[(E3)] a linear involution $S\colon A \to A$ 
\end{itemize}
such that
\begin{itemize}
\item[(E4)] $\varepsilon|_{A_{0}} = \id$,  and $\varepsilon(a'a) = \varepsilon(\varepsilon(a')a)$ for all $a,a' \in A$;
\item[(E5)] $\Delta(y) = y \otimes 1 = 1\otimes y$ for all $y\in A_{0}$, and $\Delta(a'a)=\Delta(a')\Delta(a)$ for all $a,a' \in A$;
\item[(E6)] $S|_{A_{0}} = \id$, and $S(a'a) = S(a)S(a')$  for all $a,a' \in A$;
\item[(E7)] if $\Delta(a) = \sum_{i} a'_{i} \otimes a''_{i}$, then $\Delta(S(a)) = \sum_{i}S(a'_{i}) \otimes S(a''_{i})$;
\item[(E8)] $(\id \otimes S)\circ \lT \circ (\id \otimes S) \circ \lT = \id$, where $\lT \colon A_{A_{0}} \otimes {_{A_{0}}A} \to A_{A_{0}} \otimes A_{A_{0}}$ is given by $a \otimes b \mapsto (a\otimes 1)\Delta(b)$.
\end{itemize}
\begin{proposition} \label{proposition:mrcun}
  Let $\mathcal{A}=(A,B,C,t_{B},t_{C},\Delta_{B},\Delta_{C})$ be a proper, co-commutative multiplier Hopf algebroid, where $A$ has local units in $B$. Denote by $\epsrt$ and $S$ its right counit and its antipode, respectively. Then $(A,B,\Delta_{B},\epsrt,S)$ is an \'etale Hopf algebroid. Conversely, every \'etale Hopf algebroid arises this way.
\end{proposition}
\begin{proof}
  We first show that  $(A,B,\Delta_{B},\epsrt,S)$ is an \'etale Hopf algebroid.
  Lemma \ref{lemma:proper-cocommutative} implies $\Delta_{B}(A) \subseteq \Ab \times \Asb \subseteq \Ab \otimes \Asb= \Ab \otimes \Ab$. Clearly, $(\Delta_{B},\epsrt)$ forms a co-commutative coalgebra structure on $\Ab$ satisfying (E5). Assumption (E4)  holds by Lemma \ref{lemma:proper-cocommutative} and (\ref{eq:right-counit-multiplicative}),  (E3) and (E6)  by  Remarks \ref{remarks:co-commutative}, (E7) by Proposition \ref{prop:antipode-comult} and (E8) by Theorem \ref{theorem:hopf-algebroid}.

  Conversely, let $(A,A_{0},\Delta,\varepsilon,S)$ be an \'etale Hopf algebroid.  Then (E1), (E2) and (E5) imply that $(A,A_{0},\id_{A_{0}},\Delta)$ is a right multiplier bialgebroid, and (E3) and (E6) imply that with $\Delta':=(S \otimes S) \circ \Delta \circ S^{-1}$, the tuple $(A,A_{0},\id_{A_{0}},\Delta')$ is a left multiplier bialgebroid. Now, $\mathcal{ A}=(A,A_{0},A_{0},\id_{A_{0}},\id_{A_{0}},\Delta,\Delta')$ satisfies the mixed co-associativity conditions by (E7) and therefore is a multiplier bialgebroid. It is proper by (E1), co-commutative by (E2) and (E7), and its canonical map $\lT$ is invertible by (E8).  By co-commutativity and definition of $\Delta'$, the other three canonical maps of $\mathcal{A}$ are invertible as well.
Finally, (E1), (E4) and Theorem \ref{tm:hopf-characterization} imply that $\mathcal{A}$ is a multiplier Hopf algebroid.
\end{proof}

\section*{Acknowledgements}
We would like to thank the referee for careful reading and for very valuable comments, in particular, on non-regular multiplier Hopf algebroids and on \'etale Hopf algebroids.

\old

\def\cprime{$'$}

\end{document}